\documentclass[final,3p,times,12pt]{elsarticle}

\usepackage{amsmath,amssymb,mathtools,amsthm}  
\usepackage{graphicx}                           
\usepackage{caption,subcaption}                
\usepackage{enumitem}                           
\usepackage{xcolor}                             
\usepackage{mathrsfs}    
\usepackage{multirow}%
\usepackage{array}
\usepackage{hhline}    
\usepackage{hyperref}             
\usepackage{ulem}
\usepackage{float}

\definecolor{brown}{rgb}{0.6,0.3,0}   
\definecolor{blue}{rgb}{0,0,1}

\newtheorem{lemma}{Lemma}[section]
\newtheorem{theorem}{Theorem}[section]
\newtheorem{proposition}{Proposition}[section]
\newtheorem{corollary}{Corollary}[section]
\newtheorem{remark}{Remark}[section]
\newtheorem{example}{Example}[section]

\numberwithin{equation}{section}  

\newcommand{\beq}{\begin{equation}}
	\newcommand{\eeq}{\end{equation}}

\usepackage[numbers]{natbib}
\usepackage{booktabs}

\journal{ }
\begin{document}
	\begin{frontmatter}
		\title{Two-point completion of zero interlacing and Wronskian-type bounds, with applications to Jacobi and other orthogonal polynomials}

		\author[label1]{Kerstin Jordaan}\ead{jordakh@unisa.ac.za}
		\author[label2]{Vikash Kumar}{}
		\ead{kumarv@unisa.ac.za}
		\ead{vikaskr0006@gmail.com}
		
		\cortext[cor1]{Corresponding author}
		
		\address[label1]{College of Economic and Management Sciences, University of South Africa, Pretoria, 0003, South Africa}
		\address[label2]{Department of Decision Sciences, University of South Africa, Pretoria, 0003, South Africa}
		
		\begin{abstract}
		 We investigate completed interlacing of zeros for pairs of polynomial sequences that fail to interlace by exactly two points. Using suitable mixed recurrence relations, we identify two additional points, called the completion points, required to obtain completed interlacing and establish general results for polynomials with real zeros. We also show that completed interlacing yields Wronskian-type bounds for the original polynomial pairs. The completed-interlacing results are applied to Jacobi, Meixner-Pollaczek, and Pseudo-Jacobi polynomials. In the Jacobi case, we improve earlier results by determining two explicit additional points that complete the interlacing of $P_n^{(\alpha,\beta)}$ and $P_{n+1}^{(\alpha+1,\beta+1)}$. We also analyze the locations of these points and show that the common-zero cases are non-generic in the Jacobi parameter domain. For this doubly-shifted Jacobi pair, the Wronskian-type bounds yield cross-family Turán-type inequalities and lead to an associated Stieltjes representation and complete-monotonicity consequences. For Meixner-Pollaczek polynomials, we address an open question concerning consecutive degrees with the parameter increased by one, and we obtain analogous completed interlacing results for Pseudo-Jacobi polynomials. 
		\end{abstract}
		
		\begin{keyword}
		 Completed interlacing  \sep Jacobi polynomials \sep Wronskian bound \sep Tur\'an inequality  \sep Meixner-Pollaczek polynomials \sep Pseudo-Jacobi polynomials \sep Completely monotone function
			\MSC[2020] Primary 33C45 \sep 42C05; Secondary 26D07
		\end{keyword}
		
		\end{frontmatter}

	\section{Introduction}

Let $\mu$ be a finite positive Borel measure on the real line $\mathbb{R}$ whose support, denoted by $\operatorname{supp}(\mu)$, contains infinitely many points, and let $\mathbb{N}_0=\{0,1,2,\dots\}$. Define
   \[\mathcal{I}_\mu:=\left\{n\in\mathbb{N}_0 :\int_{\operatorname{supp}(\mu)} x^{2n}\,d\mu(x)<\infty\right\}.\] Since $\mu$ is finite, $\mathcal{I}_\mu$ is either $\mathbb{N}_0$ or a  set of the form $\{0,1,\dots,N\}$. For each $n\in \mathcal{I_\mu}$, let $p_n(x) = x^n + \dots$ be a monic polynomial of exact degree $n$. We say that $\{p_n(x)\}_{n \in \mathcal{I_\mu}}$ forms an orthogonal polynomial sequence with respect to $\mu$ if
	\begin{align}
		\int_{supp(\mu)} p_n(x)p_m(x) d\mu(x) = \kappa_n \delta_{n,m}, \quad \text{for all } n, m \in \mathcal{I_\mu},
	\end{align}
	where $\kappa_n > 0$.  For instance, the index set $\mathcal{I}_{\mu}$ for Jacobi and Meixner-Pollaczek polynomials is $\mathbb{N}\cup \{0\}$, whereas for Pseudo-Jacobi polynomials, the index set is finite \cite{Koekoek-Book-HypergeometricOP}. 
    
    The well-known properties of the orthogonality relation are that for each $n \in \mathcal{I}_{\mu}$, the zeros of orthogonal polynomials $p_n(x)$ are all real, distinct, and simple. These zeros lie in the interior of the convex hull of the support of the measure $\mu$ \cite{Ismail-Book-OP-2009}. The orthogonality also ensures that the zeros of consecutive degree polynomials satisfy a specific pattern known as the interlacing property (or separation theorem) \cite{Chiharabook}.
    
	The zeros of any two polynomials $P_n$ and $Q_{n-1}$ interlace, denoted $P_n \prec Q_{n-1}$, if
	\begin{align}
		z_{1,n} < x_{1,n-1} < z_{2,n} < x_{2,n-1} < \cdots < x_{n-1,n-1} < z_{n,n},
	\end{align}
	where $\{z_{k,n}\}_{k=1}^n$ and $\{x_{k,n-1}\}_{k=1}^{n-1}$ denote the zeros of $P_n$ and $Q_{n-1}$  respectively in ascending order, while we say that the zeros of $P_n$ alternate the zeros of $Q_{n}$, denoted $P_n \prec Q_n$, if
	\begin{align}
		z_{1,n} < x_{1,n} < z_{2,n} <x_{2,n}< \cdots < x_{n-1,n} < z_{n,n} < x_{n,n}.
	\end{align}
	
	The computational relevance of such zero configurations comes in part from Gaussian quadrature, since the zeros of orthogonal polynomials are the quadrature nodes, and they are commonly computed through the eigenvalues of the associated Jacobi matrix (cf. \cite{GolubWelsch1969,Gautschi2004Book}). Interlacing is therefore useful in the analysis and comparison of quadrature nodes arising from related polynomial families (cf. \cite{DriverJordaanMbuyi2009}). 
	
	The interlacing property of two orthogonal polynomials, $p_n$ and $p_m$, from the same family (where $m - n > 1$) was first studied by Stieltjes. He showed  (cf. \cite[page 46]{Szego-OP-Book}) that for $m > n + 1$, at least one zero of $p_m$ lies between any two consecutive zeros of $p_n$. This result is commonly referred to as Stieltjes interlacing.  This was further extended  in \cite{Beardon-Extension-Stieljtes-CMFT-2011,Boor-Saff-StieltjesExtension-LAA-1986} by identifying a polynomial $d_{m-n-1}$ of degree $m-n-1$ that helps to complete the interlacing property as $d_{m-n-1} p_n \prec p_m$, provided that $p_n$ and $p_m$ do not have a common zero.  The polynomials $d_{m-n-1}$ were initially introduced as dual polynomials by de Boor and Saff \cite{Boor-Saff-StieltjesExtension-LAA-1986} and subsequently identified as the associated polynomials of  $p_m$ in the work of Vinet and Zhedanov \cite{Vinet-Zhedanov-JCAM-2004}. Beyond the case of a single sequence of orthogonal polynomials, a polynomial $d_k$ of degree $k$ can also complete the interlacing of zeros between two different systems of orthogonal polynomials. For instance, in \cite{Driver-Jordaan-IndagMath-2011} a first-degree polynomial $d_{1,t}$ was identified such that $d_{1,t} L_{n-1}^{(\alpha+t)} \prec L_{n+1}^{(\alpha)}$ for $t \in \{0, 1, 2, 3, 4\}$, where $L_n^{(\alpha)}$ denotes the Laguerre polynomial of degree $n$ with parameter $\alpha > -1$. Similarly, \cite{Driver-NumerMath-2012} identified a second-degree polynomial $d_{2,t}$ such that $d_{2,t} C_{n-2}^{(\lambda+t)} \prec C_{n+1}^{(\lambda)}$ for $t \in \{0, 1, 2, 3\}$, where $C_n^{(\lambda)}$ represents the ultraspherical (Gegenbauer) polynomial of degree $n$ and parameter $\lambda > -1/2$. 
	
	It is possible to relax the constraint of orthogonality to identify a polynomial $d_l$ of degree $l$ that completes the interlacing of zeros between two arbitrary polynomials from different sequences, (cf.  \cite{Driver-Jordaan-JAT-2012,Jooste-Jordaan-Numeralg-2025,Jordaan-Kumar-onepoint-2026}).   
	
	Let $P_i$, $G_j$, and $Q_k$ denote polynomials of degrees $i$, $j$, and $k$, respectively, whose zeros are real and lie in an interval $(a,b)$. Assume that $ Q_k\prec G_j $ whenever $k=j+1$ or $G_j \prec Q_k$ whenever $k=j-1$. These polynomials satisfy the general mixed recurrence relation
	\begin{align}\label{MoreGeneralMixedTTRR}
		A(x)P_i(x)=B(x)G_j(x)+d_l(x)Q_k(x),
	\end{align}
	where $A(x)$, $B(x)$, and $d_l(x)$ are continuous functions on $(a,b)$.  
	
	In \cite[Theorem 2.1]{Jooste-Jordaan-Numeralg-2025}, the mixed recurrence relation \eqref{MoreGeneralMixedTTRR} is applied with $i=n, j=n, k=n+1$, and $d_1(x)=x-E$, to complete the interlacing of zeros of  $P_n(x)$ and $G_n(x)$. The result is then applied to Meixner-Pollaczek, Pseudo-Jacobi and continuous Hahn polynomials. Extending this framework, \cite{Jordaan-Kumar-onepoint-2026} develops various general results for completing interlacing by identifying the degree-one polynomials. These general results are applied to several orthogonal families (Laguerre, Jacobi, Krawtchouk and Meixner polynomials), as well as a class of non-orthogonal polynomials (Narayana polynomials), to establish new completed interlacing results using degree-one polynomials.
	
	Motivated by these results, a natural question arises regarding completed interlacing of two distinct polynomial sequences that fail to fully interlace by exactly two points. Specifically, we aim to explicitly identify two additional points, called the completion points, which are given by the zeros of a quadratic factor and are necessary for completing the interlacing. 
	
To achieve this, we study the general framework based on mixed recurrence relations with $i=n$, $j=n+1$, $k=n$, and $d_2(x)=\mp(x-E_1)(x-E_2)$. These relations isolate a quadratic factor whose zeros $E_1$ and $E_2$ are the completion points.  In the Jacobi case, the exact location of these points is determined. 	We also obtain an exact criterion for common zeros of
			$P_n^{(\alpha,\beta)}$ and $P_{n+1}^{(\alpha+1,\beta+1)}$.  We further show
			that, for each fixed degree, the set of parameters giving rise to common zeros
			of this shifted Jacobi pair is negligible both topologically and
			measure-theoretically.  Thus common zeros are non-generic in the parameter
			domain.

		The general completed interlacing result also yields two  Wronskian-type bounds for the original pair of polynomials.  When specialized to the Jacobi case, these Wronskian-type bounds take the form of
			new cross-family Tur\'an-type inequalities for the doubly shifted pair
			$P_n^{(\alpha,\beta)}$ and $P_{n+1}^{(\alpha+1,\beta+1)}$.  These inequalities
			are in the spirit of classical Tur\'an-type inequalities and their
			parameter-shifted variants
			\cite{ BustozJMMA1981,Gasper1972-PAMS-Turaninequality-Jacobipolynomials, Szego1948-Turaninequality};
			however, the inequalities obtained here are derived from the two-point completed interlacing mechanism. The associated quotient involving Jacobi polynomials also leads to a Stieltjes-transform representation and complete monotonicity.

        The general mixed recurrence relation approach described previously also enables us to refine existing results and provide partial answers to open
		problems in the literature. First, in the Jacobi case, we improve upon
		 \cite[Theorem~2]{Arvesu-Driver-Littlejohn-RamanujanJ-2023} by establishing the completed interlacing of
		$P_n^{(\alpha,\beta)}$ and $P_{n+1}^{(\alpha+1,\beta+1)}$ using two explicitly
		determined completion points. Second, for Meixner-Pollaczek polynomials, we address the open question posed in
		\cite{Jooste-Jordaan-Numeralg-2025} by identifying
		conditions under which the zeros of $P_n^{(\lambda)}(x;\phi)$ and
		$P_{n+1}^{(\lambda+1)}(x;\phi)$ interlace.  Finally, we establish analogous
		completed interlacing results for Pseudo-Jacobi polynomials
		$P_n(x;a,b)$ and $P_{n+1}(x;a+1,b)$.
	
	The manuscript is organized as follows: In Section \ref{General interlacing Results}, we prove general results for completing the interlacing of arbitrary polynomials $P_n$ and $G_{n+1}$ by identifying the zeros of a quadratic factor $d_2$ as the two completion points.  Section~\ref{CI and Wronskian-Turan} uses completed interlacing to derive general Wronskian-type bounds. Section \ref{sec:Jacobianalysis and Inequalities} specializes these results to Jacobi polynomials: Section \ref{Jacobi polynomials} establishes completed-interlacing results, analyzes the location of the completion points and characterizes the common-zero problem, while Sections \ref{Turan inequalities} and \ref{Stieltjes} derive cross-family Tur\'an-type inequalities, associated Stieltjes representations, and complete monotonicity properties. Finally, Sections \ref{Meixner-Pollaczek polynomials} and \ref{Pseudo Jacobi polynomials} present family-specific specializations for Meixner-Pollaczek and Pseudo-Jacobi polynomials, respectively.

\section{General two-point completed interlacing results} \label{General interlacing Results}

We first establish a parity-counting lemma that will be used in the proofs of both main results on two-point interlacing completion. Let \[ y_{1,n+1}<y_{2,n+1}<\cdots<y_{n+1,n+1} \] be the zeros of $G_{n+1}$, and define the intervals 
\[ I_0=(-\infty,y_{1,n+1}),\qquad I_k=(y_{k,n+1},y_{k+1,n+1}),\quad 1\leq k\leq n, \quad\text{ and }\quad I_{n+1}=(y_{n+1,n+1},\infty). \] Let \[ F_{n+2}(x)=(x-E_1)(x-E_2)P_n(x). \] 

\begin{lemma}\label{lem:parity-counting} Let $P_n$, $G_{n+1}$, and $Q_n$ be monic polynomials of degrees $n$, $n+1$, and $n$, respectively, whose zeros are real and simple. Assume that $G_{n+1}\prec Q_n$ and that, for all real $x$, \begin{equation} A(x)P_n(x) = B(x)G_{n+1}(x) + \varepsilon (x-E_1)(x-E_2)Q_n(x), \qquad \varepsilon\in\{-1,1\}, \label{eq:general-signed-recurrence} \end{equation} where $A(x)>0$, $E_1,E_2\in \mathbb{R}$, $E_1<E_2$. Suppose further that $P_n$ and $G_{n+1}$ are coprime and that $B(E_i)\neq 0$, $i=1,2.$ Then $F_{n+2}$ has an odd number of zeros in every interior interval $I_k$, $k=1,\dots,n$. Moreover, the number of zeros of $F_{n+2}$ in each exterior interval $I_0$ and $I_{n+1}$ is odd when $\varepsilon=-1$ and even when $\varepsilon =1$.\end{lemma}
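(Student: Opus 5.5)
Evaluating \eqref{eq:general-signed-recurrence} at the zeros $y_{k}:=y_{k,n+1}$ of $G_{n+1}$ gives
\[
A(y_k)P_n(y_k)=\varepsilon\,(y_k-E_1)(y_k-E_2)\,Q_n(y_k),
\]
and multiplying by $P_n(y_k)$ yields
\[
A(y_k)\,P_n(y_k)^2=\varepsilon\,F_{n+2}(y_k)\,Q_n(y_k).
\]
So everything reduces to the sign of $F_{n+2}(y_k)Q_n(y_k)$. Since $P_n$ and $G_{n+1}$ are coprime, $P_n(y_k)\neq 0$, and because $A>0$ the left side is strictly positive. Hence $\operatorname{sign}F_{n+2}(y_k)=\varepsilon\,\operatorname{sign}Q_n(y_k)$ for every $k=1,\dots,n+1$. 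In particular $F_{n+2}(y_k)\neq0$, so no zero of $F_{n+2}$ sits at an endpoint of the intervals $I_k$, and $Q_n(y_k)\neq0$ as well. The hypothesis $B(E_i)\ne0$ is not needed here; it plays a role elsewhere, in ensuring $E_i$ is not a zero of $G_{n+1}$ when $P_n(E_i)=0$.

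The key input is $G_{n+1}\prec Q_n$. Exactly one zero of $Q_n$ lies strictly between consecutive zeros $y_k<y_{k+1}$, and it is simple, so $Q_n$ changes sign across each $I_k$: $\operatorname{sign}Q_n(y_k)=-\operatorname{sign}Q_n(y_{k+1})$, equivalently $\operatorname{sign}Q_n(y_k)=(-1)^{n+1-k}$ since $Q_n$ is monic. Therefore $F_{n+2}(y_k)$ and $F_{n+2}(y_{k+1})$ have opposite signs for $1\le k\le n$. Counting zeros with multiplicity, a polynomial nonzero at both endpoints of an interval has an odd number of zeros there exactly when its endpoint values have opposite signs. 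This proves the interior claim.

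For the exterior intervals I compare with the behaviour at $\pm\infty$. $F_{n+2}$ is monic of degree $n+2$, so its sign at $+\infty$ is $+$ and at $-\infty$ is $(-1)^{n+2}=(-1)^n$. All zeros of $Q_n$ lie in $(y_1,y_{n+1})$, so $Q_n(y_{n+1})>0$ and $\operatorname{sign}Q_n(y_1)=(-1)^n$. Hence $\operatorname{sign}F_{n+2}(y_{n+1})=\varepsilon$ and $\operatorname{sign}F_{n+2}(y_1)=\varepsilon(-1)^n$.
\begin{itemize}
\item When $\varepsilon=-1$, the value at the endpoint has sign opposite to the sign at the corresponding infinity, so the number of zeros in $I_{n+1}$ and in $I_0$ is odd.
\item When $\varepsilon=1$, the signs agree, so the count is even.
\end{itemize}

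The only delicate points are bookkeeping. First, "number of zeros" must be read with multiplicity: $E_1$ or $E_2$ could coincide with a zero of $P_n$ or with each other. Second, the sign of $Q_n$ at $y_1$ and $y_{n+1}$ must be pinned down from interlacing and monicity. I expect no real obstacle beyond this parity argument.
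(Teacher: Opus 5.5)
Your proof is correct and follows the paper's argument. You relate $\operatorname{sign}F_{n+2}(y_{k,n+1})$ to $\operatorname{sign}\bigl(\varepsilon Q_n(y_{k,n+1})\bigr)$ and then count parity using $G_{n+1}\prec Q_n$ and the signs at $\pm\infty$. The one difference is that you multiply by $P_n(y_{k,n+1})$ rather than by $(y_{k,n+1}-E_1)(y_{k,n+1}-E_2)$. This gives positivity directly from coprimality and, when zeros are counted with multiplicity, removes the need for $B(E_i)\neq0$.

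Your side remarks are slightly off. Under the stated hypotheses $E_1<E_2$ is assumed, so $E_1$ and $E_2$ cannot coincide. The actual role of $B(E_i)\neq0$, together with coprimality, is to force $P_n(E_i)\neq0$, not what you describe. Hence all zeros of $F_{n+2}$ are simple and the multiplicity caveat is moot, as the paper shows.
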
 

\begin{proof} Since $P_n$ and $G_{n+1}$ are coprime, $A(E_i)>0$, and $B(E_i)\neq0$, it follows from \eqref{eq:general-signed-recurrence} that neither $E_1$ nor $E_2$ is a zero of $P_n$ or $G_{n+1}$. Indeed, if $P_n(E_i)$=0, then \eqref{eq:general-signed-recurrence} and $B(E_i)\neq 0$ imply $G_{n+1}(E_i)=0$, which contradicts the coprimality assumption. Conversely, if $G_{n+1}(E_i)=0$, then $A(E_i)>0$ implies $P_{n}(E_i)=0$, which is again the same contradiction. Therefore, none of the zeros $y_{j,n+1}$, $j=1,\dots,n+1$ of $G_{n+1}$ is a zero of $F_{n+2}$, and the zeros $E_1$ and $E_2$ are distinct from the zeros of $P_n$. Let $y_{j,n+1}$ be any zero of $G_{n+1}$. Evaluating \eqref{eq:general-signed-recurrence} at $y_{j,n+1}$ gives
\[ A(y_{j,n+1})  P_n(y_{j,n+1}) = \varepsilon(y_{j,n+1}-E_1)(y_{j,n+1}-E_2)Q_n(y_{j,n+1}).\] Multiplying by $(y_{j,n+1}-E_1)(y_{j,n+1}-E_2)$ yields \begin{align} \label{eq:endpoint-product-parity}F_{n+2}(y_{j,n+1})=\varepsilon \frac{(y_{j,n+1}-E_1)^2(y_{j,n+1}-E_2)^2}{A(y_{j,n+1})}Q_n(y_{j,n+1}).\end{align} The coefficient of $\varepsilon Q_n(y_{j,n+1})$ is strictly positive, hence \begin{align}\label{eq:parity}\operatorname{sign}\!\left(F_{n+2}(y_{j,n+1})\right)=\operatorname{sign}\!\left(\varepsilon Q_n(y_{j,n+1})\right).\end{align} Now fix $k\in\{1,\ldots,n\}$. Since the zeros of $G_{n+1}$ and $Q_n$ strictly interlace, $Q_n$ has exactly one zero in $I_k$ and hence $Q_n(y_{k,n+1})$ and $Q_n(y_{k+1,n+1})$ have opposite signs. It follows from \eqref{eq:parity} that \[\operatorname{sign}\!\left(F_{n+2}(y_{k,n+1})\right)=-\operatorname{sign}\!\left(F_{n+2}(y_{k+1,n+1})\right).\] Since $P_n$ has real and simple zeros, $E_1<E_2$, and neither $E_i$ is a zero of $P_n$, all zeros of $F_{n+2}$ are real and simple. Hence, by the above equation, $F_{n+2}$ has an odd number of zeros in $I_k$.

\medskip \noindent It remains to consider the two exterior intervals. Because $Q_n$ has no zero in $I_0$, the value of $Q_n(y_{1,n+1})$ has the same sign as $Q_n(x)$ for sufficiently large negative $x$. Moreover, $Q_n$ and $F_{n+2}$ are monic and their degrees differ by two. They therefore have the same sign for sufficiently large negative $x$. By \eqref{eq:endpoint-product-parity}, $F_{n+2}(y_{1,n+1})$ has the same sign as the limiting sign of $F_{n+2}$ at the left-hand end when $\varepsilon =1$, and the opposite sign when $\varepsilon = - 1$. Thus $F_{n+2}$ has an even number of zeros in $I_0$ when $\varepsilon =1$, and an odd number when $\varepsilon = -1$.

\medskip \noindent
Similarly, $Q_n$ has no zero in
\[
I_{n+1}=(y_{n+1,n+1},\infty),
\]
so $Q_n(y_{n+1,n+1})$ has the same sign as $Q_n(x)$ for sufficiently large positive $x$. Since $Q_n$ and $F_{n+2}$ are monic, both are positive for sufficiently large positive $x$. It follows from \eqref{eq:parity} that $F_{n+2}(y_{n+1,n+1})$ has the same sign as $F_{n+2}(x)$ for sufficiently large positive $x$ when $\varepsilon=1$, and the opposite sign when $\varepsilon=-1$. Hence $F_{n+2}$ has an even number of zeros in $I_{n+1}$ when $\varepsilon=1$, and an odd number of zeros in $I_{n+1}$ when $\varepsilon=-1$. This completes the proof.
\end{proof}


\begin{theorem}\label{MainTheorem3} Let $P_n$, $G_{n+1}$, and $Q_n$ be monic polynomials of degrees $n$, $n+1$, and $n$, respectively, with real and simple zeros. Assume that $G_{n+1}\prec Q_n$ and that, for all real $x$, \begin{equation} A(x)P_n(x) = B(x)G_{n+1}(x) - (x-E_1)(x-E_2)Q_n(x), \qquad n\geq1, \label{GeneralMixedTTRR3} \end{equation} where $E_1,E_2\in\mathbb{R}$, $E_1<E_2$, and $A(x)>0$. Suppose also that $B(E_i)\neq0$, $ i=1,2$. If $P_n$ and $G_{n+1}$ are coprime, then \[ (x-E_1)(x-E_2)P_n(x)\prec G_{n+1}(x). \] If, in addition, both $E_1$ and $E_2$ lie outside the interval determined by the extreme zeros of $G_{n+1}$, then necessarily \[ E_1<y_{1,n+1}, \qquad E_2>y_{n+1,n+1}, \] and \[ G_{n+1}(x)\prec P_n(x). \] If $P_n$ and $G_{n+1}$ are not coprime, then every common zero belongs to $\{E_1,E_2\}$. In particular, $P_n$ and $G_{n+1}$ can have at most two common zeros. \end{theorem}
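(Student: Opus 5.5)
The plan is to apply Lemma~\ref{lem:parity-counting} with $\varepsilon=-1$ and then turn the parity statements into exact counts. Write $F_{n+2}(x)=(x-E_1)(x-E_2)P_n(x)$. The lemma gives an odd number of zeros of $F_{n+2}$ in each of the $n+2$ intervals $I_0,I_1,\dots,I_{n+1}$, and none of these zeros lies at a zero of $G_{n+1}$. Each interval therefore contains at least one zero of $F_{n+2}$. Since $F_{n+2}$ has exactly $n+2$ zeros, counted with multiplicity, each interval contains exactly one, and these zeros are simple because the lemma's proof showed they are all real and distinct. Exactly one zero of $F_{n+2}$ in each $I_k$, $k=0,\dots,n+1$, is precisely the alternation $F_{n+2}\prec G_{n+1}$ in the sense of the paper's definition for degrees $n+2$ and $n+1$. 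This proves the first claim.

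For the second claim, suppose $E_1$ and $E_2$ both lie outside $[y_{1,n+1},y_{n+1,n+1}]$. Each of them must then lie in $I_0$ or in $I_{n+1}$. Each exterior interval contains exactly one zero of $F_{n+2}$, so the two cannot share an interval. Since $E_1<E_2$, this forces $E_1\in I_0$ and $E_2\in I_{n+1}$. The remaining $n$ zeros of $F_{n+2}$, namely the zeros of $P_n$, then occupy the interior intervals $I_1,\dots,I_n$, exactly one in each. That is $G_{n+1}\prec P_n$.

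For the non-coprime case, let $c$ be a common zero of $P_n$ and $G_{n+1}$. Evaluating \eqref{GeneralMixedTTRR3} at $c$ gives $(c-E_1)(c-E_2)Q_n(c)=0$. Because $G_{n+1}\prec Q_n$ is strict interlacing, $G_{n+1}$ and $Q_n$ have no common zero, so $Q_n(c)\neq0$. Hence $c\in\{E_1,E_2\}$. The zeros of $G_{n+1}$ are simple, so there are at most two common zeros.

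I expect the main obstacle to be bookkeeping rather than substance: checking that the hypotheses of Lemma~\ref{lem:parity-counting} are met, in particular that \eqref{GeneralMixedTTRR3} is its case $\varepsilon=-1$ and that $B(E_i)\neq0$ is assumed. It also needs stating explicitly that the counting $n+2$ intervals against $n+2$ zeros is tight, so that no interval can hold three zeros.
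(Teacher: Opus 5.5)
Your proposal is correct and follows the paper's proof essentially step for step. You apply Lemma~\ref{lem:parity-counting} with $\varepsilon=-1$ and count the $n+2$ zeros against the $n+2$ intervals to get $(x-E_1)(x-E_2)P_n(x)\prec G_{n+1}(x)$; the exterior-interval argument then forces $E_1\in I_0$, $E_2\in I_{n+1}$ and gives $G_{n+1}\prec P_n$; and evaluating \eqref{GeneralMixedTTRR3} at a common zero, using that $G_{n+1}$ and $Q_n$ share no zero, confines common zeros to $\{E_1,E_2\}$.
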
 

\begin{proof}
We first prove the assertion concerning common zeros. Suppose that $x_0$ is a
common zero of $P_n$ and $G_{n+1}$. Substituting $x=x_0$ into \eqref{GeneralMixedTTRR3}
gives
\[
(x_0-E_1)(x_0-E_2)Q_n(x_0)=0.
\]
Since $G_{n+1}\prec Q_n$, the polynomials $G_{n+1}$ and $Q_n$ have no
common zero. Hence $Q_n(x_0)\neq 0$, and therefore $x_0\in\{E_1,E_2\}$.
Thus
\[
Z(P_n)\cap Z(G_{n+1})\subseteq\{E_1,E_2\},
\]
where $Z(p)$ denotes the zero set of the polynomial $p$. Therefore, $P_n$ and $G_{n+1}$ can have at most two common zeros. Now assume that $P_n$ and $G_{n+1}$ are coprime, and set $F_{n+2}(x)=(x-E_1)(x-E_2)P_n(x).$
Applying Lemma~\ref{lem:parity-counting} with $\varepsilon=-1$, we find that $F_{n+2}$ has an
odd number of zeros in each of the intervals $I_0,I_1,\ldots,I_n,I_{n+1}.$
In particular, each of these $n+2$ disjoint intervals contains at least one
zero of $F_{n+2}$. Since $F_{n+2}$ has degree $n+2$, it must have exactly
one zero in each interval. Consequently,
\[
(x-E_1)(x-E_2)P_n(x)\prec G_{n+1}(x).
\] Suppose, in addition, that both $E_1$ and $E_2$ lie outside the interval
determined by the extreme zeros of $G_{n+1}$. Since each exterior interval
contains exactly one zero of $F_{n+2}$, the two completion points cannot lie
in the same exterior interval. Because $E_1<E_2$, it follows that
\[
E_1<y_{1,n+1},
\qquad
E_2>y_{n+1,n+1}.
\]
The unique zero of $F_{n+2}$ in $I_0$ is therefore $E_1$, and the unique
zero in $I_{n+1}$ is $E_2$. Hence $P_n$ has no zero in either exterior
interval. Moreover, neither completion point lies in an interior interval, so
the unique zero of $F_{n+2}$ in each $I_k$, $1\leq k\leq n$, is a zero
of $P_n$. Hence $P_n$ has exactly one zero in each interval
\[
(y_{k,n+1},y_{k+1,n+1}), \qquad k=1,\ldots,n,
\]
and no zero in either exterior interval. Therefore,  $G_{n+1}(x)\prec P_n(x).$
\end{proof}
\begin{remark}\label{Conclusion-Proof-MainTheorem3}
The proof of Theorem~\ref{MainTheorem3} also shows that $E_1$ and $E_2$ cannot lie
in the same one of the intervals
\[
I_0,I_1,\ldots,I_n,I_{n+1}.
\]
Although the two-point factor gives the completed interlacing
conclusion, the proof also identifies the following configurations in
which one completion point alone suffices:
		\begin{enumerate}[label=\alph*)]
			\item 
			If $E_1<y_{1,n+1}$ and $y_{k,n+1}<E_2<y_{k+1,n+1}$ for some $k\in\{1,\dots,n\}$, then $G_{n+1}(x)\prec (x-E_2)P_n(x)$.
			\item 
			If $E_2>y_{n+1,n+1}$ and $y_{k,n+1}<E_1<y_{k+1,n+1}$ for some $k\in\{1,\dots,n\}$, then $(x-E_1)P_n(x)\prec G_{n+1}(x)$.
		\end{enumerate}

\end{remark}
 
\begin{theorem}\label{MainTheorem4} Assume that all the hypotheses of Theorem \ref{MainTheorem3} hold, except that \eqref{GeneralMixedTTRR3} is replaced by \begin{equation} A(x)P_n(x) = B(x)G_{n+1}(x) + (x-E_1)(x-E_2)Q_n(x). \label{GeneralMixedTTRR4} \end{equation} Assume that $P_n$ and $G_{n+1}$ are coprime. Then the following statements hold. \begin{enumerate} \item[(a)] The configuration $ E_1<y_{1,n+1}$ and $E_2>y_{n+1,n+1} $ is impossible. \item[(b)] If $E_1$ and $E_2$ lie in the same one of the intervals $I_0,I_1,\ldots,I_n,I_{n+1}$, then $G_{n+1}(x)\prec P_n(x)$. \item[(c)] If $ E_1<y_{1,n+1} $ and $ y_{k,n+1}<E_2<y_{k+1,n+1} $ for some $k\in\{1,\ldots,n\}$, then $(x-E_2)P_n(x)\prec G_{n+1}(x).$ \item[(d)] If $E_2>y_{n+1,n+1}$ and $y_{k,n+1}<E_1<y_{k+1,n+1}$ for some $k\in\{1,\ldots,n\}$, then \newline $G_{n+1}(x)\prec (x-E_1)P_n(x)$. 
\item[(e)] Let $z_{1,n}<z_{2,n}<\cdots<z_{n,n}$ be the zeros of $P_n$.   Suppose that $E_1$ and $E_2$ lie in two distinct interior intervals \[ y_{k,n+1}<E_1<y_{k+1,n+1}, \qquad y_{k',n+1}<E_2<y_{k'+1,n+1}, \qquad k\neq k'. \] Then, for $l\in\{1,\ldots,n-1\}$, \begin{itemize}\item[(i)]    $(x-E_1)G_{n+1}(x)\prec (x-E_2)P_n(x)$  whenever $
				y_{k,n+1} < z_{l,n} < E_1 < z_{l+1,n} < y_{k+1,n+1}
				$, 
				\item[(ii)] $(x-E_2)G_{n+1}(x)\prec (x-E_1)P_n(x)$  whenever $
				y_{k',n+1} < z_{l,n} < E_2< z_{l+1,n} < y_{k'+1,n+1}$. 
			\end{itemize}\end{enumerate} \end{theorem}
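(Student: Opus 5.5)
The whole argument runs through Lemma~\ref{lem:parity-counting} with $\varepsilon=1$. In that case $F_{n+2}=(x-E_1)(x-E_2)P_n$ has an odd number of zeros in each interior interval $I_1,\dots,I_n$ and an even number in each of $I_0$ and $I_{n+1}$. The $n$ interior intervals already force at least $n$ zeros, and $F_{n+2}$ has degree $n+2$. So the only possible distributions of the zeros are:
\begin{enumerate}[label=(\roman*)]
\item one zero in each interior interval and two in a single exterior interval;
\item three zeros in one interior interval, one in every other interior interval, and none in either exterior interval.
\end{enumerate}
Each part of the theorem then follows by placing $E_1,E_2$ in this picture and reading off where the $n$ zeros of $P_n$ must lie.

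For (a): if $E_1\in I_0$ and $E_2\in I_{n+1}$, then both exterior intervals contain a zero of $F_{n+2}$. Evenness then forces at least two zeros in each, so at least $n+4>n+2$ zeros in total, a contradiction.

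For (b): if $E_1,E_2$ lie in the same $I_0$ or $I_{n+1}$, we are in case (i) with both exterior zeros equal to $E_1,E_2$. Hence $P_n$ has exactly one zero in each $I_k$, $1\le k\le n$, and none outside, which is $G_{n+1}\prec P_n$. If instead both lie in the same interior $I_k$, that interval contains at least two zeros; oddness makes it three, so we are in case (ii). Then $P_n$ has one zero in every interior interval and none in the exterior ones, again giving $G_{n+1}\prec P_n$.

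For (c): $E_1\in I_0$ makes $I_0$ have two zeros, so we are in case (i). The second zero in $I_0$ must be a zero $z$ of $P_n$, since $E_2\in I_k$ is interior. Each interior interval holds exactly one zero of $F_{n+2}$; in $I_k$ that zero is $E_2$, and elsewhere it is a zero of $P_n$. So $(x-E_2)P_n$ has one zero in $I_0$ and one in each interior interval, which is the alternation $(x-E_2)P_n\prec G_{n+1}$ (both have degree $n+1$, and a zero of $(x-E_2)P_n$ comes first). Part (d) is the mirror image. Since $E_2\in I_{n+1}$, case (i) puts a zero of $P_n$ in $I_{n+1}$, and we get $G_{n+1}\prec(x-E_1)P_n$.

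For (e): the intervals $I_k$ and $I_{k'}$ each contain a completion point, and there are at most $n+2$ zeros. Only one interval can have three zeros, and only in case (ii); in case (i) every interior interval has exactly one zero. I will treat the two hypotheses of (e) separately.
\begin{itemize}
\item Under hypothesis (i), $I_k$ contains $z_{l,n},E_1,z_{l+1,n}$, so it is the triple interval and we are in case (ii). Then $I_{k'}$ contains only $E_2$, every other interior interval contains exactly one zero of $P_n$, and the exterior intervals contain none.
\item Inserting $E_1$ into the zero list of $G_{n+1}$ splits $I_k$ into $(y_{k,n+1},E_1)$ and $(E_1,y_{k+1,n+1})$. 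These contain $z_{l,n}$ and $z_{l+1,n}$ respectively.
\item Inserting $E_2$ into the zero list of $P_n$ fills the gap in $I_{k'}$.
\end{itemize}
So the $n+2$ zeros of $(x-E_1)G_{n+1}$ and the $n+1$ zeros of $(x-E_2)P_n$ strictly interlace, with the former at both ends. Hypothesis (ii) of (e) is symmetric, with the roles of $E_1$ and $E_2$ swapped.

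The main obstacle is the bookkeeping in (e): checking the ordering carefully, including whether $k<k'$ or $k>k'$ matters. I expect it does not, since the argument uses only the local occupancy of each interval. I also need to confirm that all zeros are simple and distinct from the $y_{j,n+1}$, which the lemma guarantees.
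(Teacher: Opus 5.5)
Your proposal is correct and follows the paper's proof essentially step for step. The paper also applies Lemma~\ref{lem:parity-counting} with $\varepsilon=1$ and observes that the two surplus zeros of $F_{n+2}$ must lie together in a single ``exceptional interval'' (your cases (i) and (ii)), then reads off (a)--(e) exactly as you do; your concern in (e) about $k>k'$ does not arise, since $E_1<E_2$ forces $k<k'$.
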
 

\begin{proof}
Set
\[
F_{n+2}(x)=(x-E_1)(x-E_2)P_n(x).
\]
Applying Lemma~\ref{lem:parity-counting} with $\varepsilon=1$, we find that $F_{n+2}$
has an odd number of zeros in each interior interval
\[
I_1,\ldots,I_n
\]
and an even number of zeros in each exterior interval $I_0$ and
$I_{n+1}$. In particular, every interior interval contains at least one zero of
$F_{n+2}$. These account for at least $n$ zeros. Since
$F_{n+2}$ has degree $n+2$, there are only two further zeros.
Moreover, adding zeros to any one interval without changing the parity
prescribed by Lemma~\ref{lem:parity-counting} requires adding them in pairs. Therefore, the
two further zeros must lie in the same interval. We call this interval
the exceptional interval. Thus, either one exterior interval contains
exactly two zeros, or one interior interval contains exactly three
zeros. 

\medskip \noindent Suppose first that
\[
E_1<y_{1,n+1}
\qquad\text{and}\qquad
E_2>y_{n+1,n+1}.
\]
Then each exterior interval contains one of the completion points.
Since the number of zeros in each exterior interval must be even, each
would have to contain at least two zeros. This would require at least
four zeros in the exterior intervals, in addition to at least one zero
in each of the $n$ interior intervals, contradicting
$\deg F_{n+2}=n+2$. This proves part~(a).

\medskip \noindent Suppose next that $E_1$ and $E_2$ lie in the same interval. If they
lie in an exterior interval, that interval already contains two zeros
of $F_{n+2}$, so it is the exceptional interval. If they lie in an
interior interval, that interval contains at least the two zeros
$E_1$ and $E_2$; since its number of zeros must be odd, it contains
at least three zeros and is again the exceptional interval. In
either case, after removing the two completion points, $P_n$ has
exactly one zero in every interior interval and no zero in either
exterior interval. Hence $G_{n+1}(x)\prec P_n(x),$ which proves part~(b).

\medskip \noindent Now suppose that
\[
E_1\in I_0
\qquad\text{and}\qquad
E_2\in I_k
\]
for some $k\in\{1,\ldots,n\}$. Since $I_0$ contains $E_1$ and
must contain an even number of zeros of $F_{n+2}$, it contains at
least one further zero. Thus $I_0$ is the exceptional interval.
It follows that $I_0$ contains exactly two zeros, every interior
interval contains exactly one zero, and $I_{n+1}$ contains none. The unique zero of $F_{n+2}$ in $I_k$ is $E_2$, so $P_n$ has no
zero in $I_k$. The other zero of $F_{n+2}$ in $I_0$, besides
$E_1$, is a zero of $P_n$, and every interior interval other than
$I_k$ contains exactly one zero of $P_n$. Therefore
$(x-E_2)P_n(x)$ has exactly one zero in $I_0$ and in every interior
interval, and no zero in $I_{n+1}$. Hence
\[
(x-E_2)P_n(x)\prec G_{n+1}(x),
\]
which proves part~(c).

\medskip \noindent The proof of part~(d) is analogous. If
\[
E_1\in I_k
\qquad\text{and}\qquad
E_2\in I_{n+1},
\]
then $I_{n+1}$ is the exceptional interval. Thus
$(x-E_1)P_n(x)$ has no zero in $I_0$, exactly one zero in every
interior interval, and exactly one zero in $I_{n+1}$. Consequently,
\[
G_{n+1}(x)\prec (x-E_1)P_n(x).
\]

\medskip \noindent Finally, suppose that
\[
E_1\in I_k,
\qquad
E_2\in I_{k'},
\qquad
k\neq k'.
\] Each of the intervals $I_k$ and $I_{k'}$ already contains one zero
of $F_{n+2}$, namely $E_1$ and $E_2$, respectively. Every other
interior interval contains at least one zero of $P_n$. Hence
$n-2$ zeros of $P_n$ are forced, one in each of the remaining
$n-2$ interior intervals. The two remaining zeros of $P_n$ must
occur together in the exceptional interval. If these two zeros lie in $I_k$ and satisfy
\[
y_{k,n+1}<z_{l,n}<E_1<z_{l+1,n}<y_{k+1,n+1},
\]
then the zero $E_1$ divides $I_k$ into two intervals, each
containing exactly one zero of $P_n$. Every other interval between
consecutive zeros of $(x-E_1)G_{n+1}$ contains exactly one zero of
$(x-E_2)P_n$; in $I_{k'}$, that zero is $E_2$. Therefore, $(x-E_1)G_{n+1}(x)\prec (x-E_2)P_n(x)$, which proves part~(e)(i).

\medskip \noindent Similarly, if the two remaining zeros of $P_n$ lie in $I_{k'}$ and
satisfy
\[
y_{k',n+1}<z_{l,n}<E_2<z_{l+1,n}<y_{k'+1,n+1},
\]
then $(x-E_2)G_{n+1}(x)\prec (x-E_1)P_n(x)$. This proves part~(e)(ii) and completes the proof.
\end{proof}

 \section{Wronskian-type bounds from completed interlacing} \label{CI and Wronskian-Turan}
 
The purpose of this section is to derive analytic inequalities from the completed
interlacing obtained in Theorem~\ref{MainTheorem3}.  The role of interlacing
in two-parameter Tur\'an-type inequalities has previously been considered in
the ultraspherical and Laguerre polynomial setting~\cite{BustozJMMA1981,BustozIsmail1983SIAM}. In the present setting, the relevant
cross-family pair does not, in general, interlace before completion. The
two-point completed interlacing established in
Theorem~\ref{MainTheorem3} restores the required zero structure through the
augmented polynomial $	F_{n+2}(x)=(x-E_1)(x-E_2)P_n(x).$ 
To translate this discrete geometric property into a continuous analytic
statement, we consider the Wronskian of $G_{n+1}$ and $F_{n+2}$. For differentiable functions $f$ and $g$, we define their Wronskian by $W(f,g)(x):=f(x)g'(x)-f'(x)g(x).$
The following lemma determines the Wronskian's sign, which will be helpful in deriving the cross-family inequalities.
\begin{lemma}\label{lem:Wronskiangeneral}
	Suppose the conditions of Theorem \ref{MainTheorem3} hold and, in addition, the polynomials $P_n$ and $
	G_{n+1}$ are coprime. Let \[F_{n+2}(x)=(x-E_1)(x-E_2)P_n(x).\] Then, for any $x\in \mathbb{R}$ and $n\geq 1$,
	\begin{equation}\label{Wronskian_Neg_twopt}
		W(G_{n+1},F_{n+2} )(x)>0.
	\end{equation}
\end{lemma}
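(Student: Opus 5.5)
The plan is to read the sign of the Wronskian off a partial-fraction expansion of $G_{n+1}/F_{n+2}$. The only input needed is the completed interlacing $F_{n+2}\prec G_{n+1}$ supplied by Theorem~\ref{MainTheorem3}, which applies because $P_n$ and $G_{n+1}$ are coprime.

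\textbf{Step 1: the zero configuration.} Theorem~\ref{MainTheorem3} shows that $F_{n+2}$ has exactly one zero in each of the intervals $I_0,\dots,I_{n+1}$. Denote these zeros by $f_1<f_2<\cdots<f_{n+2}$. They are real and simple, and they strictly interlace with the zeros of $G_{n+1}$:
\[
f_1<y_{1,n+1}<f_2<\cdots<y_{n+1,n+1}<f_{n+2}.
\]
In particular, $F_{n+2}$ and $G_{n+1}$ have no common zero.

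\textbf{Step 2: partial fractions.} Since $\deg G_{n+1}<\deg F_{n+2}$ and $F_{n+2}$ has simple zeros,
\[
\frac{G_{n+1}(x)}{F_{n+2}(x)}=\sum_{i=1}^{n+2}\frac{b_i}{x-f_i},\qquad b_i=\frac{G_{n+1}(f_i)}{F_{n+2}'(f_i)} .
\]
Away from the zeros of $F_{n+2}$ we have
\[
W(G_{n+1},F_{n+2})=G_{n+1}F_{n+2}'-G_{n+1}'F_{n+2}=-F_{n+2}^2\Big(\frac{G_{n+1}}{F_{n+2}}\Big)',
\]
and therefore
\[
W(G_{n+1},F_{n+2})(x)=F_{n+2}(x)^2\sum_{i=1}^{n+2}\frac{b_i}{(x-f_i)^2}.
\]
So it suffices to show that every $b_i$ is positive.

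\textbf{Step 3: signs of the residues.} Both polynomials are monic with simple real zeros, which gives the following sign counts.
\begin{itemize}
\item $F_{n+2}$ has exactly $n+2-i$ zeros to the right of $f_i$, so $\operatorname{sign}F_{n+2}'(f_i)=(-1)^{n+2-i}$.
\item By the interlacing in Step 1, exactly $n+2-i$ zeros of $G_{n+1}$ (namely $y_{i,n+1},\dots,y_{n+1,n+1}$) exceed $f_i$, so $\operatorname{sign}G_{n+1}(f_i)=(-1)^{n+2-i}$.
\end{itemize}
Hence $b_i>0$ for every $i$. Consequently $W(G_{n+1},F_{n+2})(x)>0$ for every real $x\notin\{f_1,\dots,f_{n+2}\}$, since there $F_{n+2}(x)^2>0$ and the sum is positive.

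\textbf{Step 4: the zeros of $F_{n+2}$.} This is the only point needing care, because the displayed formula degenerates there. I would handle it by direct evaluation:
\[
W(G_{n+1},F_{n+2})(f_i)=G_{n+1}(f_i)F_{n+2}'(f_i).
\]
This is positive by the same sign count as in Step 3; alternatively it equals $b_iF_{n+2}'(f_i)^2>0$. This gives \eqref{Wronskian_Neg_twopt} for all real $x$.

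The main obstacle is therefore not analytic but bookkeeping: the signs of $G_{n+1}(f_i)$ and $F_{n+2}'(f_i)$ must be matched correctly. This matching relies on the exact pattern $F_{n+2}\prec G_{n+1}$, with one zero of $F_{n+2}$ in each exterior interval, rather than the reverse alternation.
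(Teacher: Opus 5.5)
Your proposal is correct and follows essentially the same route as the paper. Both arguments expand $G_{n+1}/F_{n+2}$ in partial fractions and use the completed interlacing of Theorem~\ref{MainTheorem3} to show that every residue is positive, via the matching signs $(-1)^{n+2-j}$ of $G_{n+1}(z_j)$ and $F_{n+2}'(z_j)$. They then write $W(G_{n+1},F_{n+2})=F_{n+2}^2\sum_j R_j/(x-z_j)^2$ away from the zeros, and at the zeros they evaluate directly, $W(G_{n+1},F_{n+2})(z_j)=G_{n+1}(z_j)F_{n+2}'(z_j)>0$.
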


\begin{proof} Since the positions of $E_1$ and $E_2$ are not fixed, we do not retain the notation of zeros of $P_n$ as mentioned in Theorem \ref{MainTheorem4}. For simplicity, we denote the zeros of $F_{n+2}$ by $z_1<z_2,\dots<z_{n+2}$. By Theorem \ref{MainTheorem3}, the $n+2$ distinct real zeros of $F_{n+2}$ strictly interlace the zeros of $G_{n+1}$.
	For $x\in\mathbb{R}\setminus \{z_1,z_2,\dots,z_{n+2}\}$, we define the rational function 
	\begin{equation*}
		h(x):=\frac{G_{n+1}(x)}{F_{n+2}(x)}.
	\end{equation*}
		Since $F_{n+2}$ is monic with simple zeros and
	$\deg G_{n+1}<\deg F_{n+2}$, $h$ has the partial fraction decomposition 
	\begin{equation}\label{lmeq1:pfd}
		h(x)=\sum_{j=1}^{n+2}\frac{R_j}{x-z_j},
		\qquad
		R_j=\frac{G_{n+1}(z_j)}{F_{n+2}'(z_j)},
	\end{equation}
    The interlacing pattern shows that $G_{n+1}(z_j)$ and $F'_{n+2}(z_j)$ have the same sign. Indeed \[\operatorname{sign}[G_{n+1}(z_j)]=\operatorname{sign}[F_{n+2}'(z_j)]=(-1)^{n+2-j}.\]  Hence  
	\begin{equation*}
		R_j>0,\qquad j=1,\ldots,n+2.
	\end{equation*}
	Differentiating the partial-fraction expansion \eqref{lmeq1:pfd} with respect to $x$ gives
	\begin{equation}\label{lemeq2:hprime-pfd}
		h'(x)=-\sum_{j=1}^{n+2}\frac{R_j}{(x-z_j)^2}<0.
	\end{equation} On the other hand,
	\begin{equation}\label{lemeq0:hprime}
		h'(x)=\frac{G'_{n+1}(x)F_{n+2}(x)-G_{n+1}(x)F'_{n+2}(x)}{F_{n+2}^2(x)}
		=-\frac{	W(G_{n+1},F_{n+2})(x)}{F^2_{n+2}(x)}.
	\end{equation}	By \eqref{lemeq2:hprime-pfd} and \eqref{lemeq0:hprime}, \[W(G_{n+1},F_{n+2})(x)=F^2_{n+2}(x)\,\sum_{j=1}^{n+2}\frac{R_j}{(x-z_j)^2} >0\]
	whenever $x\ne z_j$. 	At each root $x = z_j$, the Wronskian simplifies to \[W(G_{n+1},F_{n+2})(z_j) = F_{n+2}'(z_j)G_{n+1}(z_j)>0\] because the two factors have the same nonzero sign. 
	Hence $W(G_{n+1},F_{n+2})(x)>0$ on $\mathbb{R}$.
\end{proof}
\begin{remark} If the coprimality assumption in Lemma \ref{lem:Wronskiangeneral} is relaxed, then, by Theorem~\ref{MainTheorem3}, every common zero of $P_n$ and $G_{n+1}$ belongs to $\{E_1,E_2\}$. If $x_0$ is such a common zero, then  $	W(G_{n+1},F_{n+2})(x_0)=0$.  \end{remark}

Motivated by classical Tur\'{a}n inequalities, define the
generalized cross-family differential Tur\'{a}nian
\[
\mathcal{T}_n(x):=
(2x-E_1-E_2)P_n(x)G_{n+1}(x)
-(x-E_1)(x-E_2)W(P_n,G_{n+1})(x)
-G_{n+1}^2(x).
\]
Next, we derive Wronskian-type bounds from completed interlacing. The argument requires neither orthogonality nor any special structure of the coefficient functions $A$ and $B$ in the mixed recurrence relation \eqref{GeneralMixedTTRR3}.

\begin{theorem}\label{thm:general}
	Assume that the hypotheses of Theorem \ref{MainTheorem3} hold and that $P_n$ and $
	G_{n+1}$ are coprime. Then \[\mathcal{T}_n(x)>0, \qquad x\in\mathbb{R}.\]
Equivalently, the weighted Wronskian satisfies the Wronskian-type bounds
	\begin{equation}
	\label{eq:general-turan-sharp}
	(x-E_1)(x-E_2)W(P_n,G_{n+1})(x)
	<
	(2x-E_1-E_2)P_n(x)G_{n+1}(x)-G_{n+1}(x)^2,
\end{equation}
	 and 
		\begin{equation}\label{eq:general-turan}
		(x-E_1)(x-E_2)W(P_n,G_{n+1})(x)
		\;<\;(2x-E_1-E_2)P_n(x)G_{n+1}(x) 
	\end{equation}
 for every $x\in\mathbb{R}$. Here, \eqref{eq:general-turan-sharp} is the refined bound and
\eqref{eq:general-turan} is its weaker form.
\end{theorem}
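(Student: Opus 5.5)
The plan is to reduce $\mathcal{T}_n$ to the Wronskian $W(G_{n+1},F_{n+2})$ from Lemma~\ref{lem:Wronskiangeneral}, with $F_{n+2}=(x-E_1)(x-E_2)P_n$. Then I will show that this Wronskian actually exceeds $G_{n+1}^2$, using the partial fraction representation together with a Cauchy--Schwarz (variance) argument.

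\emph{Step 1 (algebraic identity).} Write $q(x)=(x-E_1)(x-E_2)$, so $q'(x)=2x-E_1-E_2$. Expanding $F_{n+2}'=q'P_n+qP_n'$ gives
\[
W(G_{n+1},F_{n+2}) = G_{n+1}(q'P_n+qP_n')-G_{n+1}'qP_n = q'P_nG_{n+1}-q\,W(P_n,G_{n+1}).
\]
Hence
\[
\mathcal{T}_n(x)=W(G_{n+1},F_{n+2})(x)-G_{n+1}(x)^2 .
\]
So \eqref{eq:general-turan-sharp} is exactly $\mathcal{T}_n>0$. Moreover, \eqref{eq:general-turan} is precisely $W(G_{n+1},F_{n+2})>0$, which is Lemma~\ref{lem:Wronskiangeneral}; it also follows from the sharp form by dropping $-G_{n+1}^2\le 0$.

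\emph{Step 2 (variance representation).} By Theorem~\ref{MainTheorem3}, $F_{n+2}$ has simple zeros $z_1<\dots<z_{n+2}$ interlacing those of $G_{n+1}$. Reuse the expansion $h=G_{n+1}/F_{n+2}=\sum_j R_j/(x-z_j)$ with $R_j>0$ from the proof of Lemma~\ref{lem:Wronskiangeneral}. Since both polynomials are monic and their degrees differ by one, $xh(x)\to1$ as $x\to\infty$, so $\sum_j R_j=1$. Then, off the zeros,
\[
\frac{\mathcal{T}_n}{F_{n+2}^2}=\sum_j\frac{R_j}{(x-z_j)^2}-\Bigl(\sum_j\frac{R_j}{x-z_j}\Bigr)^2 .
\]
This is the variance of the values $1/(x-z_j)$ under the probability weights $R_j$. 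The Lagrange identity rewrites it as $\sum_{i<j}R_iR_j\bigl(\tfrac{1}{x-z_i}-\tfrac{1}{x-z_j}\bigr)^2$. Clearing denominators yields the polynomial identity
\[
\mathcal{T}_n(x)=\sum_{1\le i<j\le n+2}R_iR_j(z_i-z_j)^2\prod_{k\ne i,j}(x-z_k)^2 .
\]
It holds off the $z_j$, and hence identically, since both sides are polynomials.

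\emph{Step 3 (strict positivity everywhere).} For $x\notin\{z_k\}$, every term in the sum is positive. At $x=z_m$, only the pairs containing $m$ survive. Since $n+2\ge3$, there is at least one such pair $(m,j)$ with $j\ne m$, and its term $R_mR_j(z_m-z_j)^2\prod_{k\ne m,j}(z_m-z_k)^2$ is strictly positive because the zeros are distinct. Hence $\mathcal{T}_n>0$ on all of $\mathbb{R}$.

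The main obstacle I anticipate is exactly this behavior at the zeros of $F_{n+2}$. A direct Cauchy--Schwarz inequality only gives $\ge$ after a continuity passage, and evaluating $\mathcal{T}_n(z_m)=G_{n+1}(z_m)\bigl(F_{n+2}'(z_m)-G_{n+1}(z_m)\bigr)$ directly is awkward. The explicit sum-of-squares polynomial identity resolves this, and it only requires checking the normalization $\sum R_j=1$ and the positivity $R_j>0$ already established.
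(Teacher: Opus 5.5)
Your proposal is correct and follows essentially the same route as the paper's proof. Both arguments rewrite $\mathcal{T}_n=W(G_{n+1},F_{n+2})-G_{n+1}^2$ and use $\sum_j R_j=1$ together with the Lagrange (variance) identity to obtain the explicit sum-of-squares polynomial representation of $\mathcal{T}_n$. Both then verify strict positivity at the zeros $z_m$ using the pairs that contain $m$.
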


\begin{proof}
Multiplying \eqref{lmeq1:pfd} by $x$ and letting $x\rightarrow \infty$, we obtain $\sum_{j=1}^{n+2}R_j=1$ since $G_{n+1}$ and $F_{n+2}$ are monic and their degrees differ by one. Indeed, \[1=\lim_{x\to\infty}x\frac{G_{n+1}(x)}{F_{n+2}(x)}= \lim_{x\to \infty}\sum_{j=1}^{n+2}R_j\frac{x}{x-z_j}=\sum_{j=1}^{n+2}R_j.\] If $x\in\mathbb{R}\setminus\{z_1,\cdots,z_{n+2}\}$, then  by using \eqref{lmeq1:pfd} and \eqref{lemeq2:hprime-pfd},  we can write
	\begin{align*}
		-h'(x)-h(x)^2
		=
		\sum_{j=1}^{n+2}\frac{R_j}{(x-z_j)^2}
		-
		\left(
		\sum_{j=1}^{n+2}\frac{R_j}{x-z_j}
		\right)^2                                             
		=
		\sum_{1\leq j<k\leq n+2}
		R_jR_k
		\left(
		\frac{1}{x-z_j}-\frac{1}{x-z_k}
		\right)^2.
	\end{align*}
	By~\eqref{lemeq0:hprime} and the definition of $h$, this identity becomes
	\begin{align*}
		W(G_{n+1},F_{n+2})(x)	-
		G_{n+1}(x)^2                                    
		=
		F_{n+2}(x)^2
		\sum_{1\leq j<k\leq n+2}
		R_jR_k
		\frac{(z_j-z_k)^2}
		{(x-z_j)^2(x-z_k)^2}.
	\end{align*}
	Since $F_{n+2}(x)=(x-E_1)(x-E_2)P_n(x)$ is monic with zeros $z_1,\ldots,z_{n+2}$, we have
	\begin{equation}\label{eq:exact-positive-remainder-general}
		\begin{aligned}
			W(G_{n+1},F_{n+2})(x)	-G_{n+1}(x)^2                                    
			=
			\sum_{1\leq j<k\leq n+2}
			R_jR_k(z_j-z_k)^2
			\left(
			\prod_{\substack{\ell=1\\ \ell\neq j,k}}^{n+2}
			(x-z_{\ell})
			\right)^2.
		\end{aligned}
	\end{equation}
	Both sides of ~\eqref{eq:exact-positive-remainder-general} are polynomials, so the identity extends to every
\(x\in\mathbb{R}\). Moreover, the right-hand side of~\eqref{eq:exact-positive-remainder-general}
	is strictly positive for every real $x$. Indeed, if $x\notin\{z_1,\ldots,z_{n+2}\}$, then every summand is
strictly positive. If $x=z_m$, then every summand whose index pair contains $m$, for example the pair $(m,k)$ with
$k\neq m$, is strictly positive. Hence
	\begin{equation}\label{eq:Wronskian-square-improvement}
		W(G_{n+1},F_{n+2})(x)
		>
		G_{n+1}(x)^2,
		\qquad x\in\mathbb{R}.
	\end{equation} Now,
\[W(G_{n+1},F_{n+2})(x)=(2x-E_1-E_2)P_n(x)G_{n+1}(x) -(x-E_1)(x-E_2)W(P_n,G_{n+1})(x).\]
Substitution into \eqref{eq:Wronskian-square-improvement} and rearrangement yield \eqref{eq:general-turan-sharp}.
Finally, \eqref{eq:general-turan} follows from \eqref{eq:general-turan-sharp} because \(G_{n+1}^2(x)\ge0\). 
\end{proof}

\begin{remark} \begin{itemize} \item[]
\item[(i)]The weaker bound \eqref{eq:general-turan} follows directly from Lemma~\ref{lem:Wronskiangeneral}, without the refinement used to prove \eqref{eq:general-turan-sharp}. Indeed, differentiating $F_{n+2}(x)=(x-E_1)(x-E_2)P_n(x)$, we get
	$F_{n+2}'(x)=(2x-E_1-E_2)P_n(x)+(x-E_1)(x-E_2)P'_n(x)$. Now,  using \eqref{lem:Wronskiangeneral}
	\begin{align}
		\nonumber			0&<W(G_{n+1},F_{n+2})(x)=(2x-E_1-E_2)P_n(x)G_{n+1}(x) -(x-E_1)(x-E_2)W(P_n,G_{n+1})(x)
	\end{align}
	Rearranging the above equation yields \eqref{eq:general-turan}.	
\item[(ii)] In Section~\ref{Turan inequalities}, inequalities \eqref{eq:general-turan-sharp} and \eqref{eq:general-turan}
will be specialized to the doubly shifted Jacobi pair
\(P_n^{(\alpha,\beta)}\) and
\(P_{n+1}^{(\alpha+1,\beta+1)}\), yielding a sharpened cross-family Turán-type inequality and its simpler form.  
\end{itemize}
\end{remark}

The Wronskian-type bounds obtained in Theorem \ref{thm:general} hold  for any two polynomials whose zeros interlace after adjoining two completion points. We now consider the additional structure that arises when the polynomial $Q_n$ in the mixed recurrence relation belongs to the same orthogonal sequence as $G_{n+1}$. 	In this setting, differentiating the mixed recurrence gives an identity that relates the cross-family Wronskian $W(P_n,G_{n+1})$ to the same-family Wronskian $W(G_n,G_{n+1})$. Orthogonality then determines the sign of the latter through the Christoffel–Darboux formula and consequently yields two-sided bounds for the cross-family Wronskian.
\begin{proposition}\label{lem:generaldecomposition}	Suppose $\{G_k\}$ is orthogonal with respect to a positive measure supported on 
	$(a,b)$. Assume that the hypotheses of Theorem \ref{MainTheorem3}  
    hold with $Q_n=G_n$, $A$ and $B$ differentiable on $\mathbb{R}$ and $P_n$ and $G_{n+1}$ coprime. Define \[R(x):=A'(x)P_n(x)G_{n+1}(x)-B'(x)G_{n+1}(x)^2+(2x-E_1-E_2)G_n(x)G_{n+1}(x).\]
    Then the following statements hold:
    \begin{itemize}
\item[(a)]For every $x\in\mathbb{R}$,
	\begin{equation}\label{eq1:generaldecomposition}
		A(x) W(P_n,G_{n+1})(x)=R(x)-(x-E_1)(x-E_2)W(G_n,G_{n+1})(x)
	\end{equation}
\item[(b)] For  $x\in\mathbb{R}$ with $x\ne E_1,E_2$
	\begin{equation}\label{eq:two-sided}
		A(x)W(P_n,G_{n+1})(x)\;
		\begin{cases}
			> R(x) & x\in(E_1,E_2),\\
			<R(x) & x\in(-\infty,E_1)\cup (E_2,\infty).
		\end{cases}
	\end{equation}
	 Equality holds if and only if $x\in\{E_1,E_2\}$.
\item[(c)] For all $x\in \mathbb{R}$
	\begin{equation}\label{eq:pos-bound}
		A(x)(2x-E_1-E_2)P_n(x)G_{n+1}(x)
		>(x-E_1)(x-E_2)R(x)-(x-E_1)^2(x-E_2)^2W(G_n,G_{n+1})(x).\end{equation}
	\end{itemize}
\end{proposition}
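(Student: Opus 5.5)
The plan is to obtain (a) by differentiating the mixed recurrence \eqref{GeneralMixedTTRR3} with $Q_n=G_n$. Part (b) then follows from the sign of the same-family Wronskian given by Christoffel–Darboux, and part (c) follows from (a) together with Lemma~\ref{lem:Wronskiangeneral}. Throughout, write $q(x)=(x-E_1)(x-E_2)$, so that $q'(x)=2x-E_1-E_2$ and the recurrence reads $AP_n=BG_{n+1}-qG_n$.

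For (a), differentiate the recurrence:
\[
A'P_n+AP_n'=B'G_{n+1}+BG_{n+1}'-q'G_n-qG_n'.
\]
Next expand $A\,W(P_n,G_{n+1})=(AP_n)G_{n+1}'-(AP_n')G_{n+1}$. In the first term, replace $AP_n$ by $BG_{n+1}-qG_n$. In the second term, replace $AP_n'$ using the differentiated identity. The two terms $\pm BG_{n+1}G_{n+1}'$ cancel. What remains is
\[
A'P_nG_{n+1}-B'G_{n+1}^2+q'G_nG_{n+1}-q\,(G_nG_{n+1}'-G_n'G_{n+1}).
\]
This is exactly $R(x)-q(x)W(G_n,G_{n+1})(x)$. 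The identity is a routine polynomial/differentiable computation valid for every real $x$.

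For (b), invoke the Christoffel–Darboux formula for the monic orthogonal sequence $\{G_k\}$:
\[
W(G_n,G_{n+1})(x)=\kappa_n\sum_{k=0}^{n}\frac{G_k(x)^2}{\kappa_k}.
\]
Since $G_0\equiv1$, this is strictly positive for every real $x$. Then (a) gives $A\,W(P_n,G_{n+1})-R=-q\,W(G_n,G_{n+1})$. Because $W(G_n,G_{n+1})>0$, the sign of this difference is the sign of $-q$. That sign is positive on $(E_1,E_2)$, negative outside $[E_1,E_2]$, and the difference vanishes exactly when $q=0$, i.e. at $E_1,E_2$. This gives \eqref{eq:two-sided} and the equality statement.

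For (c), multiply (a) by $q$. The right-hand side of \eqref{eq:pos-bound} is then $q\,A\,W(P_n,G_{n+1})$, so the claimed inequality is equivalent to
\[
A(x)\bigl[q'(x)P_n(x)G_{n+1}(x)-q(x)W(P_n,G_{n+1})(x)\bigr]>0.
\]
The bracket is precisely $W(G_{n+1},F_{n+2})(x)$ with $F_{n+2}=qP_n$, as computed in Remark (i) after Theorem~\ref{thm:general}. Lemma~\ref{lem:Wronskiangeneral} (which uses coprimality and Theorem~\ref{MainTheorem3}) gives that this bracket is positive on $\mathbb{R}$. Combined with $A>0$, this yields \eqref{eq:pos-bound} for all $x$.

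The only step needing care is the bookkeeping in (a). In particular, one must check that all $B$-terms other than $B'G_{n+1}^2$ cancel and that the sign in front of $qW(G_n,G_{n+1})$ is negative. Everything else follows immediately from earlier results.
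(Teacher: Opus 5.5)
Your proposal is correct and follows the paper's proof essentially step for step. Part (a) comes from differentiating the recurrence with $Q_n=G_n$ and substituting into $A\,W(P_n,G_{n+1})$, part (b) from the Christoffel--Darboux positivity of $W(G_n,G_{n+1})$, and part (c) from combining (a) with $A>0$ and the positivity of $W(G_{n+1},F_{n+2})$ in Lemma~\ref{lem:Wronskiangeneral}; invoking that lemma directly, rather than multiplying \eqref{eq:general-turan} by $A$ as the paper does, is only a cosmetic difference, since \eqref{eq:general-turan} is that positivity rewritten.
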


\begin{proof} Since the mixed recurrence \eqref{GeneralMixedTTRR3} holds with $Q_n=G_n$, we have 
	\begin{align}\label{GeneralMixedTTRR3:QnequalsGn}
		A(x)\,P_{n}(x)=B(x)\,G_{n+1}(x)-(x-E_1)(x-E_2)\,G_{n}(x), 
	\end{align}
	Differentiating \eqref{GeneralMixedTTRR3:QnequalsGn} gives
	\begin{equation}\label{eq:diff-id}
		A'(x)P_n(x)+A(x)P_n'(x)=B'(x)\,G_{n+1}(x)+B(x)G_{n+1}'(x)-(2x-E_1-E_2)G_n(x)-(x-E_1)(x-E_2)G_n'(x).
	\end{equation}
	Now we obtain \eqref{eq1:generaldecomposition} by substituting  \eqref{GeneralMixedTTRR3:QnequalsGn} into the first term and
	\eqref{eq:diff-id} into the second term on the right of the expression \[A(x)\,W(P_n,G_{n+1})(x)=A(x)\,P_n(x)\cdot G_{n+1}'(x)-A(x)\,P_n'(x)\cdot G_{n+1}(x).\] This proves part (a).

\medskip \noindent Since $\{G_k\}$ is orthogonal with respect to a positive measure, the Christoffel-Darboux formula guarantees the strict positivity of the classical Wronskian $W(G_n,G_{n+1})(x) > 0$ for $x\in \mathbb{R}$. It follows from part (a) that 
\[
		A(x) W(P_n,G_{n+1})(x)-R(x)=-(x-E_1)(x-E_2)W(G_n,G_{n+1})(x).
	\] The right-hand side is positive when $x\in(E_1,E_2)$, negative when $x\in(-\infty,E_1)\cup (E_2,\infty)$ and zero precisely when $x=E_1$ and $x=E_2$. This proves part (b).

\medskip \noindent To prove~\eqref{eq:pos-bound}, multiply~\eqref{eq:general-turan} by $A(x)>0$ to obtain
	\begin{align*}
		A(x)(x-E_1)(x-E_2)W(P_n,G_{n+1})(x) < A(x)(2x-E_1-E_2)P_n(x)G_{n+1}(x).
	\end{align*}
	Substitute the expression of $A(x)W(P_n,G_{n+1})(x)$ derived in \eqref{eq1:generaldecomposition}  into the left-hand side to obtain
	\begin{align*}
		(x-E_1)(x-E_2)\bigl[R(x) - (x-E_1)(x-E_2)W(G_n,G_{n+1})(x)\bigr] < A(x)(2x-E_1-E_2)P_n(x)G_{n+1}(x).
	\end{align*}
	Rearranging completes the proof of (c). 

\end{proof}

	 \section{Jacobi polynomials} \label{sec:Jacobianalysis and Inequalities}
	
	\subsection{Two-point completion of zero interlacing between different Jacobi families}\label{Jacobi polynomials}
	
	The monic Jacobi polynomials, denoted by $P_n^{(\alpha, \beta)}(x)$, form an orthogonal sequence on $(-1, 1)$ with respect to the measure $d\mu(x) = (1 - x)^\alpha (1 + x)^\beta \, dx$, where  $\alpha, \beta > -1$ \cite{Chiharabook}. They satisfy the three-term recurrence relation
	\begin{align}\label{TTRR-Monic-Jacobi}
		P_{n+1}^{(\alpha, \beta)}(x) = (x - c^{(\alpha, \beta)}_{n+1}) P_n^{(\alpha, \beta)}(x) - \lambda^{(\alpha, \beta)}_{n+1} P_{n-1}^{(\alpha, \beta)}(x),
	\end{align}
	
	with $P_{-1}^{(\alpha, \beta)}(x) = 0$ and $P_0^{(\alpha, \beta)}(x) = 1$, where the recurrence  coefficients $c^{(\alpha, \beta)}_{n+1}$ and $\lambda^{(\alpha, \beta)}_{n+1}$ are given by
	\begin{align}
		\nonumber \lambda^{(\alpha, \beta)}_{n+1} &= \frac{4n(n + \alpha)(n + \beta)(n + \alpha + \beta)}{(2n + \alpha + \beta)^2 (2n + \alpha + \beta + 1)(2n + \alpha + \beta - 1)}>0, \qquad n\geq1,\\ c^{(\alpha, \beta)}_{n+1}& = \frac{\beta^2 - \alpha^2}{(2n + \alpha + \beta)(2n + \alpha + \beta + 2)}\in \mathbb{R},
	\end{align}
    	except that
	\begin{align*}
	c_1^{(\alpha,\beta)}
			=
			\frac{\beta-\alpha}{\alpha+\beta+2}
			\quad\text{when }\alpha=-\beta,
			\qquad
			\lambda_2^{(\alpha,\beta)}
			=
			2(\alpha+1)(\beta+1)
			\quad\text{when }\alpha+\beta=-1.
	\end{align*}

    
		\begin{lemma}
		\label{LemmaLocationE1E2}
		Let $n\geq1$ and $\alpha,\beta>-1$. Define
		\begin{align}
			E_1:=E_{n,\alpha,\beta}
			&=
			\frac{1}{2}
			\left[
			\frac{2(n+1)(\alpha-\beta)}
			{(2n+\alpha+\beta+4)(2n+\alpha+\beta+2)}
			-
			\sqrt{\Delta}
			\right],
			\label{eq:E1}\\
			E_2:=F_{n,\alpha,\beta}
			&=
			\frac{1}{2}
			\left[
			\frac{2(n+1)(\alpha-\beta)}
			{(2n+\alpha+\beta+4)(2n+\alpha+\beta+2)}
			+
			\sqrt{\Delta}
			\right],
			\label{eq:E2}
		\end{align}
		where
		\begin{multline}
			\label{eq:Delta}
			\Delta =	\left(
			\frac{2(n+1)(\alpha-\beta)}	{(2n+\alpha+\beta+4)(2n+\alpha+\beta+2)}	\right)^2	\\	+	\frac{4}{(2n+\alpha+\beta+2)^2}	\left[	\frac{(\alpha-\beta)^2(\alpha+\beta+2)}
			{2n+\alpha+\beta+4}	+	\frac{4(n+\alpha+1)(n+\beta+1)(n+\alpha+\beta+2)}
			{2n+\alpha+\beta+3}
			\right].
		\end{multline}
		Then $\Delta>0$, and hence $E_1$ and $E_2$ are real and distinct. Moreover, $	E_1<E_2$ and $-1<E_1<c_{n,\alpha,\beta}<E_2<1$, where 
		$\displaystyle{c_{n,\alpha,\beta}
			=
			\frac{\alpha-\beta}{2n+\alpha+\beta+2}}.$
		In particular, $-1<E_1<0<E_2<1.$
	\end{lemma}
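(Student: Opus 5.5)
E_1 and E_2 are the roots of the monic quadratic
\[
q(x)=x^2-Sx-C,\qquad S:=\frac{2(n+1)(\alpha-\beta)}{(2n+\alpha+\beta+4)(2n+\alpha+\beta+2)},
\]
where \(C:=(\Delta-S^2)/4\) is the bracketed term of \eqref{eq:Delta} multiplied by \(1/(2n+\alpha+\beta+2)^2\). The plan is to read off the reality, distinctness and position of the roots from sign information on \(q\): show \(q(c_{n,\alpha,\beta})<0\) and \(q(\pm1)>0\). Since \(q\) is monic, this forces \(-1<E_1<c_{n,\alpha,\beta}<E_2<1\).

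\textbf{Step 1: positivity of \(\Delta\).} For \(\alpha,\beta>-1\) every factor \(\alpha+\beta+2\), \(n+\alpha+1\), \(n+\beta+1\), \(n+\alpha+\beta+2\), \(2n+\alpha+\beta+3\), \(2n+\alpha+\beta+4\) is positive. Hence \(C>0\), and so \(\Delta=S^2+4C>0\). This gives real distinct roots with \(E_1<E_2\). Since \(E_1E_2=-C<0\), the roots have opposite signs, so \(E_1<0<E_2\). This already yields the final claim once the bounds \(\pm1\) are established.

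\textbf{Step 2: \(E_1<c<E_2\).} It suffices to check
\[
q(c)=c^2-Sc-C<0,\qquad c=\frac{\alpha-\beta}{2n+\alpha+\beta+2}.
\]
Write \(s:=2n+\alpha+\beta+2\). Then \(S=\dfrac{2(n+1)}{s+2}\,c\), so
\[
c^2-Sc=c^2\,\frac{s+2-2n-2}{s+2}=c^2\,\frac{\alpha+\beta+2}{s+2}=\frac{(\alpha-\beta)^2(\alpha+\beta+2)}{s^2(s+2)}.
\]
This is exactly one quarter of the first bracketed term of \(4C\), i.e. it equals the first part of \(C\). Hence
\[
q(c)=-\frac{4(n+\alpha+1)(n+\beta+1)(n+\alpha+\beta+2)}{s^2(2n+\alpha+\beta+3)}<0.
\]

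\textbf{Step 3: \(q(\pm1)>0\).} This is the main obstacle. I would expand
\[
q(1)=1-S-C,\qquad q(-1)=1+S-C,
\]
and multiply through by the positive denominator \(s^2(s+1)(s+2)\), noting \(s+1=2n+\alpha+\beta+3\). Using symmetry under \(\alpha\leftrightarrow\beta\) (which sends \(S\mapsto-S\) and \(x\mapsto-x\)), it is enough to treat \(q(1)\). The aim is to show the resulting polynomial numerator factors, or is at least manifestly positive, in terms of \(a=\alpha+1>0\), \(b=\beta+1>0\) and \(n\ge1\). I would first try to exhibit a factor like \((\alpha+1)\) or \((n+\alpha+1)\) times a polynomial with positive coefficients in \(a,b,n\). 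A reasonable heuristic supports this: the completion points should correspond to zeros of a recurrence coefficient, and such quantities are typically positive at \(x=\pm1\) because the Jacobi values \(P_n^{(\alpha,\beta)}(1)\) have a fixed sign. Should direct expansion be messy, a fallback is to verify positivity of \(q(1)\) by writing the difference \(1-S-C\) as a sum of terms each positive for \(a,b>0\), possibly after checking the boundary \(a\to0\) separately.
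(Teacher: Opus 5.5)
Your strategy is the paper's own: locate the roots of the monic quadratic from its signs at $-1$, $c_{n,\alpha,\beta}$ and $1$, and use $E_1E_2<0$. Steps 1 and 2 are correct, and your value of $q(c_{n,\alpha,\beta})$ is exactly the one in the paper.

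\textbf{The gap is Step 3.} The inequalities $q(\pm1)>0$ are the whole content of $-1<E_1$ and $E_2<1$, and you only express the hope that a numerator factors. Your heuristic does not replace this computation. Evaluating \eqref{Mixed-TTRR-Monic-Jacobi2} at $x=1$ expresses $T_{n,\alpha,\beta}(1)P_n^{(\alpha+1,\beta+1)}(1)$ as $(1-c_{n,\alpha,\beta})P_{n+1}^{(\alpha+1,\beta+1)}(1)$ minus a positive multiple of $P_n^{(\alpha,\beta)}(1)$. That is a difference of two positive numbers, so knowing that Jacobi values at $1$ are positive decides nothing. Your guessed factor is also wrong: $q(1)$ carries $(n+\beta+1)$, not $(n+\alpha+1)$.

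\textbf{How to close it.} Split $q$ the way it arises from the recurrence. With $c=c_{n,\alpha,\beta}$ and $s=2n+\alpha+\beta+2$, write $q(x)=(x-c)(x-c')-K$, where
\[
c'=-\frac{(\alpha-\beta)(\alpha+\beta+2)}{s(s+2)},\qquad K=\frac{4(n+\alpha+1)(n+\beta+1)(n+\alpha+\beta+2)}{s^2(s+1)},
\]
so that $c+c'=S$ and $cc'-K=-C$. Since $1-c=2(n+\beta+1)/s$, the factor $n+\beta+1$ comes out of both terms. A short expansion then gives $(1-c')-\frac{2(n+\alpha+1)(n+\alpha+\beta+2)}{s(s+1)}=\frac{2(n+1)(n+\beta+2)}{(s+1)(s+2)}$, and hence
\[
q(1)=\frac{4(n+1)(n+\beta+1)(n+\beta+2)}{s(s+1)(s+2)}>0,\qquad q(-1)=\frac{4(n+1)(n+\alpha+1)(n+\alpha+2)}{s(s+1)(s+2)}>0.
\]
The second value follows by your $\alpha\leftrightarrow\beta$ symmetry. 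These are exactly the values the paper uses.

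\textbf{A smaller point.} Monicity alone does not turn $q(c)<0$ and $q(\pm1)>0$ into $-1<E_1$ and $E_2<1$; it only gives $\pm1\notin[E_1,E_2]$. You also need a point of $(E_1,E_2)$ lying in $(-1,1)$. Your Step 1 supplies $0$. The paper instead checks $|c|<1$ via $s^2-(\alpha-\beta)^2=4(n+\alpha+1)(n+\beta+1)$.
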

	
	\begin{proof}
		For $n\geq1$ and $\alpha,\beta>-1$, the second term inside the square brackets of \eqref{eq:Delta} is strictly positive. Thus
		the square bracket is strictly positive, and consequently $	\Delta>0.$
		Therefore $E_1$ and $E_2$ are real. Also, by
		\eqref{eq:E1} and \eqref{eq:E2}
		\begin{align*}
			E_2-E_1=\sqrt{\Delta}>0 ~\implies ~	E_1<E_2.
		\end{align*}
	Now, we put $\displaystyle  D=2n+\alpha+\beta+2,
\text{and}~
	c_{n,\alpha,\beta}	=
	\frac{\alpha-\beta}{D}.$
	We can write
		\begin{align*}
			D^2-(\alpha-\beta)^2=	\left(D-(\alpha-\beta)\bigr)\bigl(D+(\alpha-\beta)\right)
	=	4(n+\beta+1)(n+\alpha+1)>0.
		\end{align*}
	Hence $|c_{n,\alpha,\beta}|<1$.	Let
		\begin{align*}
			T_{n,\alpha,\beta}(x)=(x-E_1)(x-E_2).
		\end{align*}
Next, evaluating the quadratic $T_{n,\alpha,\beta}$ at $-1$, $c_{n,\alpha,\beta}$, and $1$,
we obtain
		\begin{align*}
			T_{n,\alpha,\beta}(-1)	&=	\frac{	4(n+1)(n+\alpha+1)(n+\alpha+2)	}{
				(2n+\alpha+\beta+2)(2n+\alpha+\beta+3)(2n+\alpha+\beta+4)	}>0,\\[4pt]
			T_{n,\alpha,\beta}(1)&=	\frac{4(n+1)(n+\beta+1)(n+\beta+2)}{	(2n+\alpha+\beta+2)(2n+\alpha+\beta+3)(2n+\alpha+\beta+4)}>0,
		\end{align*}
		while
		\begin{align*}
			T_{n,\alpha,\beta}(c_{n,\alpha,\beta})&=	-\frac{	4(n+\alpha+1)(n+\beta+1)(n+\alpha+\beta+2)}{
				(2n+\alpha+\beta+2)^2(2n+\alpha+\beta+3)}<0.
		\end{align*}
The intermediate value theorem implies that one zero of
		$T_{n,\alpha,\beta}$ lies in $(-1,c_{n,\alpha,\beta})$ and the other lies in $(c_{n,\alpha,\beta},1)$. Since
		$E_1<E_2$, we obtain $-1<E_1<c_{n,\alpha,\beta}<E_2<1.$

To show that the two roots do not have the same sign, we see that
		\begin{align*}
			E_1E_2	&=	-\frac{1}{(2n+\alpha+\beta+2)^2}	\left[
			\frac{(\alpha-\beta)^2(\alpha+\beta+2)}	{2n+\alpha+\beta+4}	+
			\frac{4(n+\alpha+1)(n+\beta+1)(n+\alpha+\beta+2)}	{2n+\alpha+\beta+3}	\right].
		\end{align*}
		The expression inside the square brackets is strictly positive for
		$n\geq1$ and $\alpha,\beta>-1$. Therefore $	E_1E_2<0.$
		Since $E_1<E_2$, this implies $	E_1<0<E_2.$
		Combining this with $E_1,E_2\in(-1,1)$ gives
		\begin{align*}
			-1<E_1<0<E_2<1.
		\end{align*}
		This completes the proof.
	\end{proof}

	\begin{corollary}\label{Interlace-Jacobi-both-shifted-degree-parameter}
		Let $P^{(\alpha, \beta)}_n(x)$ be a monic Jacobi  polynomial of degree $n$ with $\alpha>-1,\beta>-1$. Let  $\{y_{k,n+1}\}_{k=1}^{n+1}$ denote the zeros of the polynomial  $P^{(\alpha+1, \beta+1)}_{n+1}(x)$. Assume that $P^{(\alpha, \beta)}_n(x)$ and $P^{(\alpha+1, \beta+1)}_{n+1}(x)$ have no common zero. Then,  $(x - E_1)(x - E_2) P^{(\alpha, \beta)}_n(x)\prec P^{(\alpha+1, \beta+1)}_{n+1}(x)$, where $E_1$ and $E_2$ are given in \eqref{eq:E1} and \eqref{eq:E2}. In particular, if both $E_1$ and $E_2$ lie outside the extreme zeros of $P^{(\alpha+1, \beta+1)}_{n+1}(x)$, then $E_1 < y_{1,n+1}$, $E_2 > y_{n+1,n+1}$ and  $P^{(\alpha+1, \beta+1)}_{n+1}(x) \prec P^{(\alpha, \beta)}_{n}(x)$.
	\end{corollary}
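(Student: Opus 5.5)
The corollary follows from Theorem~\ref{MainTheorem3} once we have a mixed recurrence of the form \eqref{GeneralMixedTTRR3} with
\[
P_n=P_n^{(\alpha,\beta)},\qquad G_{n+1}=P_{n+1}^{(\alpha+1,\beta+1)},\qquad Q_n=P_n^{(\alpha+1,\beta+1)}.
\]
For this choice, $G_{n+1}\prec Q_n$ is the classical interlacing of consecutive-degree orthogonal polynomials in the family $\{P_k^{(\alpha+1,\beta+1)}\}$. The plan is to derive, for monic Jacobi polynomials, an identity
\[
A\,P_n^{(\alpha,\beta)}(x)=B(x)\,P_{n+1}^{(\alpha+1,\beta+1)}(x)-(x-E_1)(x-E_2)\,P_n^{(\alpha+1,\beta+1)}(x),
\]
where $A>0$ is a constant, $B$ is a polynomial of degree at most one, and $(x-E_1)(x-E_2)$ is exactly the quadratic $T_{n,\alpha,\beta}$ of Lemma~\ref{LemmaLocationE1E2}.

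To build the identity, start from two classical contiguous relations. The first is the derivative relation: $P_{n+1}^{(\alpha+1,\beta+1)}$ is the normalized derivative of $P_{n+2}^{(\alpha,\beta)}$. The second is the relation
\[
(1-x^2)\,\frac{d}{dx}P_{n+1}^{(\alpha+1,\beta+1)}
\]
expressed in terms of lower-parameter polynomials. Alternatively, one can use the two shift relations
\[
P_n^{(\alpha,\beta)}=P_n^{(\alpha+1,\beta)}+a_nP_{n-1}^{(\alpha+1,\beta)},\qquad
P_n^{(\alpha+1,\beta)}=P_n^{(\alpha+1,\beta+1)}+b_nP_{n-1}^{(\alpha+1,\beta+1)},
\]
which combine to give
\[
P_n^{(\alpha,\beta)}=P_n^{(\alpha+1,\beta+1)}+c\,P_{n-1}^{(\alpha+1,\beta+1)}+d\,P_{n-2}^{(\alpha+1,\beta+1)}.
\]
Applying the three-term recurrence \eqref{TTRR-Monic-Jacobi} with parameters $(\alpha+1,\beta+1)$ twice, we eliminate $P_{n-1}^{(\alpha+1,\beta+1)}$ and $P_{n-2}^{(\alpha+1,\beta+1)}$ in favour of $P_{n+1}^{(\alpha+1,\beta+1)}$ and $P_n^{(\alpha+1,\beta+1)}$. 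This elimination produces rational coefficients. After clearing denominators, the coefficient of $P_n^{(\alpha+1,\beta+1)}$ is a quadratic in $x$, which we match with $-A\,T_{n,\alpha,\beta}(x)$ after normalizing the leading coefficient. The coefficient of $P_{n+1}^{(\alpha+1,\beta+1)}$ is linear and becomes $B$.

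A useful consistency check is to compare the computed quadratic with the values $T_{n,\alpha,\beta}(\pm1)$ and $T_{n,\alpha,\beta}(c_{n,\alpha,\beta})$ recorded in the proof of Lemma~\ref{LemmaLocationE1E2}. Those values strongly suggest this is how $E_1$ and $E_2$ were found. The real-root and ordering facts $E_1<E_2$ then come directly from Lemma~\ref{LemmaLocationE1E2}.

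With the identity in hand, the remaining hypotheses of Theorem~\ref{MainTheorem3} must be checked.
\begin{itemize}
\item Positivity of $A$: $A$ is a product of factors such as $n+\alpha+1$ and $2n+\alpha+\beta+3$, all positive for $\alpha,\beta>-1$. The sign convention must be arranged so that the $-(x-E_1)(x-E_2)$ case, i.e.\ $\varepsilon=-1$, is the one that occurs.
\item Coprimality: this is exactly the assumption that the two polynomials have no common zero.
\item $B(E_i)\neq0$: suppose $B(E_i)=0$. Evaluating the identity at $E_i$ gives $A\,P_n^{(\alpha,\beta)}(E_i)=0$. If $B$ is linear, this forces its zero to be a zero of $P_n^{(\alpha,\beta)}$, which must be ruled out. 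A cleaner route is to observe that in the proof of Lemma~\ref{lem:parity-counting} the hypothesis $B(E_i)\neq0$ is used only to exclude $P_n(E_i)=0$. That case can be handled by checking directly that $B(E_i)\neq0$ from the explicit form of $B$. If $B$ turns out to be constant, as the degree count suggests it may be, this check is immediate.
\end{itemize}

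Theorem~\ref{MainTheorem3} then gives $(x-E_1)(x-E_2)P_n^{(\alpha,\beta)}\prec P_{n+1}^{(\alpha+1,\beta+1)}$. The statement about both completion points lying outside the extreme zeros, with the consequence $P_{n+1}^{(\alpha+1,\beta+1)}\prec P_n^{(\alpha,\beta)}$, is verbatim the second part of Theorem~\ref{MainTheorem3}.

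The main obstacle is the algebraic verification that the eliminated quadratic coefficient equals $A(x-E_1)(x-E_2)$ with the specific $\Delta$ of \eqref{eq:Delta}, and with the correct sign, including the degenerate parameter cases. The approach is to compare the sum of the roots $E_1+E_2$ and the product $E_1E_2$ (both stated in Lemma~\ref{LemmaLocationE1E2}) with the ratios of the coefficients of the derived quadratic. The exceptional recurrence coefficients for $\alpha+\beta=-1$ or $\alpha=-\beta$ are handled by continuity in the parameters.
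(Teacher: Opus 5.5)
Your reduction is the paper's. The paper also applies Theorem~\ref{MainTheorem3} with $Q_n=P_n^{(\alpha+1,\beta+1)}$. It quotes the Driver--Jordaan relation \eqref{Mixed-TTRR-Monic-Jacobi-DJ-2018-eq8} and performs only the final elimination of $P_{n-1}^{(\alpha+1,\beta+1)}$. This gives \eqref{Mixed-TTRR-Monic-Jacobi2} with $A=\frac{(n+\alpha+\beta+1)\lambda_{n+1}^{(\alpha+1,\beta+1)}}{n}$ and $B(x)=x-c_{n,\alpha,\beta}$. Your two single-parameter shifts plus the elimination of $P_{n-2}^{(\alpha+1,\beta+1)}$ simply rederive \eqref{Mixed-TTRR-Monic-Jacobi-DJ-2018-eq8}; one finds $d=-\frac{n}{n+\alpha+\beta+1}\lambda_n^{(\alpha+1,\beta+1)}$.

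Where your plan falls short is the hypothesis $B(E_i)\neq0$. The route you suggest first cannot work. The zero $c_{n,\alpha,\beta}=\frac{\alpha-\beta}{2n+\alpha+\beta+2}$ of $B$ really can be a zero of $P_n^{(\alpha,\beta)}$: for $\alpha=\beta$ and $n$ odd it equals $0$, which is a zero of the odd polynomial $P_n^{(\alpha,\alpha)}$. So there is nothing to rule out there. Your fallback of ``checking directly from the explicit form of $B$'' does not say how. What must be excluded is $E_i=c_{n,\alpha,\beta}$, and this is immediate from \eqref{T-coeff-form}: $T_{n,\alpha,\beta}(c_{n,\alpha,\beta})=-\frac{2n+\alpha+\beta+1}{n}\lambda_{n+1}^{(\alpha+1,\beta+1)}\neq0$. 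Equivalently, use the strict inequalities $E_1<c_{n,\alpha,\beta}<E_2$ of Lemma~\ref{LemmaLocationE1E2}. The value $T_{n,\alpha,\beta}(c_{n,\alpha,\beta})$, which you mention only as a consistency check, is exactly the missing step.

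Some smaller points. First, in a relation of the form \eqref{GeneralMixedTTRR3}, $B$ cannot be constant: comparing the coefficients of $x^{n+2}$ forces $B$ to be monic linear. Second, the sign $\varepsilon=-1$ is not something to arrange. For $n\ge2$, the representation $P_n=BG_{n+1}+CQ_n$ with $\deg B\le1$ and $\deg C\le2$ is unique, because $G_{n+1}$ and $Q_n$ are coprime. In your computation the sign comes out as $-1$ with $A>0$ precisely because $d<0$. Third, the exceptional recurrence coefficients are irrelevant. Only the $(\alpha+1,\beta+1)$ recurrence is used, and its exceptional cases would require $\alpha+\beta\in\{-2,-3\}$.

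The edge case your route actually has is $n=1$. There is no $P_{n-2}^{(\alpha+1,\beta+1)}$ term, the representation above is no longer unique, and a literal elimination yields a relation with constant $B$ and no quadratic factor. To obtain the specific $E_1,E_2$ of \eqref{eq:E1}--\eqref{eq:E2} at $n=1$, use \eqref{Mixed-TTRR-Monic-Jacobi-DJ-2018-eq8} in its stated form; it can be checked directly to hold at $n=1$.
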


	\begin{proof}
		Using (cf.~\cite[eq.~(8)]{Driver-Jordaan-NumerAlg-2018}), the monic Jacobi polynomial $P^{(\alpha,\beta)}_n(x)$ satisfies the following mixed three-term recurrence relation\\
		
		$\displaystyle \frac{n+\alpha+\beta+1}{n}P^{(\alpha, \beta)}_{n}(x)$
		\begin{align}\label{Mixed-TTRR-Monic-Jacobi-DJ-2018-eq8}
			=- \left(x - \frac{\alpha-\beta}{2n+\alpha+\beta+2}\right) P^{(\alpha+1, \beta+1)}_{n-1}(x)+\frac{2n+\alpha+\beta+1}{n} P^{(\alpha+1, \beta+1)}_{n}(x) 
		\end{align}
		for $n \in \mathbb{N}$ and $\alpha, \beta > -1$. Evaluating the recurrence \eqref{TTRR-Monic-Jacobi} at the shifted parameters $(\alpha+1,\beta+1)$ and substituting the expression for $P_{n-1}^{(\alpha+1,\beta+1)}(x)$ from \eqref{Mixed-TTRR-Monic-Jacobi-DJ-2018-eq8}, followed by algebraic simplification, we obtain\\
		
		$\displaystyle 	\frac{(n+\alpha+\beta+1)\,\lambda_{n+1}^{(\alpha+1,\beta+1)}}{n}\;
		{P}_{n}^{(\alpha,\beta)}(x)
		$
		\begin{align}
			\label{Mixed-TTRR-Monic-Jacobi1}
			=\left(x - \frac{\alpha-\beta}{2n+\alpha+\beta+2}\right)
			{P}_{n+1}^{(\alpha+1,\beta+1)}(x)
			- T_{n,\alpha,\beta}(x)\;
			{P}_{n}^{(\alpha+1,\beta+1)}(x),
		\end{align}
		where 
		\begin{align}
			\label{T-coeff-form}
			T_{n,\alpha,\beta}(x)
			=
			\left(x - \frac{\alpha-\beta}{2n+\alpha+\beta+2}\right)
			\left(x - c_{n+1}^{\alpha+1,\beta+1}\right)
			- \frac{2n+\alpha+\beta+1}{n}\lambda_{n+1}^{(\alpha+1,\beta+1)}.
		\end{align}
		Equivalently, $T_{n,\alpha,\beta}(x)$ admits the factored representation
		\begin{align}
			\label{eq:factored_form}
			T_{n,\alpha,\beta}(x) = (x - E_{n,\alpha,\beta})(x - F_{n,\alpha,\beta}),
		\end{align}
		with roots $E_{n,\alpha,\beta}$ and $F_{n,\alpha,\beta}$ given in \eqref{eq:E1} and \eqref{eq:E2}	respectively.	
		Now, \eqref{Mixed-TTRR-Monic-Jacobi1} reads as
		
		$\displaystyle 	\frac{(n+\alpha+\beta+1)\,\lambda_{n+1}^{(\alpha+1,\beta+1)}}{n}\;
		{P}_{n}^{(\alpha,\beta)}(x)
		$
		\begin{align}
			\label{Mixed-TTRR-Monic-Jacobi2}
			=\left(x - \frac{\alpha-\beta}{2n+\alpha+\beta+2}\right)
			{P}_{n+1}^{(\alpha+1,\beta+1)}(x)
			- (x-E_1)(x-E_2)\;
			{P}_{n}^{(\alpha+1,\beta+1)}(x).
		\end{align}
		Let us define  
		$G_{n+1}(x) := {P}_{n+1}^{(\alpha+1,\beta+1)}(x)$,  
		$Q_{n}(x) := {P}_{n}^{(\alpha+1,\beta+1)}(x)$,  
		and $P_{n}(x) := {P}_{n}^{(\alpha,\beta)}(x)$.  
		We also set  
		$A(x) := \frac{(n+\alpha+\beta+1)\,\lambda_{n+1}^{(\alpha+1,\beta+1)}}{n} > 0$  
		and  
		$B(x) := x - \frac{\alpha - \beta}{2n + \alpha + \beta + 2}$. If $B(x)=0$, then $x=\frac{\alpha-\beta}{2n+\alpha+\beta+2}$, and it follows from \eqref{T-coeff-form} that at this point $T(x)=-\frac{2n+\alpha+\beta+1}{n}\lambda_{n+1}^{(\alpha+1,\beta+1)}\neq 0$. Hence no zero of $T$ can be a zero of $B$ and therefore $B(E_i)\neq 0$.
		Furthermore, for any $\alpha, \beta > -1$, the zeros of ${P}_{n}^{(\alpha+1,\beta+1)}(x)$ and ${P}_{n+1}^{(\alpha+1,\beta+1)}(x)$ are strictly interlacing.  Thus, the result follows directly from Theorem~\ref{MainTheorem3}.
	\end{proof}

    	\begin{remark} Assuming $P_n^{(\alpha,\beta)}$ and $P_{n+1}^{(\alpha+1,\beta+1)}$ have no common zeros, it follows from Remark~\ref{Conclusion-Proof-MainTheorem3} that the points $E_1$ and $E_2$, defined in \eqref{eq:E1} and \eqref{eq:E2}, cannot lie simultaneously in the same interval among $(-\infty, y_{1,n+1})$, $(y_{k,n+1}, y_{k+1,n+1})$ for $k=1,\dots,n$, or $(y_{n+1,n+1}, \infty)$, for any $n \in \mathbb{N}$ and $\alpha > -1$, $\beta > -1$, where $\{y_{k,n+1}\}_{k=1}^{n+1}$ are the zeros of $P^{(\alpha+1,\beta+1)}_{n+1}(x)$. Moreover, the following interlacing properties hold using only one of the points
		\begin{enumerate}[label=\alph*)]
			\item 
			If $E_1<y_{1,n+1}$ and $y_{k,n+1}<E_2<y_{k+1,n+1}$ for some $k\in\{1,\dots,n\}$, then $P^{(\alpha+1, \beta+1)}_{n+1}(x)\prec (x-E_2)P^{(\alpha, \beta)}_{n}(x)$.
			\item 
			If $E_2>y_{n+1,n+1}$ and $y_{k,n+1}<E_1<y_{k+1,n+1}$ for some $k\in\{1,\dots,n\}$, then $(x-E_1)P^{(\alpha, \beta)}_{n}(x)\prec P^{(\alpha+1, \beta+1)}_{n+1}(x)$.
		\end{enumerate}
	\end{remark}
	
	\begin{remark}  
		In \cite[Theorem~2 and Remark~6]{Arvesu-Driver-Littlejohn-RamanujanJ-2023}, the authors investigated the relative location of the zeros of 
		${P}^{(\alpha+1,\beta+1)}_{n+1}(x)$ and ${P}^{(\alpha,\beta)}_{n}(x)$ 
		by using the mixed three-term recurrence relation satisfied by the non-monic Jacobi polynomials 
		\cite[eq.~(13)]{Arvesu-Driver-Littlejohn-RamanujanJ-2023} and evaluating it at two consecutive zeros of 
		${P}^{(\alpha,\beta)}_{n}(x)$ (see \cite[eq.~(15)]{Arvesu-Driver-Littlejohn-RamanujanJ-2023}). 
		As a result, the positions of four zeros of 
		${P}^{(\alpha+1,\beta+1)}_{n+1}(x)$ could not be determined analytically in general, 
		and some of their possible configurations were discussed based on numerical observations 
		(see \cite[Remark~6]{Arvesu-Driver-Littlejohn-RamanujanJ-2023}). 
		
		In contrast, in Corollary~\ref{Interlace-Jacobi-both-shifted-degree-parameter}, 
		we employ a mixed three-term recurrence relation for the monic Jacobi polynomials, namely 
		\eqref{Mixed-TTRR-Monic-Jacobi2}, and evaluate it at two consecutive zeros of 
		${P}^{(\alpha+1,\beta+1)}_{n+1}(x)$ rather than those of ${P}^{(\alpha,\beta)}_{n}(x)$. 
		This alternative formulation  simplifies the analysis and leads to a sharper conclusion. 
		In particular, this approach determines the locations of $n-2$ zeros of ${P}^{(\alpha,\beta)}_{n}(x)$, 
		leaving only two zeros undetermined in general. We have analyzed all possible configurations for these two zeros 
		and shown that the positions of the points $E_1$ and $E_2$ determine their locations. 
		Consequently, this yields the precise interlacing of the zeros of 
		${P}^{(\alpha+1,\beta+1)}_{n+1}(x)$ and ${P}^{(\alpha,\beta)}_{n}(x)$ 
		in certain cases, depending on the positions of $E_1$ and $E_2$ for suitable values of $n\in\mathbb{N}$, $\alpha>-1$, and $\beta>-1$. 
		Overall, the present result extends and improves 
		\cite[Theorem~2 and Remark~6]{Arvesu-Driver-Littlejohn-RamanujanJ-2023} 
		by providing a clearer analytical framework for describing the interlacing pattern.
	\end{remark}

	\begin{table}[H]
		\centering
		\caption{Interlacing of the zeros of $P_{n}^{(\alpha,\beta)}$ (denoted by $z_{k,n}$) and $P_{n+1}^{(\alpha+1,\beta+1)}$ (denoted by $y_{k,n+1}$) provided that  $E_1 < y_{1,n+1}$ and $E_2 > y_{n+1,n+1}$, where  $E_1$ and $E_2$ are defined in \eqref{eq:E1} and \eqref{eq:E2} }
		\label{tab1:Interlacingzeros_Jacobi}
		\footnotesize 
		\setlength{\tabcolsep}{12pt} 
		\begin{tabular}{@{}c cc | cc@{}}
			\toprule
			
			\addlinespace[2pt]
			& \multicolumn{2}{c|}{$n=6, \alpha=6, \beta=5$} & \multicolumn{2}{c}{$n=7, \alpha=5, \beta=4$} \\
			& \multicolumn{2}{c|}{$E_1 = -0.84431, E_2=0.86505$} & \multicolumn{2}{c}{$E_1=-0.820009, E_2=0.843713$} \\
			\midrule
			$k$ &  $y_{k,7}$ & $z_{k,6}$ & $y_{k,8}$ & $z_{k,7}$ \\
			\midrule
			1 &  $-0.737759$ & $-0.72289$ & $-0.798587$ &$-0.795866$ \\
			2 &  $-0.527401$ & $-0.475502$  & $-0.614213$ &$-0.582221$\\
			3 & $ -0.293041$ & $-0.197106$ & $-0.400458$ & $-0.329391$ \\
			4 & $-0.0438553$ & $ 0.0958548$  & $-0.166445$ & $-0.0532968$\\
			5 &  $ 0.208718$  &$0.384919$  & $ 0.0766257$  & $ 0.227774$ \\
			6 &  $ 0.453234$  &$0.653855$  & $0.316978$  & $0.495343$  \\
			7 &  $0.680845$  &$-$  & $	0.543279$  & $0.733311$ \\
			8&$-$&$-$&$0.746523$ &$-$\\
			\bottomrule
		\end{tabular}
		
		\vspace{0.2cm}
		\begin{minipage}{0.9\textwidth}
			\footnotesize
			\textbf{Note:} Observe that $E_1 < y_{1,7}$ and $E_2 > y_{7,7}$ for $n=6, \alpha=6, \beta=5$; similarly, $E_1 < y_{1,8}$ and $E_2 > y_{8,8}$ for $n=7, \alpha=5, \beta=4$.
		\end{minipage}
	\end{table}

    \begin{remark}
		Corollary~\ref{Interlace-Jacobi-both-shifted-degree-parameter} also improves the observation of \cite[page 642]{Arvesu-Driver-Littlejohn-RamanujanJ-2023} that ``for $\alpha$ and $\beta$ large compared to $n$, the zeros of ${P}_n^{(\alpha,\beta)}$ and ${P}_{n+1}^{(\alpha+1,\beta+1)}$, $\alpha>-1,\beta>-1$ are interlacing", by establishing that interlacing occurs whenever $E_1 < y_{1,n+1}$ and $E_2 > y_{n+1,n+1}$, where $y_{1,n+1}$ and $y_{n+1,n+1}$ are the smallest and largest zeros of ${P}_{n+1}^{(\alpha+1,\beta+1)}(x)$. In particular, the interlacing of the zeros of ${P}_n^{(\alpha, \beta)}(x)$ and ${P}_{n+1}^{(\alpha+1, \beta+1)}(x)$, with $\alpha > -1$, $\beta > -1$, also occurs when $\alpha$ and $\beta$ are closer to $n$, provided that $E_1 < y_{1,n+1}$ and $E_2 > y_{n+1,n+1}$. This  can also be illustrated by the numerical experiments in Table \ref{tab1:Interlacingzeros_Jacobi}.
	\end{remark}

 \subsubsection{Analysis of completion points and common zeros for the shifted Jacobi pair}

	\begin{proposition}\label{ExactCommonZeroJacobiShifted}
		Let $n\geq 1$ and $\alpha,\beta>-1$. Let
		$E_{1}$ and $E_{2}$ be the two completion
		points defined in \eqref{eq:E1} and \eqref{eq:E2}. Then
		\begin{align*}
			Z(P_n^{(\alpha,\beta)})
			\cap
			Z(P_{n+1}^{(\alpha+1,\beta+1)})
			=
			\left\{
			E_{i}:
			P_{n+1}^{(\alpha+1,\beta+1)}(E_{i})=0,\ i=1,2
			\right\}.
		\end{align*}
		In particular, $P_{n}^{(\alpha,\beta)}$ and $P_{n+1}^{(\alpha+1,\beta+1)}$ have no common zero if and only if $	P_{n+1}^{(\alpha+1,\beta+1)}(E_{1})
		P_{n+1}^{(\alpha+1,\beta+1)}(E_{2})
		\neq0.$
	\end{proposition}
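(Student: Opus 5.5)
The plan is to prove the two inclusions of the set equality using the mixed recurrence \eqref{Mixed-TTRR-Monic-Jacobi2}. Write it with $A=\frac{(n+\alpha+\beta+1)\lambda_{n+1}^{(\alpha+1,\beta+1)}}{n}>0$ and $B(x)=x-\frac{\alpha-\beta}{2n+\alpha+\beta+2}$:
\[
A\,P_n^{(\alpha,\beta)}(x)=B(x)P_{n+1}^{(\alpha+1,\beta+1)}(x)-(x-E_1)(x-E_2)P_n^{(\alpha+1,\beta+1)}(x).
\]

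For the inclusion ``$\subseteq$'', take a common zero $x_0$ of $P_n^{(\alpha,\beta)}$ and $P_{n+1}^{(\alpha+1,\beta+1)}$. Substituting $x_0$ gives $(x_0-E_1)(x_0-E_2)P_n^{(\alpha+1,\beta+1)}(x_0)=0$. Consecutive monic Jacobi polynomials with the same parameters $(\alpha+1,\beta+1)$ strictly interlace, so they share no zero, and hence $P_n^{(\alpha+1,\beta+1)}(x_0)\neq0$. Therefore $x_0\in\{E_1,E_2\}$, and $P_{n+1}^{(\alpha+1,\beta+1)}(x_0)=0$ holds by assumption. This is exactly the common-zero part of Theorem~\ref{MainTheorem3}, which I would simply cite, since its hypotheses were verified in the proof of Corollary~\ref{Interlace-Jacobi-both-shifted-degree-parameter}.

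For the inclusion ``$\supseteq$'', suppose $P_{n+1}^{(\alpha+1,\beta+1)}(E_i)=0$ for some $i$. Evaluating the recurrence at $x=E_i$ kills both terms on the right: the first because $P_{n+1}^{(\alpha+1,\beta+1)}(E_i)=0$, the second because of the factor $(E_i-E_1)(E_i-E_2)=0$. This leaves $A\,P_n^{(\alpha,\beta)}(E_i)=0$. Since $A>0$ (using $\lambda_{n+1}^{(\alpha+1,\beta+1)}>0$ and $n+\alpha+\beta+1>0$ for $n\ge1$, $\alpha,\beta>-1$), we get $P_n^{(\alpha,\beta)}(E_i)=0$, so $E_i$ is a common zero. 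Note that this direction does not require $B(E_i)\neq0$.

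The ``in particular'' statement then follows at once. The intersection is empty if and only if neither $E_1$ nor $E_2$ is a zero of $P_{n+1}^{(\alpha+1,\beta+1)}$, which is the same as $P_{n+1}^{(\alpha+1,\beta+1)}(E_1)\,P_{n+1}^{(\alpha+1,\beta+1)}(E_2)\neq0$.

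The only real obstacle is confirming that $A>0$ and that the recurrence holds identically for all admissible parameters, including the exceptional cases $\alpha+\beta=-1$ and similar ones where the recurrence coefficients have special forms. I would check that $\lambda_{n+1}^{(\alpha+1,\beta+1)}$, with $n\ge1$ and shifted parameters, always uses the generic formula, which is positive since each factor is. I would also note that \eqref{Mixed-TTRR-Monic-Jacobi2} was established as a polynomial identity in the previous proof, so these cases are already covered.
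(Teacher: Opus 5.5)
Your proposal is correct and matches the paper's proof essentially step for step. The inclusion $\subseteq$ comes from the common-zero part of Theorem~\ref{MainTheorem3} applied to \eqref{Mixed-TTRR-Monic-Jacobi2}, using that $P_n^{(\alpha+1,\beta+1)}$ and $P_{n+1}^{(\alpha+1,\beta+1)}$ share no zero; the inclusion $\supseteq$ comes from evaluating the recurrence at $E_i$ and using that the constant coefficient is positive. Your added check that $\lambda_{n+1}^{(\alpha+1,\beta+1)}$ is always given by the generic (positive) formula here, because $\alpha+\beta+2>0$, is a correct detail the paper leaves implicit.
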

		\begin{proof}
		By Theorem~\ref{MainTheorem3} applied to the mixed recurrence relation
		\eqref{Mixed-TTRR-Monic-Jacobi2}, we have
		\begin{align*}
			Z(P_n^{(\alpha,\beta)})
			\cap
			Z(P_{n+1}^{(\alpha+1,\beta+1)})
			\subseteq
			\left\{
			E_{i}:
			P_{n+1}^{(\alpha+1,\beta+1)}(E_{i})=0,\ i=1,2
			\right\}.
		\end{align*}
			Conversely, suppose that  $	P_{n+1}^{(\alpha+1,\beta+1)}(E_{i})=0$ for some $i\in\{1,2\}$. 
	Substituting
		$x=E_{i}$ into \eqref{Mixed-TTRR-Monic-Jacobi2} gives
		\begin{align*}
			\frac{(n+\alpha+\beta+1)\lambda_{n+1}^{(\alpha+1,\beta+1)}}{n}
			P_n^{(\alpha,\beta)}(E_{i})
			=0 \implies 	P_n^{(\alpha,\beta)}(E_{i})=0.
		\end{align*}
		Thus $E_{i}$ is a common zero. The no common zero criterion follows immediately.
	\end{proof}
\begin{remark}
Proposition \ref{ExactCommonZeroJacobiShifted} shows that common zeros can occur only at the completion points $E_1$ and $E_2$. It is therefore useful to record the asymptotic locations of these two possible common-zero obstructions. The asymptotic location of the completion points depends on the limiting regime. First, suppose that $\alpha,\beta>-1$ are fixed and $n\to\infty$. Then
\begin{align*}
	E_1+E_2 \longrightarrow 0, \quad
	E_1E_2 \longrightarrow -\frac{1}{2} \implies E_1 \longrightarrow -\frac{1}{\sqrt{2}}, \quad
	E_2\longrightarrow \frac{1}{\sqrt{2}}.
\end{align*}
Thus, in the large-degree limit, the two possible common-zero obstructions remain in the interior of the support of the Jacobi measure. A different behavior occurs when $n$ is fixed and $\alpha+\beta\to\infty$. In this case,
\begin{align*}
	E_1+E_2 \longrightarrow 0, \quad
	E_1E_2 \longrightarrow -1 \implies E_1 \longrightarrow -1, \quad
	E_2 \longrightarrow 1.
\end{align*}
Hence, in the fixed-degree, large-parameter regime, the completion points approach the boundary points of the support of the measure.    
\end{remark}
    
    \begin{remark}\label{Symmetricevencommonzeros}
		In the ultraspherical case, namely when $\alpha=\beta$, the two completion
		points $E_{1}$ and $E_{2}$ given in \eqref{eq:E1} and \eqref{eq:E2}, respectively, are symmetric
		with respect to the origin. Indeed,
		\begin{equation*}
			E_{1}+E_2
			=
			\frac{2(n+1)(\alpha-\beta)}
			{(2n+\alpha+\beta+4)(2n+\alpha+\beta+2)} ,
		\end{equation*}
		and hence, if $\alpha=\beta$, then $E_2=-E_1$. By using the identity \cite[page 59]{Szego-OP-Book}
		\begin{equation*}
			P_{n+1}^{(\alpha+1,\alpha+1)}(-x)
			=
			(-1)^{n+1}P_{n+1}^{(\alpha+1,\alpha+1)}(x),
		\end{equation*}
		we can conclude that if one of the two completion points is a zero of
		$P_{n+1}^{(\alpha+1,\alpha+1)}$, then the other one is also a zero. Hence,
		the number of common zeros between $P_{n}^{(\alpha,\alpha)}$ and $P_{n+1}^{(\alpha+1,\alpha+1)}$ cannot be exactly one. 
		In particular, whenever common zeros occur in this case, they will be exactly two, namely 
		$E_1$ and $-E_1$. This symmetry is special to the ultraspherical case. For general Jacobi
		parameters $\alpha\neq\beta$, we have $E_1+E_2\neq0$,
		so the two completion points are not symmetric about the origin. Also, the zeros of $P_{n+1}^{(\alpha+1,\beta+1)}$ are not symmetric in general.
		Therefore, in the general Jacobi case, the number of common zeros can be at most two,
		depending on whether none, exactly one, or both completion points
		$E_1$ and $E_2$ are zeros of
		$P_{n+1}^{(\alpha+1,\beta+1)}$.
	\end{remark}

The following example shows that the polynomials $P_n^{(\alpha,\beta)}$ and $P_{n+1}^{(\alpha+1,\beta+1)}$ can have common zeros for suitable choices of the parameters $\alpha,\beta$ and the degree $n$.
	\begin{example}
		For 	$\alpha=\beta=-\frac{1}{2},$ the Jacobi polynomials $P_n^{(\alpha,\beta)}$ and $P_{n+1}^{(\alpha+1,\beta+1)}$ can be written as
		\begin{equation*}
			P_n^{(-1/2,-1/2)}(x)= 2^{1-n}T_n(x),
			\qquad
			P_{n+1}^{(1/2,1/2)}(x)=2^{-n-1}U_{n+1}(x),
		\end{equation*}
		where $T_n$ and $U_n$ are Chebyshev polynomials of the first and second kind, respectively (cf.~\cite[p.~60, Eq.~(4.1.7)]{Szego-OP-Book}).
	The corresponding trigonometric representations are
		\begin{equation*}
			T_n(\cos\theta)=\cos(n\theta),
			\qquad
			U_{n+1}(\cos\theta)=\frac{\sin((n+2)\theta)}{\sin\theta}.
		\end{equation*}
		By Proposition \ref{ExactCommonZeroJacobiShifted}, if  $P_n^{(\alpha,\beta)}$ and $P_{n+1}^{(\alpha+1,\beta+1)}$  have a common zero, then it must belong to the set
		$\{E_1,E_2\}$. Therefore, it is sufficient to check $E_1$ and $E_2$.
		For	$\alpha=\beta=-\frac{1}{2}$, the completion points are
		\begin{equation*}
			E_1=-\frac1{\sqrt2},
			\qquad
			E_2=\frac1{\sqrt2}.
		\end{equation*}
		At $\theta=\frac{\pi}{4}$, we have
		\begin{equation*}
			T_n\left(\frac1{\sqrt2}\right)	=	\cos\frac{n\pi}{4}, \qquad 	U_{n+1}\left(\frac1{\sqrt2}\right)	=	\sqrt{2}\sin\left(\frac{(n+2)\pi}{4}\right).
		\end{equation*}
		Both quantities vanish precisely when $	n\equiv 2 \pmod 4.$ As discussed in Remark \ref{Symmetricevencommonzeros},
		the same conclusion holds at $x=-1/\sqrt2$ by symmetry.	Therefore, for every $	n=4k+2, k=0,1,2,\ldots,$
		the polynomials $	P_n^{(-1/2,-1/2)}(x)$ and	$P_{n+1}^{(1/2,1/2)}(x)$ have common zeros at $\displaystyle 	E_1=-\frac1{\sqrt2},
		E_2=\frac1{\sqrt2}.$	Hence, for every $n\equiv2\pmod4$, the pair has exactly two common zeros,
		namely $E_1$ and $E_2$.
	\end{example}

 In the previous example, we showed that a common zero of the shifted Jacobi pair can occur for some choices of the parameters. We now show that this phenomenon is rare when the degree is fixed. More precisely, the set of parameters $(\alpha,\beta)$ that produce a common zero is thin in the sense that it has empty interior and two-dimensional Lebesgue measure zero in the parameter domain.
	\begin{proposition}\label{Thinparameterset-Jacobicommonzero}
		Fix $n\geq 1$, and let $	\mathcal{D}=(-1,\infty)\times(-1,\infty).$ 	Define the exceptional parameter set
		\begin{align*}
			\mathcal{A}_n	=		\left\{		(\alpha,\beta)\in\mathcal{D}:
			P_n^{(\alpha,\beta)}
			\text{ and }
			P_{n+1}^{(\alpha+1,\beta+1)}
			\text{ have a common zero}
			\right\}.
		\end{align*}
		Then
		$\mathcal{A}_n$ has empty interior and two-dimensional Lebesgue measure zero in $\mathcal{D}$.
		Consequently, the no-common-zero set $	C_n	=	\mathcal{ D}\setminus\mathcal{A}_n$ 	is open and dense in $\mathcal{D}$.
	\end{proposition}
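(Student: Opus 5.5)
By Proposition~\ref{ExactCommonZeroJacobiShifted}, the exceptional set is exactly the zero set in $\mathcal{D}$ of a single function,
\[
\mathcal{A}_n=\{(\alpha,\beta)\in\mathcal{D}:\ \Phi_n(\alpha,\beta)=0\},\qquad \Phi_n(\alpha,\beta):=P_{n+1}^{(\alpha+1,\beta+1)}(E_1)\,P_{n+1}^{(\alpha+1,\beta+1)}(E_2).
\]
The plan is to show that $\Phi_n$ is real-analytic on the connected open set $\mathcal{D}$ and not identically zero. The conclusions then follow from standard facts: the zero set of a nontrivial real-analytic function on a connected open set is closed, has empty interior, and has Lebesgue measure zero. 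The complement $C_n$ is therefore open, being the preimage of $\mathbb{R}\setminus\{0\}$ under a continuous map, and dense, since its complement has empty interior.

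\textbf{Analyticity.} To avoid the square root in \eqref{eq:E1}--\eqref{eq:E2}, I would use symmetric functions. Write $S=E_1+E_2$ and $P=E_1E_2$; by Lemma~\ref{LemmaLocationE1E2} both are rational in $(\alpha,\beta)$ with denominators that do not vanish on $\mathcal{D}$. The coefficients of the monic polynomial $P_{n+1}^{(\alpha+1,\beta+1)}$ are also rational in $(\alpha,\beta)$ without poles on $\mathcal{D}$. This follows from the three-term recurrence with shifted parameters, whose coefficients have denominators $2k+\alpha+\beta+c$ with $c\ge 2$ and hence nonvanishing there; alternatively, use the explicit hypergeometric formula divided by its leading coefficient.

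Since $\Phi_n$ is a symmetric polynomial in $E_1,E_2$ whose coefficients are these rational functions, it equals a polynomial in $S$, $P$ and those coefficients. Concretely, it is the resultant $\mathrm{Res}_x\bigl(x^2-Sx+P,\ P_{n+1}^{(\alpha+1,\beta+1)}(x)\bigr)$. Hence $\Phi_n$ is a rational function of $(\alpha,\beta)$, analytic on $\mathcal{D}$, and $\mathcal{A}_n$ is contained in a real algebraic curve. Because the zero set of a nonzero polynomial has empty interior and measure zero, it would even suffice to clear denominators and work with a polynomial.

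\textbf{Nontriviality.} This is the main obstacle: I must exhibit one $(\alpha,\beta)\in\mathcal{D}$ with $\Phi_n\neq0$, uniformly in $n$. My first attempt would be $\alpha=\beta=-1/2$, using the Chebyshev computation in the preceding Example. For $n\not\equiv 2\pmod 4$, $T_n(\pm 1/\sqrt2)=\cos(n\pi/4)\neq0$, so $\Phi_n\neq0$ there. That leaves $n\equiv2\pmod4$, where this point lies in $\mathcal{A}_n$.

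For those $n$ I would try either of two routes. The first is a different Chebyshev-type point, for example $\alpha=\beta=1/2$, with $U_n$ against $P^{(3/2,3/2)}_{n+1}$ expressed through Gegenbauer polynomials; this may be messy. The second, which I expect to be cleaner, is an asymptotic regime. By the preceding Remark, for fixed $n$ and $\alpha=\beta\to\infty$ we have $E_2\to1$, while the zeros of $P_{n+1}^{(\alpha+1,\alpha+1)}$ concentrate near $0$ at scale $\alpha^{-1/2}$. So for large $\alpha$ the point $E_2$ lies outside the zero set of $P_{n+1}^{(\alpha+1,\alpha+1)}$, giving $\Phi_n\neq0$. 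To make this rigorous I would bound the largest zero by $\sqrt{(n+1)^2/(n+\alpha+2)}$-type estimates, or simply use that all zeros of the shifted polynomial lie in $(-1,1)$ while $E_2$ exceeds them. A crude alternative that avoids zero bounds is to note that the monic polynomial $P^{(\alpha+1,\alpha+1)}_{n+1}(x)\to x^{n+1}$ as $\alpha\to\infty$; since $E_2\to1$, we get $P^{(\alpha+1,\alpha+1)}_{n+1}(E_2)\to1\neq0$, and likewise at $E_1\to-1$. This last argument works for every $n$ at once, so I would adopt it as the primary nontriviality argument and keep the Chebyshev check only as an illustration.
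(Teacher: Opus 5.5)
Your proposal is correct and follows the paper's proof essentially step for step. You reduce $\mathcal{A}_n$, via Proposition~\ref{ExactCommonZeroJacobiShifted}, to the zero set of the resultant $\operatorname{Res}_x\bigl(T_{n,\alpha,\beta},P_{n+1}^{(\alpha+1,\beta+1)}\bigr)=P_{n+1}^{(\alpha+1,\beta+1)}(E_1)\,P_{n+1}^{(\alpha+1,\beta+1)}(E_2)$, show it is not identically zero along $\alpha=\beta=t\to\infty$ (where $E_{1,2}\to\mp1$ while the shifted polynomial degenerates to $x^{n+1}$, which is the same fact as the paper's ``Gegenbauer zeros tend to the origin''), and then invoke the zero-set properties of a nontrivial analytic function on the connected set $\mathcal{D}$. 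Your remark that the resultant is rational in $(\alpha,\beta)$ with no poles on $\mathcal{D}$ lets you replace the paper's appeal to a real-analytic zero-set theorem by the elementary fact for polynomials. You are also right to keep the monic-limit argument as the primary one, since the aside that all zeros lie in $(-1,1)$ would not by itself place $E_2$ beyond them.
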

	
	\begin{proof} By Proposition~\ref{ExactCommonZeroJacobiShifted}, the shifted Jacobi pair
		$P_n^{(\alpha,\beta)}$ and $P_{n+1}^{(\alpha+1,\beta+1)}$ has a common zero
		if and only if
		\begin{equation*}
			P_{n+1}^{(\alpha+1,\beta+1)}(E_1)
			P_{n+1}^{(\alpha+1,\beta+1)}(E_2)=0.
		\end{equation*}
		Thus,
		\begin{equation*}
			\mathcal{A}_n
			=
			\left\{
			(\alpha,\beta)\in\mathcal{D}:
			P_{n+1}^{(\alpha+1,\beta+1)}(E_1)
			P_{n+1}^{(\alpha+1,\beta+1)}(E_2)=0
			\right\}.
		\end{equation*}
		For fixed $n\geq1$, define the resultant $\mathcal{R}_n: \mathcal{D}\rightarrow \mathbb{R}$ by
		\begin{align*}
			\mathcal{R}_n(\alpha,\beta)
			=
			\operatorname{Res}_x
			\left(
			T_{n,\alpha,\beta}(x),
			P_{n+1}^{(\alpha+1,\beta+1)}(x)
			\right),
		\end{align*}
		where the quadratic polynomial $T_{n,\alpha,\beta}(x)$ is given in \eqref{eq:factored_form}, and its zeros  $E_1=E_{n,\alpha,\beta}$ and $E_2=F_{n,\alpha,\beta}$ are defined
		in \eqref{eq:E1} and \eqref{eq:E2}. Then	\begin{align*}
			\mathcal A_n
			=
			\left\{
			(\alpha,\beta)\in\mathcal{D}:
			\mathcal{R}_n(\alpha,\beta)=0
			\right\}.
		\end{align*}
		Next, we show that $\mathcal{R}_n$ is not identically zero on $\mathcal{D}$. For this,	it is enough to exhibit one admissible pair $(\alpha,\beta)$ for which
		$\mathcal{R}_n(\alpha,\beta)\neq0$. 	For $\alpha=\beta=t$, we have $	E_2=-E_1,$
		and, from \eqref{eq:E1}-\eqref{eq:E2},
		\begin{align*}
			E_1^2
			=
			\frac{n+2t+2}{2n+2t+3}.
		\end{align*}
		Thus, for fixed $n,~ E_1\longrightarrow -1,	E_2\longrightarrow 1$,
		as $t\to\infty.$	For fixed degree, the zeros of Gegenbauer polynomials tend to the origin
		as the parameter tends to infinity \cite[page 169]{Temme-Specialfunctions1996}. Since
		$P_{n+1}^{(t+1,t+1)}$ is a nonzero constant multiple of Gegenbauer polynomial
		$C_{n+1}^{\,t+3/2}$, it follows that the zeros of
		$P_{n+1}^{(t+1,t+1)}$ tend to $0$ as $t\to\infty$. 
			Thus, for sufficiently large $t$, all zeros of
		$P_{n+1}^{(t+1,t+1)}$ lie in a small neighborhood of the origin, while
		$E_1$ and $E_2$ are close to $-1$ and $1$, respectively. Hence neither
		$E_1$ nor $E_2$ is a zero of $P_{n+1}^{(t+1,t+1)}$, and therefore
		\begin{align*}
			P_{n+1}^{(t+1,t+1)}(E_1)
			P_{n+1}^{(t+1,t+1)}(E_2)
			\neq0.
		\end{align*}
		Since $T_{n,t,t}$ is monic with roots $E_1$ and $E_2$, we have
		\begin{align*}
			\mathcal{R}_n(t,t)
			=
			P_{n+1}^{(t+1,t+1)}(E_1)
			P_{n+1}^{(t+1,t+1)}(E_2)
			\neq0,
		\end{align*}
		for sufficiently large $t$.
		Hence, $\mathcal{R}_n$ is not identically zero on
		$\mathcal{D}$.
		
	Next, we see that	the coefficients of $T_{n,\alpha,\beta}$ and
		$P_{n+1}^{(\alpha+1,\beta+1)}$ are real-analytic functions of
		$(\alpha,\beta)$ on $\mathcal{D}$, and hence $\mathcal{R}_n$ is real analytic as well.
		The zero set of a nontrivial real-analytic function cannot contain a
		nonempty open subset of its domain. Therefore $\mathcal{A}_n$ has empty
		interior in $\mathcal{D}$. Because $\mathcal{D}$ is open and connected in $\mathbb{R}^2$ and $\mathcal{R}_n$ is not identically zero on
		$\mathcal{D}$,  \cite[Proposition 1]{Mityagin2020-Zerosetofrealanalytic} implies that the zero set of a nontrivial
		real-analytic function $\mathcal{R}_n$ has two-dimensional Lebesgue measure zero.
		Further, the continuity of $\mathcal{R}_n$ implies that $\mathcal{A}_n=\mathcal{R}_n^{-1}(\{0\})$
		is closed relative to $\mathcal D$. Hence $	C_n	=	\mathcal{ D}\setminus\mathcal{A}_n$	is open in $\mathcal{D}$. Since $\mathcal{A}_n$ has empty interior,
		$C_n$ is dense in $\mathcal{D}$.
	\end{proof}
	
	\begin{remark}
	Although Proposition \ref{Thinparameterset-Jacobicommonzero} is stated for a fixed degree, it also yields
	a simultaneous generic statement over all degrees. Indeed, if
	\begin{align*}
		\mathcal{A}
		=
		\bigcup_{n\geq1}\mathcal{A}_n,
	\end{align*}
then $\mathcal{A}$ has two-dimensional Lebesgue measure zero in
$\mathcal{D}$ and is of first category. Consequently, for every $(\alpha,\beta)\in\mathcal{D}\setminus\mathcal{A}$, the Jacobi polynomials $P_n^{(\alpha,\beta)}$ and $P_{n+1}^{(\alpha+1,\beta+1)}$ have no common zero for every $n\geq 1$.  
\end{remark}

\subsection[
Cross-family Turan-type inequality for the doubly shifted pair
Pn(alpha,beta) and P(n+1)(alpha+1,beta+1)
]{
	Cross-family Tur\'an-type inequality for the doubly shifted pair
	$P_n^{(\alpha,\beta)}$ and
	$P_{n+1}^{(\alpha+1,\beta+1)}$
}\label{Turan inequalities}

We now specialize the preceding result to monic Jacobi polynomials. For the monic Jacobi polynomial $P^{(\alpha,\beta)}_n$, the parameter shift $(\alpha,\beta,n)\mapsto(\alpha+1,\beta+1,n+1)$ does not preserve the interlacing property between the zeros of $P_n^{(\alpha,\beta)}$ and $P_{n+1}^{(\alpha+1,\beta+1)}$. However, Corollary \ref{Interlace-Jacobi-both-shifted-degree-parameter} explicitly identifies two completion points $E_1$ and $E_2$, which restore this interlacing. Subsequently, we apply Theorem \ref{thm:general} to this Jacobi case to obtain the following cross-family Tur\'an-type inequality  and its sharpened form. The inequality involves neighboring degrees $n-1$, $n$, $n+1$, as in the classical Tur\'an expression, but with the Jacobi parameters shifted contiguously through the levels $(\alpha,\beta)$, $(\alpha+1,\beta+1)$ and  $(\alpha+2,\beta+2)$.

\begin{theorem}\label{thm:JacobiTuran}
	Let $\alpha,\beta>-1$, $n\ge1$, and let $E_1,E_2\in(-1,1)$
	be the completion points given by \eqref{eq:E1} and \eqref{eq:E2}. Assume that $P_n^{(\alpha,\beta)}$ and $P_{n+1}^{(\alpha+1,\beta+1)}$  have no common zeros. Then, for every $x\in \mathbb{R}$, the following   inequality  holds
		\begin{align}\label{eq:JacobiTuranSharp}
		\nonumber	(x-E_1)(x-E_2)\Bigl[
		(n+1)P_n^{(\alpha,\beta)}P_n^{(\alpha+2,\beta+2)}
		&-n\,P_{n-1}^{(\alpha+1,\beta+1)}P_{n+1}^{(\alpha+1,\beta+1)}
		\Bigr]
		\\&<(2x-E_1-E_2)\,P_n^{(\alpha,\beta)}P_{n+1}^{(\alpha+1,\beta+1)}-\left(P_{n+1}^{(\alpha+1,\beta+1)}\right)^2.
	\end{align}
In particular, this yields the following cross-family Tur\'an-type
	inequality for the doubly shifted pair
	$P_n^{(\alpha,\beta)}$ and
	$P_{n+1}^{(\alpha+1,\beta+1)}$:
	\begin{align}\label{eq:JacobiTuran}
		(x-E_1)(x-E_2)\Bigl[
		(n+1)P_n^{(\alpha,\beta)}P_n^{(\alpha+2,\beta+2)}
		-n\,P_{n-1}^{(\alpha+1,\beta+1)}P_{n+1}^{(\alpha+1,\beta+1)}
		\Bigr]
		<(2x-E_1-E_2)\,P_n^{(\alpha,\beta)}P_{n+1}^{(\alpha+1,\beta+1)}.
	\end{align}
\end{theorem}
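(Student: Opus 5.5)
The plan is to obtain Theorem~\ref{thm:JacobiTuran} as a direct specialization of Theorem~\ref{thm:general}. The only new ingredient is an explicit formula for the cross-family Wronskian $W(P_n,G_{n+1})$ in terms of Jacobi polynomials with shifted parameters.

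First I set, as in the proof of Corollary~\ref{Interlace-Jacobi-both-shifted-degree-parameter},
\[
P_n:=P_n^{(\alpha,\beta)},\qquad G_{n+1}:=P_{n+1}^{(\alpha+1,\beta+1)},\qquad Q_n:=P_n^{(\alpha+1,\beta+1)},
\]
with the constant $A=\frac{(n+\alpha+\beta+1)\lambda_{n+1}^{(\alpha+1,\beta+1)}}{n}>0$ and $B(x)=x-\frac{\alpha-\beta}{2n+\alpha+\beta+2}$. The mixed recurrence \eqref{Mixed-TTRR-Monic-Jacobi2} is exactly of the form \eqref{GeneralMixedTTRR3}. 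The proof of that corollary already checks the remaining hypotheses of Theorem~\ref{MainTheorem3}:
\begin{itemize}
\item all three polynomials are monic with real simple zeros;
\item $G_{n+1}\prec Q_n$, by interlacing of consecutive Jacobi polynomials with the same parameters;
\item $E_1<E_2$ are real, by Lemma~\ref{LemmaLocationE1E2};
\item $B(E_i)\neq 0$.
\end{itemize}
The assumption of no common zeros is precisely coprimality of $P_n$ and $G_{n+1}$. Theorem~\ref{thm:general} therefore yields \eqref{eq:general-turan-sharp} and \eqref{eq:general-turan} for this pair, for every real $x$.

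Second, I compute $W(P_n,G_{n+1})=P_nG_{n+1}'-P_n'G_{n+1}$. For this I use the classical derivative identity for Jacobi polynomials. In non-monic normalization it reads $\frac{d}{dx}P_m^{(a,b)}=\frac{m+a+b+1}{2}P_{m-1}^{(a+1,b+1)}$. In monic normalization it becomes
\[
\frac{d}{dx}P_m^{(a,b)}(x)=m\,P_{m-1}^{(a+1,b+1)}(x),
\]
because the derivative of a monic degree-$m$ polynomial has leading coefficient $m$ and the right side is again monic. Applying this with $(m,a,b)=(n,\alpha,\beta)$ and with $(m,a,b)=(n+1,\alpha+1,\beta+1)$ gives
\[
W(P_n,G_{n+1})=(n+1)P_n^{(\alpha,\beta)}P_n^{(\alpha+2,\beta+2)}-n\,P_{n-1}^{(\alpha+1,\beta+1)}P_{n+1}^{(\alpha+1,\beta+1)}.
\]

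Finally, I substitute this expression into \eqref{eq:general-turan-sharp}, which gives \eqref{eq:JacobiTuranSharp}. Substituting it into \eqref{eq:general-turan}, or dropping the nonnegative term $(P_{n+1}^{(\alpha+1,\beta+1)})^2$, gives \eqref{eq:JacobiTuran}. For $n=1$ the term $P_0^{(\alpha+1,\beta+1)}=1$ causes no difficulty.

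There is no substantial obstacle in this argument. The one point needing care is the normalization: the monic derivative identity must carry the factor $m$, not the non-monic constant $\frac{m+a+b+1}{2}$, since that factor is what produces the coefficients $n+1$ and $n$ in the statement. I would confirm it by comparing leading coefficients as above.
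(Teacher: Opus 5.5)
Your proposal is correct and follows the paper's proof essentially verbatim: you check the hypotheses of Theorem~\ref{thm:general} via the mixed recurrence \eqref{Mixed-TTRR-Monic-Jacobi2} and Corollary~\ref{Interlace-Jacobi-both-shifted-degree-parameter}, compute $W(P_n^{(\alpha,\beta)},P_{n+1}^{(\alpha+1,\beta+1)})$ with the monic derivative identity $\frac{d}{dx}P_m^{(a,b)}=m\,P_{m-1}^{(a+1,b+1)}$, and substitute into \eqref{eq:general-turan-sharp} and \eqref{eq:general-turan}. Your care about the monic normalization is the right point to check, since it is what produces the coefficients $n+1$ and $n$ in the statement.
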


\begin{proof}	Let us define  
	$G_{n+1}(x) := {P}_{n+1}^{(\alpha+1,\beta+1)}(x)$
	and $P_{n}(x) := {P}_{n}^{(\alpha,\beta)}(x)$.  
	For any $\alpha,\beta>-1$
	and $n\ge1$, the derivative of monic Jacobi polynomials is 
	\begin{equation}\label{eq0:Jacobider}
		\frac{d}{dx}P_n^{(\alpha,\beta)}(x)
		=n\,P_{n-1}^{(\alpha+1,\beta+1)}(x).
	\end{equation}	
	Now,  using these derivative identities, we can write the Wronskian as
	\begin{align}\label{eq1: Wdoublyshifted}
		\nonumber	W(P^{(\alpha,\beta)}_n,P_{n+1}^{(\alpha+1,\beta+1)})(x)&=P^{(\alpha,\beta)}_{n}(x)\frac{d}{dx}P_{n+1}^{(\alpha+1,\beta+1)}(x)-P^{(\alpha+1,\beta+1)}_{n+1}(x)\frac{d}{dx}P_n^{(\alpha,\beta)}(x)\\
		&=(n+1)P_n^{(\alpha,\beta)}(x)P_n^{(\alpha+2,\beta+2)}(x)
		-nP_{n-1}^{(\alpha+1,\beta+1)}(x)P_{n+1}^{(\alpha+1,\beta+1)}(x).
	\end{align}
	From Corollary \ref{Interlace-Jacobi-both-shifted-degree-parameter},  $(x - E_1)(x - E_2) P^{(\alpha, \beta)}_n(x)\prec P^{(\alpha+1, \beta+1)}_{n+1}(x)$, where $E_1$ and $E_2$ are given in \eqref{eq:E1} and \eqref{eq:E2}. Thus, all the hypotheses of Theorem \ref{thm:general} are satisfied, and the result follows by substituting the value of	$W(P^{(\alpha,\beta)}_n,P_{n+1}^{(\alpha+1,\beta+1)})(x)$ in Theorem \ref{thm:general}.  
\end{proof}

Applying Proposition~\ref{lem:generaldecomposition} to the Jacobi polynomials $P_n^{(\alpha,\beta)}$ and $G_{n+1}=P_{n+1}^{(\alpha+1,\beta+1)}$  yields the following Wronskian decomposition and estimates.

\begin{corollary}\label{thm:Jacobi-Wronskian-combined}
	Let $\alpha,\beta>-1$, $n\geq 1$, and let $E_1<E_2$ be as in \eqref{eq:E1} and \eqref{eq:E2}. Assume that $P_n^{(\alpha,\beta)}$ and $P_{n+1}^{(\alpha+1,\beta+1)}$  have no common zeros. Let \[C_{n,\alpha,\beta}\!=\!\frac{(n+\alpha+\beta+1)\lambda_{n+1}^{(\alpha+1,\beta+1)}}{n}~\text{and}~ \tilde{R}(x)=\!	-\,(P_{n+1}^{(\alpha+1,\beta+1)}(x))^2
	+(2x-E_1-E_2)P_{n}^{(\alpha+1,\beta+1)}(x)P_{n+1}^{(\alpha+1,\beta+1)}(x).\] 
	 Then, for
	every $x\in\mathbb{R}$,
	\begin{equation}\label{eq:Jacobi Wronkian decomposition}	C_{n,\alpha,\beta}W(P^{(\alpha,\beta)}_n,P_{n+1}^{(\alpha+1,\beta+1)})(x)
		=	-(x-E_1)(x-E_2)W(P_{n}^{(\alpha+1,\beta+1)},P_{n+1}^{(\alpha+1,\beta+1)})(x)
		+\tilde{R}(x).\end{equation}
Further
	\begin{equation*}
C_{n,\alpha,\beta}W(P^{(\alpha,\beta)}_n,P_{n+1}^{(\alpha+1,\beta+1)})(x)
	\begin{cases}
		>\tilde{R}(x) & x\in(E_1,E_2),\\
		<\tilde{R}(x) & x\in (-\infty,E_1)\cup(E_2,\infty),
	\end{cases}
	\end{equation*}
	with equality if and only if $x\in\{E_1,E_2\}$.
		Moreover, for every $x\in\mathbb{R}$,
	\begin{align*}
	C_{n,\alpha,\beta}(2x-E_1-E_2)&P_{n}^{(\alpha,\beta)}(x)P_{n+1}^{(\alpha+1,\beta+1)}(x)
		\\&>(x-E_1)(x-E_2)\tilde{R}(x)-(x-E_1)^2(x-E_2)^2W(P_{n}^{(\alpha+1,\beta+1)},P_{n+1}^{(\alpha+1,\beta+1)})(x).
	\end{align*}
\end{corollary}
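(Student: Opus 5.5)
The plan is to derive this corollary directly from Proposition~\ref{lem:generaldecomposition} once the Jacobi mixed recurrence \eqref{Mixed-TTRR-Monic-Jacobi2} is recognized as an instance of \eqref{GeneralMixedTTRR3} with $Q_n=G_n$. Set $P_n:=P_n^{(\alpha,\beta)}$, $G_k:=P_k^{(\alpha+1,\beta+1)}$ (so $G_{n+1}$ and $Q_n=G_n$ lie in the same orthogonal family, orthogonal on $(-1,1)$ with respect to the positive weight $(1-x)^{\alpha+1}(1+x)^{\beta+1}$), $A(x):=C_{n,\alpha,\beta}$ and $B(x):=x-\frac{\alpha-\beta}{2n+\alpha+\beta+2}$. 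The hypotheses of Theorem~\ref{MainTheorem3} were already checked in the proof of Corollary~\ref{Interlace-Jacobi-both-shifted-degree-parameter}. These are $A>0$ (since $\lambda_{n+1}^{(\alpha+1,\beta+1)}>0$ and $n+\alpha+\beta+1>0$), $B(E_i)\ne0$, $E_1<E_2$ by Lemma~\ref{LemmaLocationE1E2}, and $G_{n+1}\prec G_n$. Coprimality is the no-common-zero assumption. In addition, $A$ and $B$ are polynomials, hence differentiable on $\mathbb{R}$.

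Next I compute $R(x)$ from Proposition~\ref{lem:generaldecomposition}. Since $A$ is constant, $A'\equiv0$ and the term $A'P_nG_{n+1}$ drops out. Since $B'\equiv1$, the second term is $-G_{n+1}^2$. The third term is $(2x-E_1-E_2)G_nG_{n+1}$. Hence $R=\tilde R$ exactly as defined in the statement. Substituting into part (a) gives \eqref{eq:Jacobi Wronkian decomposition}. Part (b) gives the two-sided bound with equality exactly at $E_1,E_2$; this uses $W(G_n,G_{n+1})>0$ from Christoffel--Darboux for the $(\alpha+1,\beta+1)$ family. Part (c) gives the final inequality.

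The only point needing care is the bookkeeping in the specialization. One must confirm that the sign convention of \eqref{Mixed-TTRR-Monic-Jacobi2}, namely the minus sign in front of $(x-E_1)(x-E_2)P_n^{(\alpha+1,\beta+1)}$, matches \eqref{GeneralMixedTTRR3}. One must also confirm that the Wronskian orientation $W(P_n,G_{n+1})=P_nG_{n+1}'-P_n'G_{n+1}$ agrees with the one used in Proposition~\ref{lem:generaldecomposition}. Beyond these checks I do not expect any real obstacle, since the corollary is a direct specialization.
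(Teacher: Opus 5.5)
Your proposal is correct and follows the paper's proof essentially step for step. You identify \eqref{Mixed-TTRR-Monic-Jacobi2} as an instance of \eqref{GeneralMixedTTRR3} with $Q_n=G_n=P_n^{(\alpha+1,\beta+1)}$, $A=C_{n,\alpha,\beta}$ and $B(x)=x-\frac{\alpha-\beta}{2n+\alpha+\beta+2}$, use $A'=0$ and $B'=1$ to get $R=\tilde R$, and then read off parts (a), (b) and (c) of Proposition~\ref{lem:generaldecomposition}, with (b) relying on the Christoffel--Darboux positivity of $W(P_n^{(\alpha+1,\beta+1)},P_{n+1}^{(\alpha+1,\beta+1)})$ — exactly as the paper does, only with a more explicit check of the hypotheses ($A>0$, $B(E_i)\neq0$, $E_1<E_2$, interlacing of $G_{n+1}$ and $G_n$, coprimality from the no-common-zero assumption).
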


\begin{proof}
	Recall that the relation \eqref{Mixed-TTRR-Monic-Jacobi2} has the form \eqref{GeneralMixedTTRR3} with 
	$G_{n+1}(x) := {P}_{n+1}^{(\alpha+1,\beta+1)}(x)$,   $P_{n}(x) := {P}_{n}^{(\alpha,\beta)}(x)$,  
	 $Q_{n}(x)=G_n(x):= {P}_{n}^{(\alpha+1,\beta+1)}(x)$, $A(x) := C_{n,\alpha,\beta}>0$  
	 and  
	 $B(x) := x - \frac{\alpha - \beta}{2n + \alpha + \beta + 2}$.  
	Since $A'=0$ and $B'=1$, Proposition \ref{lem:generaldecomposition} gives \eqref{eq:Jacobi Wronkian decomposition}. The piecewise inequality follows from the positivity of  $W\left(P_n^{(\alpha+1,\beta+1)},P_{n+1}^{(\alpha+1,\beta+1)}\right)(x)$ for $x\in\mathbb{R}.$ The last inequality then follows by
	inserting the above choices of $G_{n+1}$, $Q_n$,
	$P_n$, $A$, and $B$ into \eqref{eq:pos-bound}.

\end{proof}
 
\subsection{Stieltjes representation and complete monotonicity}\label{Stieltjes}

The partial fraction representation obtained in the proof of  Lemma \ref{lem:Wronskiangeneral}, which follows from the completed
interlacing established in Theorem \ref{MainTheorem3}, also provides an analytic interpretation of the augmented Jacobi quotient.

\begin{corollary}	\label{cor:JacobiStieltjesCM}
	Let $\alpha,\beta>-1$, $n\ge1$, and let $E_1,E_2\in(-1,1)$
	be the completion points given by \eqref{eq:E1} and \eqref{eq:E2}. Assume that $P_n^{(\alpha,\beta)}$ and $P_{n+1}^{(\alpha+1,\beta+1)}$  have no common zeros. Let $	z_1<z_2<\cdots<z_{n+2}$	be the zeros of $	(z-E_1)(z-E_2)P_n^{(\alpha,\beta)}(z).$	Then, for $z\in \mathbb{C}\setminus\{z_1,z_2,...,z_{n+2}\}$, the quotient 	admits the Stieltjes representation
	\begin{equation}	\label{eq:JacobiStieltjesRepresentation}
		\mathcal R_n(z)	:=	\frac{	P_{n+1}^{(\alpha+1,\beta+1)}(z)	}{
			(z-E_1)(z-E_2)P_n^{(\alpha,\beta)}(z)}=	\int_{\mathbb R}\frac{d\mu_n(t)}{z-t},
	\end{equation}
	where $	\mu_n	:=
	\sum_{j=1}^{n+2}R_j\delta_{z_j}$
	is a  probability measure, with
	\begin{equation}	\label{eq:JacobiStieltjesWeights}
		R_j	=	\frac{	P_{n+1}^{(\alpha+1,\beta+1)}(z_j)}{	\left.\displaystyle\frac{d}{dz}
			\left[	(z-E_1)(z-E_2)P_n^{(\alpha,\beta)}(z)\right]
			\right|_{z=z_j}	}>0,
		\qquad
		j=1,\ldots,n+2.
	\end{equation}
	Moreover, $\mathcal{R}_n(x)$ is completely monotone on $(z_{n+2},\infty)$.
	In particular, the quotient $\mathcal{R}_n$ is completely monotone on $(1,\infty)$.
\end{corollary}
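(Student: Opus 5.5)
The plan is to reuse the partial fraction machinery from the proof of Lemma~\ref{lem:Wronskiangeneral}, now in the Jacobi setting. By Corollary~\ref{Interlace-Jacobi-both-shifted-degree-parameter}, which applies because there are no common zeros, the monic polynomial $F_{n+2}(z)=(z-E_1)(z-E_2)P_n^{(\alpha,\beta)}(z)$ has $n+2$ real simple zeros $z_1<\cdots<z_{n+2}$. These strictly interlace the zeros of the monic polynomial $G_{n+1}=P_{n+1}^{(\alpha+1,\beta+1)}$, in the sense $F_{n+2}\prec G_{n+1}$.

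Since $\deg G_{n+1}<\deg F_{n+2}$ and the zeros are simple, the partial fraction decomposition \eqref{lmeq1:pfd} holds as an identity of rational functions. It is therefore valid for every complex $z\notin\{z_j\}$, not only for real $x$:
\[
\mathcal R_n(z)=\sum_{j=1}^{n+2}\frac{R_j}{z-z_j}=\int_{\mathbb R}\frac{d\mu_n(t)}{z-t},
\]
with $R_j$ as in \eqref{eq:JacobiStieltjesWeights}. The sign argument of Lemma~\ref{lem:Wronskiangeneral} gives $\operatorname{sign}G_{n+1}(z_j)=\operatorname{sign}F_{n+2}'(z_j)=(-1)^{n+2-j}$, hence $R_j>0$. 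As in the proof of Theorem~\ref{thm:general}, the limit $\lim_{x\to\infty} x\,\mathcal R_n(x)=1$ holds because both polynomials are monic with degrees differing by one, and this yields $\sum_j R_j=1$. So $\mu_n$ is a probability measure.

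For complete monotonicity on $(z_{n+2},\infty)$, differentiate termwise:
\[
(-1)^k\mathcal R_n^{(k)}(x)=\sum_{j=1}^{n+2}\frac{k!\,R_j}{(x-z_j)^{k+1}},
\]
and every term is positive because $x-z_j>0$ and $R_j>0$. Equivalently, one can write $\frac{1}{x-t}=\int_0^\infty e^{-s(x-t)}ds$ to express $\mathcal R_n$ as a Laplace transform of a positive measure, then apply Bernstein's theorem.

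The last claim needs $z_{n+2}<1$, i.e.\ all zeros of $F_{n+2}$ must lie in $(-1,1)$. Lemma~\ref{LemmaLocationE1E2} gives $E_1,E_2\in(-1,1)$. The zeros of $P_n^{(\alpha,\beta)}$ lie in $(-1,1)$ by orthogonality on $(-1,1)$. Hence $(1,\infty)\subset(z_{n+2},\infty)$, and complete monotonicity restricts to this subinterval. I expect no real obstacle. The only points needing care are justifying that the real partial fraction identity extends to complex $z$, which holds because it is an identity of rational functions, and checking this containment $z_{n+2}<1$.
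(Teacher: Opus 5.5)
Your proposal is correct and follows essentially the same route as the paper: the completed interlacing from Corollary~\ref{Interlace-Jacobi-both-shifted-degree-parameter} gives the partial fraction expansion with positive residues $R_j$, the monic degree-difference limit gives $\sum_j R_j=1$, and termwise differentiation gives complete monotonicity on $(z_{n+2},\infty)$. Your explicit justification of $z_{n+2}<1$, using Lemma~\ref{LemmaLocationE1E2} together with the location of the Jacobi zeros in $(-1,1)$, supplies a step that the paper only asserts.
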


\begin{proof} We proved in Corollary \ref{Interlace-Jacobi-both-shifted-degree-parameter} that  $(x - E_1)(x - E_2) P^{(\alpha, \beta)}_n(x)\prec P^{(\alpha+1, \beta+1)}_{n+1}(x)$, where $E_1$ and $E_2$ are given in \eqref{eq:E1} and \eqref{eq:E2}. 
	Now, by applying the partial fraction decomposition obtained in the proof of
	Lemma~\ref{lem:Wronskiangeneral} to the Jacobi pair
	\begin{equation*}
		 P_n(x)=P_n^{(\alpha,\beta)}(x),
		\qquad
		 G_{n+1}(x)=P_{n+1}^{(\alpha+1,\beta+1)}(x),
	\end{equation*}
	we obtain
	\begin{equation}\label{Rn:PartialfractionJacobiquotient}
		\mathcal R_n(z):=	\frac{	P_{n+1}^{(\alpha+1,\beta+1)}(z)	}{
			(z-E_1)(z-E_2)P_n^{(\alpha,\beta)}(z)}	=
		\sum_{j=1}^{n+2}\frac{R_j}{z-z_j},
		\qquad
		R_j>0.
	\end{equation}
	This proves \eqref{eq:JacobiStieltjesRepresentation}, where $	\mu_n
	:=
	\sum_{j=1}^{n+2}R_j\delta_{z_j}$ is a probability measure. Note that the verification that $\sum_{j=1}^{n+2} R_j = 1$ follows by the same argument used to prove the corresponding summation identity in Theorem \ref{thm:general}.
	Now,
	differentiating  \eqref{Rn:PartialfractionJacobiquotient}, $m$ times with respect to $x$ gives
	\begin{equation*}
		\mathcal R_n^{(m)}(x)	=	(-1)^m m!	\sum_{j=1}^{n+2}
		\frac{R_j}{(x-z_j)^{m+1}} \implies	(-1)^m\mathcal R_n^{(m)}(x)
		=
		m!
		\sum_{j=1}^{n+2}
		\frac{R_j}{(x-z_j)^{m+1}}
		>0,~ \text{for} ~x\in (z_{n+2},\infty), 
	\end{equation*}
	This proves that  $\mathcal{R}_n(x)$ is completely monotone on $(z_{n+2},\infty)$. Since $z_{n+2}<1$,	the conclusion on $(1,\infty)$ follows immediately.
\end{proof}

\begin{remark}
	We can also translate the  partial fraction representation
	\eqref{Rn:PartialfractionJacobiquotient}  to the
	half-line. Indeed, if
	\begin{equation*}
		\widetilde{\mathcal{R}}_n(s)
		:=
		\mathcal{R}_n(z_{n+2}+s),
		\qquad s>0,
	\end{equation*}
	then
	\begin{equation*}
		\widetilde{\mathcal{R}}_n(s)
		=
		\frac{R_{n+2}}{s}
		+
		\sum_{j=1}^{n+1}
		\frac{R_j}{s+(z_{n+2}-z_j)}.
	\end{equation*}
	Since $R_j>0$ and $z_{n+2}-z_j>0$ for $j=1,\ldots,n+1$,
	$\widetilde{\mathcal{R}}_n$ is a Stieltjes function on $(0,\infty)$.
	For background on Stieltjes functions and their real-variable
	characterizations, see~\cite{SokalExpomath2010}.
\end{remark}
Furthermore, the complete monotonicity of the quotient $\mathcal{R}_n$ yields higher-order derivative inequalities and the following geometric properties. For every integer $m\geq0$ and every $x>z_{n+2}$,	define
\begin{equation*}
	M_m(x)
	:=
	\sum_{j=1}^{n+2}
	\frac{R_j}{(x-z_j)^{m+1}}. \end{equation*}
    Then $ \displaystyle{M_m(x)
	=
	\frac{(-1)^m}{m!}\mathcal R_n^{(m)}(x)} $. Since $R_j>0$ and $x-z_j>0$, the Cauchy-Schwarz inequality gives
\begin{equation*}
	M_m(x)M_{m+2}(x)
	>
	M_{m+1}(x)^2.
\end{equation*} Indeed, equality would require the quantities $\displaystyle\frac{\sqrt{R_j}}{(x-z_j)^{(m+1)/2}}$ and $\displaystyle\frac{\sqrt{R_j}}{(x-z_j)^{(m+3)/2}}$ to be proportional in $j$, which would force $x-z_j$ to be independent of $j$ and this would contradict the fact that the $z_j$ are distinct. Hence,	we obtain 	\begin{equation}
	\label{eq:JacobiHigherDerivativeInequality}
	\mathcal R_n^{(m)}(x)\mathcal R_n^{(m+2)}(x)
	>
	\frac{m+2}{m+1}
	\left(\mathcal R_n^{(m+1)}(x)\right)^2.
\end{equation}
In particular, for
$m=0$,
\begin{equation}\label{logconvex-Rn}
	\mathcal R_n(x)\mathcal R_n''(x)
	>
	2\left(\mathcal R_n'(x)\right)^2>
	\left(\mathcal R_n'(x)\right)^2,
	\qquad
	x>z_{n+2}.
\end{equation}
So $\mathcal R_n$ is strictly log-convex on $(z_{n+2},\infty)$. Further, by \eqref{logconvex-Rn}, we easily see that the reciprocal function $(\mathcal{R}_n(x))^{-1}$ is strictly concave on $(z_{n+2},\infty)$.

	\section{Meixner-Pollaczek polynomials}\label{Meixner-Pollaczek polynomials}
	
	The monic Meixner-Pollaczek polynomials \cite[Section 9.7]{Koekoek-Book-HypergeometricOP}, denoted by $P_n^{(\lambda)}(x; \phi)$, are defined by
	\begin{align}
		P_n^{(\lambda)}(x;\phi) &= i^n (2\lambda)_n \left( \frac{e^{2i\phi}}{e^{2i\phi} - 1} \right)^n \,_2F_1\left( -n, \lambda + i x; 2\lambda; 1 - \frac{1}{e^{2i\phi}} \right)
	\end{align}
	
	These polynomials are orthogonal with respect  to $d\mu(x) = |\Gamma(\lambda + i x)|^2 e^{(2\phi - \pi)x}dx$ on  $(-\infty, \infty)$, where $n \in \mathbb{N}$, $\lambda > 0$, and $0 < \phi < \pi$. Here, $\Gamma(z)$ represents the Gamma function
	\begin{align*}
		\Gamma(z) &= \int_0^\infty t^{z-1} e^{-t} \, dt, \qquad \Re(z)>0
	\end{align*} and  the Gauss hypergeometric series $_2F_1 \left( a, b; c; z \right)$ is defined as
	\begin{align}\label{Gauss Hypergeometric series}
		_2F_1\left( a, b; c; z \right)&= \sum_{k=0}^{\infty} \frac{(a)_k (b)_k}{(c)_k} \frac{z^k}{k!}.
	\end{align}
	The Pochhammer symbol $(a)_n$ is defined as follows:
	\begin{align*}
		(a)_0=1, \qquad (a)_n &= a(a+1)(a+2)\cdots(a+n-1)\quad \text{for}~ n \geq 1.
	\end{align*}
	
	\begin{lemma}
		Let $n \in \mathbb{N}$, $\lambda > 0$, and $\phi \in (0,\pi)$. Then the polynomial
		\begin{align}\label{Quadraticterm-involve-interlacing}
			T_{n,\lambda,\phi}(x)=(1-\cos2\phi)x^2+(n+1)\sin(2\phi) x+ \lambda^2-(\lambda^2+n\lambda+\lambda)\cos2\phi,
		\end{align}
		has two distinct  real zeros if and only if
		\begin{align}\label{condition-phi-positive-discriminant}
			\phi \in \left(0, \frac{1}{2}\arccos\left(\frac{2\lambda-n-1}{2\lambda+n+1}\right)\right)\cup \left(\pi-\frac{1}{2}\arccos\left(\frac{2\lambda-n-1}{2\lambda+n+1}\right), \pi \right).
		\end{align}
	\end{lemma}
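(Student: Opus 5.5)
The plan is to compute the discriminant of $T_{n,\lambda,\phi}$ directly and reduce the condition "discriminant $>0$" to a linear inequality in $u=\cos 2\phi$.

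First, I check that $T_{n,\lambda,\phi}$ is a genuine quadratic. Its leading coefficient is $1-\cos 2\phi=2\sin^2\phi$, which is strictly positive for $\phi\in(0,\pi)$. Hence $T_{n,\lambda,\phi}$ has two distinct real zeros if and only if its discriminant is positive.

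Next, I compute the discriminant. I write $b=(n+1)\sin 2\phi=2(n+1)\sin\phi\cos\phi$ and $c=\lambda^2-\lambda(\lambda+n+1)\cos 2\phi$. Then
\[
b^2-4ac=4\sin^2\phi\Bigl[(n+1)^2\cos^2\phi-2\lambda^2+2\lambda(\lambda+n+1)\cos 2\phi\Bigr].
\]
Since $4\sin^2\phi>0$, only the sign of the bracket matters. Substituting $\cos^2\phi=(1+u)/2$ with $u=\cos 2\phi$ and collecting terms in $u$, the bracket becomes
\[
\tfrac12\Bigl[(2\lambda+n+1)^2\,u+(n+1-2\lambda)(n+1+2\lambda)\Bigr].
\]
This step uses $(n+1)^2/2+2\lambda^2+2\lambda(n+1)=(2\lambda+n+1)^2/2$ and $(n+1)^2/2-2\lambda^2=\tfrac12(n+1-2\lambda)(n+1+2\lambda)$. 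Dividing by $(2\lambda+n+1)^2>0$, positivity of the discriminant is equivalent to
\[
\cos 2\phi> r,\qquad r:=\frac{2\lambda-n-1}{2\lambda+n+1}.
\]

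Finally, I translate this into the stated condition on $\phi$. Since $\lambda>0$ and $n\ge1$, we have $|2\lambda-n-1|<2\lambda+n+1$, so $r\in(-1,1)$ and $\theta_0:=\arccos r\in(0,\pi)$ is well defined. As $\phi$ ranges over $(0,\pi)$, the angle $2\phi$ ranges over $(0,2\pi)$. On that range, $\cos 2\phi>r$ holds exactly when $2\phi\in(0,\theta_0)\cup(2\pi-\theta_0,2\pi)$. Halving gives precisely \eqref{condition-phi-positive-discriminant}.

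The only step needing care is the algebraic regrouping of the bracket into the factored linear form in $u$. Once that is verified, the remaining steps are immediate.
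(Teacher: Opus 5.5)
Your proof is correct and follows essentially the same route as the paper: you compute the discriminant, reduce its positivity to $\cos 2\phi > \frac{2\lambda-n-1}{2\lambda+n+1}$, and translate this into the stated $\phi$-intervals using $2\phi\in(0,2\pi)$. The only cosmetic difference is that you pull out the factor $4\sin^2\phi = 2(1-\cos 2\phi)$ at the start, which leaves a linear condition in $u=\cos 2\phi$; the paper instead keeps a quadratic in $\cos 2\phi$ and locates its roots $1$ and $\frac{2\lambda-n-1}{2\lambda+n+1}$.
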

	
	\begin{proof}
		Since $\phi\in(0,\pi)$, the coefficient $1-\cos 2\phi$ of $x^2$ is non-zero. Hence $T_{n,\lambda,\phi}(x)$ is quadratic in $x$ and its zeros are real and distinct precisely when its discriminant is positive.  Denoting by
		$D := \mathrm{Disc}_x\!\left(T_{n,\lambda,\phi}\right)$
		the discriminant of $T_{n,\lambda,\phi}(x)$, we have
		\begin{align}\label{Discriminant}
			D=(n+1)^2\sin^2(2\phi)-4(1-\cos2\phi)(\lambda^2-(\lambda^2+n\lambda+\lambda)\cos2\phi).
		\end{align}
		The zeros of $T_{n,\lambda,\phi}(x)$ are 
		\begin{align*}
			x=\frac{-(n+1)\sin2\phi \pm \sqrt{D}}{2(1-\cos2\phi)}.
		\end{align*}
		To simplify \eqref{Discriminant}, we denote $\cos(2\phi)=y, n+1=A$ and $B=\lambda^2+n \lambda+\lambda=\lambda^2+A\lambda$. We can write \eqref{Discriminant} as
		\begin{align}
			\nonumber	D&=A^2(1-y^2)-4(1-y)(\lambda^2-By)\\
			\nonumber	&=-(A+2\lambda)^2y^2+4\lambda(A+2\lambda)y+A^2-4\lambda^2\\
			\nonumber	&=-(A+2\lambda)f(y)
		\end{align}
        where $f(y):= (A+2\lambda)y^2-4\lambda y-(A-2\lambda)$.	 Then $D>0$ is equivalent to $f(y)<0$. The polynomial $f(y)$ vanishes at
		\begin{align*}
			y &= \frac{4\lambda \pm \sqrt{(4\lambda)^2 + 4(A^2 - 4\lambda^2)}}{2(A+2\lambda)}
			= \frac{2\lambda \pm A}{A+2\lambda}.
		\end{align*}
		Thus, the zeros of $f(y)$ are $y=1$ and
		$y=1-\frac{2A}{A+2\lambda}<1$.
		Since $A+2\lambda>0$, the quadratic $f(y)$ opens upward, and hence
		$f(y)<0$ for $	y \in \left(\frac{2\lambda-A}{A+2\lambda},\,1\right).$
		Consequently, $D>0$ if and only if
		\begin{align*}
			\frac{2\lambda-n-1}{2\lambda+n+1}
			< \cos 2\phi < 1.
		\end{align*}
		Since  $\arccos$ is decreasing on $(-1,1)$, it follows that
		\begin{align*}
			0 < \arccos(\cos 2\phi)
			< \arccos\!\left(\frac{2\lambda-n-1}{2\lambda+n+1}\right).
		\end{align*}
		Using the identity
		\begin{align*}
			\arccos(\cos2\phi) =
			\begin{cases}
				2\phi, & \text{if } \phi \in (0,\frac{\pi}{2}) \\
				2\pi - 2\phi, & \text{if } \phi \in (\frac{\pi}{2}, \pi)
			\end{cases},
		\end{align*} we obtain  \eqref{condition-phi-positive-discriminant}. Also, the polynomial $T_{n,\lambda,\phi}(x)$ can be factorized as
		\begin{align}\label{factorised-Quadraticterm-involve-interlacing}
			T_{n,\lambda,\phi}(x)=(1-\cos2\phi)(x-E_1)(x-E_2),
		\end{align}where 
		\begin{align}\label{Factor1-responsible-Interlacing}
			E_1:=E_{n,\lambda,\phi}=\frac{-(n+1)\sin2\phi \,- (n+1+2\lambda)\sqrt{(1-\cos2\phi)\left(\cos2\phi-\frac{2\lambda-n-1}{2\lambda+n+1}\right)}}{2(1-\cos2\phi)},
		\end{align} and 
		\begin{align}\label{Factor2-responsible-Interlacing}
			E_2:=	F_{n,\lambda,\phi}=\frac{-(n+1)\sin2\phi \,+ (n+1+2\lambda)\sqrt{(1-\cos2\phi)\left(\cos2\phi-\frac{2\lambda-n-1}{2\lambda+n+1}\right)}}{2(1-\cos2\phi)}
		\end{align} with $E_1<E_2$.
		This completes the proof. 
	\end{proof}
	
	It is well known that, for any $n \in \mathbb{N}$, $\lambda > 0$, and
	$\phi \in (0,\pi)$, $P_{n+1}^{(\lambda)}(x;\phi)\prec P_{n}^{(\lambda)}(x;\phi)$. In contrast, such an
	interlacing property does not generally hold for the zeros of
	$P_{n}^{(\lambda)}(x;\phi)$ and $P_{n+1}^{(\lambda+1)}(x;\phi)$.
	Nevertheless, it was shown in \cite[Lemma~3.3]{Jooste-Jordaan-Numeralg-2025}
	that these zeros interlace in the special case $\phi = \pi/2$. An interesting open problem is whether suitable conditions on the
	parameters can be imposed to obtain either full or partial interlacing
	between the zeros of $P_{n}^{(\lambda)}(x;\phi)$ and
	$P_{n+1}^{(\lambda+1)}(x;\phi)$. In the following result, we demonstrate
	that partial interlacing can indeed be achieved by restricting $\phi$ to
	a range depending on $n \in \mathbb{N}$ and $\lambda > 0$. A key role in
	this analysis is played by the location of the quantities
	$E_1 := E_{n,\lambda,\phi}$ and $E_2 := F_{n,\lambda,\phi}$, defined in
	\eqref{Factor1-responsible-Interlacing} and
	\eqref{Factor2-responsible-Interlacing}, respectively.

	\begin{corollary}\label{MP-Interlacing-two-points}
		Let $n \in \mathbb{N}$, $\lambda > 0$, and $\phi$ satisfy \eqref{condition-phi-positive-discriminant}.
		Let $\{z_{k,n}\}_{k=1}^{n}$ and $\{y_{k,n+1}\}_{k=1}^{n+1}$ denote the zeros of the polynomials $P_n^{(\lambda)}(x;\phi)$, and $P_{n+1}^{(\lambda+1)}(x;\phi)$,  respectively. Assume that $P_{n}^{(\lambda)}(x;\phi)$ and $P_{n+1}^{(\lambda+1)}(x;\phi)$ have no common zero. Then the configuration $E_1<y_{1,n+1}$ and $E_2>y_{n+1,n+1}$ is impossible, where  $E_1$ and $E_2$ are given in \eqref{Factor1-responsible-Interlacing} and \eqref{Factor2-responsible-Interlacing}. Also,  $P_{n+1}^{(\lambda+1)}(x;\phi)\prec P_{n}^{(\lambda)}(x;\phi)$  whenever $E_1$ and $E_2$ lie simultaneously in one of the following intervals: $(-\infty,y_{1,n+1})$, $(y_{k,n+1},y_{k+1,n+1})$ for some fixed $k$, or $(y_{n+1,n+1},\infty)$. 	Moreover, the following interlacing properties hold:
		
		\begin{enumerate}[label=\alph*)]
			\item 
			If $E_1<y_{1,n+1}$ and $y_{k,n+1}<E_2<y_{k+1,n+1}$ for some $k\in\{1,\dots,n\}$, then  $(x-E_2)P_{n}^{(\lambda)}(x;\phi)\prec P_{n+1}^{(\lambda+1)}(x;\phi)$.
			
			\item 
			If $E_2>y_{n+1,n+1}$ and $y_{k,n+1}<E_1<y_{k+1,n+1}$ for some $k\in\{1,\dots,n\}$, then  $P_{n+1}^{(\lambda+1)}(x;\phi) \prec (x-E_1)P_{n}^{(\lambda)}(x;\phi)$.
			
			\item 
			If there exist distinct $k,k'\in\{1,\dots,n\}$ such that $y_{k,n+1} < E_1 < y_{k+1,n+1}$ and
			$y_{k',n+1} < E_2 < y_{k'+1,n+1},
			$  then, for $l\in\{1,\dots,n-1\}$, \begin{itemize}\item[(i)]   $(x-E_1)P_{n+1}^{(\lambda+1)}(x;\phi)\prec (x-E_2)P_{n}^{(\lambda)}(x;\phi)$ whenever $
				y_{k,n+1} < z_{l,n} < E_1 < z_{l+1,n} < y_{k+1,n+1}
				$,
				\item[(ii)] $(x-E_2)P_{n+1}^{(\lambda+1)}(x;\phi)\prec (x-E_1)P_{n}^{(\lambda)}(x;\phi)$  whenever 
				$y_{k',n+1} < z_{l,n} < E_2 < z_{l+1,n} < y_{k'+1,n+1}$,  \end{itemize}
		\end{enumerate}
	\end{corollary}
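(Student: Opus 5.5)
The plan is to reduce the corollary to Theorem~\ref{MainTheorem4}, exactly as Corollary~\ref{Interlace-Jacobi-both-shifted-degree-parameter} was reduced to Theorem~\ref{MainTheorem3}. Set $P_n(x):=P_n^{(\lambda)}(x;\phi)$, $G_{n+1}(x):=P_{n+1}^{(\lambda+1)}(x;\phi)$ and $Q_n(x):=P_n^{(\lambda+1)}(x;\phi)$. These are monic, and their zeros are real and simple by orthogonality. Moreover $G_{n+1}\prec Q_n$ holds by the classical interlacing of consecutive-degree Meixner--Pollaczek polynomials with the same parameters. The whole task is therefore to produce a mixed recurrence of the form \eqref{GeneralMixedTTRR4},
\begin{equation*}
A(x)P_n(x)=B(x)G_{n+1}(x)+(x-E_1)(x-E_2)Q_n(x),\qquad A(x)>0,
\end{equation*}
with the plus sign and with $E_1,E_2$ given by \eqref{Factor1-responsible-Interlacing}--\eqref{Factor2-responsible-Interlacing}.

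To obtain this relation, start from a known contiguous relation expressing $P_n^{(\lambda)}$ through $P_n^{(\lambda+1)}$ and $P_{n-1}^{(\lambda+1)}$. This is the analogue of \eqref{Mixed-TTRR-Monic-Jacobi-DJ-2018-eq8}; it is available from the $\lambda$-lowering structure, e.g.\ in \cite{Jooste-Jordaan-Numeralg-2025} or \cite{Koekoek-Book-HypergeometricOP}. Then use the monic three-term recurrence for the parameter $\lambda+1$,
\begin{equation*}
P_{n+1}^{(\lambda+1)}=\Bigl(x+\tfrac{(n+\lambda+1)\cos\phi}{\sin\phi}\Bigr)P_n^{(\lambda+1)}-\tfrac{n(n+2\lambda+1)}{4\sin^2\phi}P_{n-1}^{(\lambda+1)},
\end{equation*}
solve it for $P_{n-1}^{(\lambda+1)}$, and substitute. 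Clearing denominators, the coefficient of $P_n^{(\lambda+1)}$ becomes a quadratic in $x$. Multiplying through by a positive factor such as $4\sin^2\phi=2(1-\cos2\phi)$, I expect that quadratic to equal $T_{n,\lambda,\phi}(x)$ in \eqref{Quadraticterm-involve-interlacing}, up to a positive constant. Then \eqref{factorised-Quadraticterm-involve-interlacing} gives the factor $(x-E_1)(x-E_2)$. Condition \eqref{condition-phi-positive-discriminant} guarantees that $E_1<E_2$ are real and distinct.

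This algebra is the main obstacle, and it has three parts. First, the sign in front of the quadratic must come out as $+$, matching Theorem~\ref{MainTheorem4} rather than Theorem~\ref{MainTheorem3}; the conclusions listed in the corollary (impossibility of the straddling configuration, and parts a)--c)) are exactly those of Theorem~\ref{MainTheorem4}, so I expect it does. Second, the coefficient $A$ of $P_n^{(\lambda)}$ must be a positive constant, which should follow from $\lambda>0$ and $\sin\phi>0$. Third, $B$ must be linear, say $B(x)=x-b$. If the sign of $A$ comes out negative, multiply the whole identity by $-1$ and recheck which theorem applies.

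It remains to check $B(E_i)\neq0$. As in the Jacobi case, evaluate the quadratic at the zero $x=b$ of $B$. Before factoring, the quadratic has the form $B(x)\cdot(\text{linear})-c$ with $c$ a positive multiple of $\lambda_n^{(\lambda+1)}$, so its value at $b$ is $-c\neq0$. Coprimality is the stated no-common-zero hypothesis. Theorem~\ref{MainTheorem4}(a)--(e) then yields, in order, the impossibility claim, the same-interval case $G_{n+1}\prec P_n$, and items a), b), c)(i), c)(ii) of the corollary.
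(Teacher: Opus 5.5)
Your proposal is correct and follows the paper's route. The paper quotes the mixed relation from \cite{Jooste-Jordaan-Numeralg-2025}, eq.~(3.3), and then applies Theorem~\ref{MainTheorem4}; your derivation reproduces that relation exactly, since the contiguous relation is $(n+2\lambda)P_n^{(\lambda)}=2\lambda P_n^{(\lambda+1)}+n(x-\lambda\cot\phi)P_{n-1}^{(\lambda+1)}$, which gives $A=\frac{(2\lambda+n)(2\lambda+n+1)}{2(1-\cos2\phi)}>0$, $B(x)=-(x-\lambda\cot\phi)$ and the $+$ sign. One small correction: at the zero of $B$ the quadratic equals $T_{n,\lambda,\phi}(\lambda\cot\phi)=\lambda(n+2\lambda+1)>0$, so it is $+c$ rather than $-c$ (opposite to the Jacobi case); it is nonzero because $\lambda>0$, so $B(E_i)\neq0$ still holds.
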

	
	\begin{proof}
		Monic Meixner-Pollaczek polynomials satisfy the following mixed three-term recurrence relation (cf. ~\cite[eq. ~(3.3)]{Jooste-Jordaan-Numeralg-2025})
		\begin{align}
			P_{n}^{(\lambda)}(x;\phi) =  - \frac{4 \sin^2 \phi \,(x - \lambda \cot \phi)}{(2 \lambda + n)(2 \lambda + n + 1)}\, P_{n+1}^{(\lambda+1)}(x;\phi)+
			\frac{2T_{n,\lambda,\phi}(x)}{(2 \lambda + n)(2 \lambda + n + 1)}\, P_{n}^{(\lambda+1)}(x;\phi),
		\end{align}
		where $T_{n,\lambda,\phi}(x)$ is given in \eqref{Quadraticterm-involve-interlacing}. Now using \eqref{factorised-Quadraticterm-involve-interlacing}, we can rewrite the above equation as 
		\begin{align}
			\nonumber	\frac{(2 \lambda + n)(2 \lambda + n + 1)}{2(1-\cos2\phi)}	P_{n}^{(\lambda)}(x;\phi) &=  - \frac{2 \sin^2 \phi \,(x - \lambda \cot \phi)}{(1-\cos2\phi)}\, P_{n+1}^{(\lambda+1)}(x;\phi)\\
			&\hspace{3cm}+ (x-E_1)(x-E_2)
			P_{n}^{(\lambda+1)}(x;\phi),
		\end{align}
		where $E_1$ and $E_2$ are given in \eqref{Factor1-responsible-Interlacing} and \eqref{Factor2-responsible-Interlacing}. Define
		$G_{n+1}(x) := P_{n+1}^{(\lambda+1)}(x;\phi)$,
		$Q_{n}(x) := P_{n}^{(\lambda+1)}(x;\phi)$,
		and
		$P_{n}(x) := P_{n}^{(\lambda)}(x;\phi)$.
		Further, set
		$A(x) := \frac{(2 \lambda + n)(2 \lambda + n + 1)}{2(1-\cos 2\phi)} > 0$
		and
		$B(x) := -\frac{2 \sin^2 \phi \,(x - \lambda \cot \phi)}{1-\cos 2\phi}$. The range of $\phi$ ensures that the quantities $E_1$ and $E_2$, defined in
		\eqref{Factor1-responsible-Interlacing} and
		\eqref{Factor2-responsible-Interlacing}, are real numbers.
		It follows from \eqref{factorised-Quadraticterm-involve-interlacing} that
		$B(E_1) \neq 0$ and $B(E_2) \neq 0$, since $B(x)$ vanishes only at $\lambda \cot\phi$ and a direct calculation shows that $T_{n,\lambda,\phi}(\lambda\cot\phi)=\lambda(n+1+2\lambda)>0$.
		Moreover, for any $n \in \mathbb{N}$, $\lambda > 0$, and $\phi \in (0,\pi)$,
		the zeros of $P_{n}^{(\lambda+1)}(x;\phi)$ and $P_{n+1}^{(\lambda+1)}(x;\phi)$
		interlace. Consequently, all the hypotheses of Theorem~\ref{MainTheorem4}
		are satisfied, and the result follows directly from Theorem~\ref{MainTheorem4}.
	\end{proof}

	\begin{table}[ht]
		\centering
		\caption{Zeros of $P_{n}^{(\lambda)}(x;\phi)$ (denoted by $z_{k,n}$) and $P_{n+1}^{(\lambda+1)}(x;\phi)$ (denoted by $y_{k,n+1}$), where $E_1$ and $E_2$ are defined in  \eqref{Factor1-responsible-Interlacing} and \eqref{Factor2-responsible-Interlacing}, respectively, and $\phi$ satisfies \eqref{condition-phi-positive-discriminant}. }
		\label{tab2:Interlacingzeros1_MeixnerPoll}
		\footnotesize 
		\setlength{\tabcolsep}{12pt} 
		\begin{tabular}{@{}c cc | cc@{}}
			\toprule
			
			\addlinespace[2pt]
			& \multicolumn{2}{c|}{$n=6, \lambda=0.12, \phi=\frac{7\pi}{9}$} & \multicolumn{2}{c}{$n=7, \lambda=2, \phi=\frac{\pi}{4}$} \\
			& \multicolumn{2}{c|}{$E_1 = -0.019388, E_2=8.36166$} & \multicolumn{2}{c}{$E_1=-7.4641, E_2=-0.535898$} \\
			\midrule
			$k$ & $z_{k,6}$ & $y_{k,7}$ & $z_{k,7}$ & $y_{k,8}$ \\
			\midrule
			
			1 & $-0.495851$ & $-0.94902$ & $-14.3009$ & $-18.3133$ \\
			2 & $0.0628661$ & $0.28911$ & $-9.62676$ & $-13.1392$ \\
			3 & $0.995837$ & $1.6211$ & $-6.32237$ & $-9.37474$ \\
			4 & $2.81056$ & $3.48124$ & $-3.81923$ & $-6.41755$ \\
			5 & $5.56002$ & $6.0181$ & $-1.88168$ & $-4.0225$ \\
			6 & $9.80094$ & $9.4672$ & $-0.317583$ & $-2.03526$ \\
			7 & $-$ & $14.4424$ & $1.26853$ & $-0.281821$ \\
			8 & $-$ & $-$ & $-$ & $1.5844$ \\
			\midrule
			& \multicolumn{2}{c|}{$(x-E_1)P_{n+1}^{(\lambda+1)}(x;\phi)\prec (x-E_2) P_{n}^{(\lambda)}(x;\phi)$} & \multicolumn{2}{c}{$(x-E_2)P_{n+1}^{(\lambda+1)}(x;\phi)\prec (x-E_1) P_{n}^{(\lambda)}(x;\phi)$} \\
			\bottomrule
			
		\end{tabular}
		
		\vspace{0.2cm}
		\begin{minipage}{0.9\textwidth}
			\footnotesize
			\textbf{Note:} Observe that $y_{1,7}<z_{1,6}<E_1 < z_{2,6}<y_{2,7}$ and $y_{5,7}<E_2 < y_{6,7}$ for $n=6, \lambda=0.12, \phi=\frac{7\pi}{9}$; similarly, $y_{3,8}<E_1<y_{4,8}$ and $y_{6,8}<z_{5,7}<E_2 < z_{6,7}<y_{7,8}$ for $n=7, \lambda=2, \phi=\frac{\pi}{4}$.
		\end{minipage}
	\end{table}	

    \begin{table}[H]
		\centering
		\caption{Zeros of $P_{n}^{(\lambda)}(x;\phi)$ (denoted by $z_{k,n}$) and $P_{n+1}^{(\lambda+1)}(x;\phi)$ (denoted by $y_{k,n+1}$), where $E_1$ and $E_2$ are defined in  \eqref{Factor1-responsible-Interlacing} and \eqref{Factor2-responsible-Interlacing}, respectively, and $\phi$ satisfies \eqref{condition-phi-positive-discriminant}. }
		\label{tab3:Interlacingzeros2_MeixnerPoll}
		\footnotesize 
		\setlength{\tabcolsep}{12pt} 
		\begin{tabular}{@{}c cc | cc@{}}
			\toprule
			
			\addlinespace[2pt]
			& \multicolumn{2}{c|}{$n=8, \lambda=1.5, \phi=\frac{4\pi}{5}$} & \multicolumn{2}{c}{$n=9, \lambda=6, \phi=\frac{\pi}{5}$} \\
			& \multicolumn{2}{c|}{$E_1 = -0.298549, E_2=12.686$} & \multicolumn{2}{c}{$E_1=-13.062, E_2=-0.701821$} \\
			\midrule
			$k$ & $z_{k,8}$ & $y_{k,9}$ & $z_{k,9}$ & $y_{k,10}$ \\
			\midrule
			
			1  & $-0.952961$ & $-1.10319$ & $-33.3638$  & $-38.5756$    \\
			2  & $0.439051$  & $0.623579$ & $-25.8717$  & $-30.6562$    \\
			3  & $1.91719$   & $2.34762$  & $-20.2451$  & $-24.6537$    \\
			4  & $3.85165$   & $4.38971$  & $-15.6627$  & $-19.7182$    \\
			5  & $6.37288$   & $6.88542$  & $-11.7964$  & $-15.5127$    \\
			6  & $9.62921$   & $9.9474$   & $-8.46192$  & $-11.8529$    \\
			7  & $13.8994$   & $13.736$   & $-5.52321$  & $-8.61351$    \\
			8  & $19.8989$   & $18.5489$  & $-2.83297$  & $-5.68527$    \\
			9  & $-$         & $25.1429$  & $-0.116578$ & $-2.9326$     \\
			10 & $-$         & $-$        & $-$         & $-0.0832518$\\
			\midrule
			& \multicolumn{2}{c|}{$(x-E_1)P_{n+1}^{(\lambda+1)}(x;\phi)\prec (x-E_2) P_{n}^{(\lambda)}(x;\phi)$} & \multicolumn{2}{c}{$(x-E_2)P_{n+1}^{(\lambda+1)}(x;\phi)\prec (x-E_1) P_{n}^{(\lambda)}(x;\phi)$} \\
			\bottomrule
		\end{tabular}
		
		\vspace{0.2cm}
		\begin{minipage}{0.9\textwidth}
			\footnotesize
			\textbf{Note:}  Observe that $y_{1,9}<z_{1,8}<E_1 < z_{2,8}<y_{2,9}$ and $y_{6,9}<E_2 < y_{7,9}$ for $n=8, \lambda=1.5, \phi=\frac{4\pi}{5}$; similarly, $y_{5,10}<E_1<y_{6,10}$ and $y_{9,10}<z_{8,9}<E_2 < z_{9,9}<y_{10,10}$ for $n=9, \lambda=6, \phi=\frac{\pi}{5}$.
		\end{minipage}
	\end{table}	
    
	It is worth noting that in the statement of Corollary \ref{MP-Interlacing-two-points}(c), we include the condition $y_{k,n+1} < z_{l,n} < E_1 < z_{l+1,n} < y_{k+1,n+1}$ to ensure the interlacing result $(x-E_1)P_{n+1}^{(\lambda+1)}(x;\phi)\prec (x-E_2)P_{n}^{(\lambda)}(x;\phi)$. However, our numerical investigations suggest that this condition may not be an independent requirement, but is likely an intrinsic property of the zeros of $P_{n}^{(\lambda)}(x;\phi)$ and $P_{n+1}^{(\lambda+1)}(x;\phi)$.  Numerical experiments across a wide range of parameters for $n$, $\lambda$ and $\phi$ suggest that in every case where $E_1$ and $E_2$ were chosen as $y_{k,n+1} < E_1 < y_{k+1,n+1},$	$y_{k',n+1} < E_2 < y_{k'+1,n+1}$, the zeros of $P_{n}^{(\lambda)}(x;\phi)$ ($z_{l,n}$ and $z_{l+1,n}$), are positioned exactly as needed to satisfy the condition in Corollary \ref{MP-Interlacing-two-points}(c). Essentially, the condition $y_{k,n+1} < z_{l,n} < E_1 < z_{l+1,n} < y_{k+1,n+1}$  seems to be a natural consequence of the polynomial structure rather than a restriction. To illustrate this, we have included Tables \ref{tab2:Interlacingzeros1_MeixnerPoll} and \ref{tab3:Interlacingzeros2_MeixnerPoll}.

Based on these observations, we propose the following conjecture. \vspace{0.3cm}
	
	\textbf{Conjecture 1.}  Let $\{y_{k,n+1}\}_{k=1}^{n+1}$ denote the zeros of $P_{n+1}^{(\lambda+1)}(x;\phi)$.  Let $E_1, E_2$ be as defined in \eqref{Factor1-responsible-Interlacing} and \eqref{Factor2-responsible-Interlacing}, respectively, for $\lambda >0$ and $\phi$ satisfying \eqref{condition-phi-positive-discriminant}. If there exist distinct $k, k' \in \{1, \dots, n\}$ such that $y_{k,n+1} < E_1 < y_{k+1,n+1}$ and $y_{k',n+1} < E_2 < y_{k'+1,n+1}$, then  $\displaystyle (x-E_1)P_{n+1}^{(\lambda+1)}(x;\phi) \prec (x-E_2)P_{n}^{(\lambda)}(x;\phi)$ or $\displaystyle (x-E_2)P_{n+1}^{(\lambda+1)}(x;\phi) \prec (x-E_1)P_{n}^{(\lambda)}(x;\phi)$ holds.
	
	\section{Pseudo-Jacobi polynomials}\label{Pseudo Jacobi polynomials}
	For real parameters $a$ and $b$, the monic Pseudo-Jacobi polynomials, denoted by $P_n(x; a, b)$, are expressed in terms of the hypergeometric function as \cite{Koekoek-Book-HypergeometricOP}:
	\begin{equation}
		P_n(x; a, b) = \frac{2^n(a + ib + 1)_n}{i^n(2a + n + 1)_n} \, {}_2F_1 \left( \left. \begin{matrix} -n, 2a + n + 1 \\ a + ib + 1 \end{matrix} \, \right| \, \frac{1 - ix}{2} \right),
	\end{equation}
	where $n$ is a nonnegative integer. These polynomials satisfy an orthogonality relation on the real line with respect to the measure $d\mu(x) = (1 + x^2)^a e^{2b \arctan x}dx$, provided that $b \in \mathbb{R}$ and $a < -n$ (cf. \cite{Jordaan-Tookos-JAT-2014}). 

	\begin{lemma}
		Let $n\in \mathbb{N}, 	a < -n - \frac{3}{2}$, $a\neq -n-2$, and $b \in \mathbb{R}$. The quadratic polynomial defined by
		\begin{equation}\label{Quadraticpolynomial-Pseudo-Jacobi-Interlacing}
			R_{n,a,b}(x) = x^2 - \frac{b(n+1)}{(a+n+1)(a+n+2)}x + \frac{(a+n+1)(a+n+2)(2a+n+2)+b^2(n+1)}{(a+n+1)(a+n+2)(2a+2n+3)}
		\end{equation}
		has two distinct real zeros if and only if		\begin{equation} \label{condition-a-realzeros}
			a < \frac{-3n - 5 - \sqrt{n^2+2n+5}}{4}~\text{with}~a\neq -n-2,
		\end{equation}
		and 
		\begin{equation} \label{condition-b-realzeros}
			|b| > \sqrt{ \frac{4(a+n+1)^2(a+n+2)^2(-2a-n-2)}{(n+1) \left| 4(a+n+1)^2 - (2n-2)(a+n+1) -(n+1) \right| } }.
		\end{equation}
	\end{lemma}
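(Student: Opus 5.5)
The plan is to compute the discriminant of $R_{n,a,b}$ directly and track the signs of its factors. First I would check that the quadratic is well defined under the standing hypotheses.

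Put $u:=a+n+1$ and $p:=(a+n+1)(a+n+2)=u(u+1)$. The hypothesis $a\neq -n-2$ gives $u\neq -1$, and $a<-n-\tfrac32$ gives $u<-\tfrac12$, so $u\neq 0$. Hence $p\neq0$ and the coefficients of $R_{n,a,b}$ are finite. Also $2a+2n+3=2u+1<0$, and $2a+n+2<-n-1<0$. The sign of $p$ itself is not fixed: it is positive for $a<-n-2$ and negative for $-n-2<a<-n-\tfrac32$. I therefore plan to multiply only by quantities whose sign is known, namely $p^2>0$ and $2u+1<0$.

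The discriminant is
\[
D=\frac{b^2(n+1)^2}{p^2}-\frac{4\bigl[p(2a+n+2)+b^2(n+1)\bigr]}{p\,(2u+1)} .
\]
Multiplying by $p^2(2u+1)$, which is negative, I expect to get
\[
p^2(2u+1)D=b^2(n+1)K-4p^2(2a+n+2),\qquad K:=(n+1)(2u+1)-4u(u+1)=-\bigl[4u^2-(2n-2)u-(n+1)\bigr].
\]
This is exactly the bracket appearing in \eqref{condition-b-realzeros}. So the zeros of $R_{n,a,b}$ are real and distinct if and only if
\[
b^2(n+1)K<4p^2(2a+n+2).
\]

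The right-hand side of this inequality is strictly negative. When $K\ge0$ the left-hand side is nonnegative, so the inequality fails for every $b$, and $D\le 0$. When $K<0$, dividing by $(n+1)K<0$ turns it into
\[
b^2>\frac{4p^2(-2a-n-2)}{(n+1)|K|},
\]
which is \eqref{condition-b-realzeros}. It remains to translate $K<0$ into a condition on $a$. This requires $4u^2-(2n-2)u-(n+1)>0$, a quadratic in $u$ with roots
\[
u_\pm=\frac{(n-1)\pm\sqrt{n^2+2n+5}}{4}.
\]
Since $u_+>0$ while $u<-\tfrac12$, the positivity holds exactly when $u<u_-$. In terms of $a$ this reads $a<\frac{-3n-5-\sqrt{n^2+2n+5}}{4}$, which is \eqref{condition-a-realzeros}.

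Finally I would confirm that this bound is consistent with the standing hypothesis, i.e. that it lies below $-n-\tfrac32$. That is equivalent to $n+1<\sqrt{n^2+2n+5}$, which holds because $(n+1)^2<n^2+2n+5$. The exclusion $a\neq-n-2$ is carried along unchanged. The step where errors are most likely is the sign bookkeeping when clearing denominators: $p$ changes sign at $a=-n-2$, which is why I multiply only by $p^2$ and $2u+1$. Apart from that, the only real work is the algebraic simplification of $K$.
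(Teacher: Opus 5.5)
Your proposal is correct and follows essentially the same route as the paper: substitute $u=a+n+1$, clear the negative denominator $u^2(u+1)^2(2u+1)$ from the discriminant, observe that $2a+n+2<0$ forces $K(u)=-4u^2+(2n-2)u+(n+1)<0$, identify this condition with $u<\frac{n-1-\sqrt{n^2+2n+5}}{4}$, and divide by $(n+1)K<0$ to obtain \eqref{condition-b-realzeros}. Your extra check that this bound lies below $-n-\tfrac32$ is a consistency remark that the paper leaves implicit and that the equivalence does not need.
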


    \begin{proof} 
	To simplify the algebraic manipulation, we denote $	u=a+n+1.$
	Given the range $a<-n-\frac{3}{2}$, the range for $u$ becomes $u<-\frac{1}{2}$. Moreover, since $a\neq -n-2$, we have $u\neq -1$, so the polynomial is well-defined. Now, \eqref{Quadraticpolynomial-Pseudo-Jacobi-Interlacing} reads as
	\begin{align*}
		P(x)	=	x^2	-	\frac{b(n+1)}{u(u+1)}x	+	\frac{u(u+1)(2u-n)+b^2(n+1)}	{u(u+1)(2u+1)}.
	\end{align*}
	The zeros of the quadratic polynomial are real and distinct if and only if the discriminant $\Delta:=\operatorname{Disc}_x(P)$ is strictly positive. We have
	\begin{align}
		\Delta	&=	\left(\frac{b(n+1)}{u(u+1)}\right)^2	-	4\left(
	\frac{u(u+1)(2u-n)+b^2(n+1)}
		{u(u+1)(2u+1)}
		\right) \nonumber	\\&=	\frac{
			b^2(n+1)\left[-4u^2+(2n-2)u+(n+1)\right]-	4u^2(u+1)^2(2u-n)}
		{u^2(u+1)^2(2u+1)}\nonumber \\&
\label{eq:Delta-Ku}
	=\frac{	b^2(n+1)K(u)-4u^2(u+1)^2(2u-n)	}
		{u^2(u+1)^2(2u+1)}
	\end{align}
	where $	K(u):=-4u^2+(2n-2)u+(n+1)$.
	Since $u<-\frac{1}{2}$, the denominator $u^2(u+1)^2(2u+1)$ is negative. Therefore, $\Delta>0$ is equivalent to the condition that  the numerator in \eqref{eq:Delta-Ku} is negative, that is,
	\begin{align}	\label{eqnum_condition}
		b^2(n+1)K(u)-4u^2(u+1)^2(2u-n)
		< 0.
	\end{align}
    Since $u<-\frac{1}{2}$ implies $ 2u-n<0$, \eqref{eqnum_condition} forces $K(u)<0$.
		 We now characterize the condition $K(u)<0$ and rewrite \eqref{eqnum_condition} under this condition. First, the roots of $K(u)=0$ are
	\begin{align*}
		u	=	\frac{n-1\pm\sqrt{n^2+2n+5}}{4}.
	\end{align*}
	Since $K(u)$ is a downward-opening parabola, it is negative outside the interval determined by these two roots. In the range $u<-\frac{1}{2}$, apart from the already imposed condition $u\neq -1$, condition \eqref{condition-a-realzeros} is equivalent to
	\begin{align*}
		u	=	a+n+1	<	\frac{n-1-\sqrt{n^2+2n+5}}{4}.
	\end{align*}
	Hence, condition \eqref{condition-a-realzeros} is equivalent to $K(u)<0$ for $u<-\frac{1}{2}$.
	Therefore, $K(u)=-|K(u)|$.	Next, since $u=a+n+1$, we have
	\begin{align*}
		|K(u)|=	\left|	4(a+n+1)^2	-(2n-2)(a+n+1)-(n+1)\right|.
	\end{align*}
	Therefore, condition \eqref{condition-b-realzeros} can be written in terms of $u$ as
	\begin{align*}
	|b|	>\sqrt{	\frac{4u^2(u+1)^2(n-2u)}	{(n+1)|K(u)|}	}.
	\end{align*}
	Equivalently,
	\begin{align*}
		b^2(n+1)|K(u)|	>	4u^2(u+1)^2(n-2u).
	\end{align*}
	Since $K(u)=-|K(u)|$ and $n-2u=-(2u-n)$, this gives
	\begin{align*}
		-b^2(n+1)K(u)	>	-4u^2(u+1)^2(2u-n).
	\end{align*}
	Equivalently,
	\begin{align*}
		b^2(n+1)K(u)	-	4u^2(u+1)^2(2u-n)	< 0.
	\end{align*}
	Thus, $K(u)<0$ makes \eqref{condition-b-realzeros} and \eqref{eqnum_condition} equivalent. Therefore, \eqref{eqnum_condition}  holds if and only if conditions \eqref{condition-a-realzeros}  and \eqref{condition-b-realzeros} hold. Further, the same equivalence holds for $\Delta>0$. Hence, $R_{n,a,b}$ has distinct real zeros if and only if these conditions hold. This completes the proof. 
\end{proof}

	The quadratic polynomial $R_{n,a,b}(x)$ can be written in factorized form as
	\begin{equation}\label{Factorized-quadratic-PJP}
	R_{n,a,b}(x) = (x - E_{n,a,b})(x - F_{n,a,b}),
	\end{equation}
	where $E_1:=E_{n,a,b}$ and $E_2:=F_{n,a,b}$ are  real and given by
	\begin{equation}\label{PJP-Factor1-responsible-Interlacing}
	E_1 = \frac{b(n+1) - \sqrt{\frac{\mathcal{N}}{2a+2n+3}}}{2(a+n+1)(a+n+2)},
	\end{equation}
	and
	\begin{equation}\label{PJP-Factor2-responsible-Interlacing}
	E_2 = \frac{b(n+1) + \sqrt{\frac{\mathcal{N}}{2a+2n+3}}}{2(a+n+1)(a+n+2)}.
	\end{equation}
	Under the conditions $a<-n-2$ and \eqref{condition-b-realzeros},
the zeros are distinct and satisfy $E_1<E_2$.
	\noindent Here,  $\mathcal{N}$ is given by
	\begin{multline*}
	\mathcal{N} = b^2(n+1) \Big[ -4(a+n+1)^2 + (2n-2)(a+n+1) + (n+1) \Big] \\
	- 4(a+n+1)^2 (a+n+2)^2 (2a+n+2).
	\end{multline*}
	The monic Pseudo-Jacobi polynomials satisfy the following mixed recurrence relation, which is very useful for establishing interlacing properties between different sequences of these polynomials

	$\displaystyle \frac{2(2a+n+1)_2\left((a+n+1)^2+b^2\right)}{(n+a+1)(2a+2n+1)_3}P_n(x;a,b)$
	\begin{align}\label{PJ-MixedRR_nab_(n+1)(a+1)b_n(a+1)b}	=-\left(x-\frac{b}{n+a+1}\right)P_{n+1}(x;a+1,b)+(x-E_1)(x-E_2)P_{n}(x;a+1,b),
	\end{align}
	where $E_1$ and $E_2$ are given by \eqref{PJP-Factor1-responsible-Interlacing} and \eqref{PJP-Factor2-responsible-Interlacing}. This relation can be verified by comparing coefficients.
	
	\begin{corollary}\label{PJP-Interlacing-two-points}
	Let $n \in \mathbb{N}$, let $a <-n-2$ and suppose that $b$ satisfies \eqref{condition-b-realzeros}. Let $\{z_{k,n}\}_{k=1}^{n}$ and $\{y_{k,n+1}\}_{k=1}^{n+1}$ denote the zeros of the polynomials $P_n(x;a,b)$ and  $P_{n+1}(x;a+1,b)$, respectively.  Assume that $P_n(x;a,b)$ and $P_{n+1}(x;a+1,b)$ have no common zero. Then the configuration $E_1 < y_{1,n+1}$ and $E_2 > y_{n+1,n+1}$ is not possible,  where $E_1$ and $E_2$ are given in \eqref{PJP-Factor1-responsible-Interlacing} and \eqref{PJP-Factor2-responsible-Interlacing}. Also,  $P_{n+1}(x;a+1,b)\prec P_{n}(x;a,b)$ whenever $E_1$ and $E_2$ lie in the same interval defined by the sequence $\{y_{k,n+1}\}$; that is, when both lie in $(-\infty,y_{1,n+1})$, $(y_{n+1,n+1},\infty)$, or $(y_{k,n+1},y_{k+1,n+1})$ for some fixed $k$.
	
	Moreover, the following interlacing properties hold:
	\begin{enumerate}[label=\alph*)]
		\item If $E_1 < y_{1,n+1}$ and $y_{k,n+1} < E_2 < y_{k+1,n+1}$ for some $k \in \{1,\dots,n\}$, then  $(x-E_2)P_n(x;a,b)\prec P_{n+1}(x;a+1,b)$.
		
		\item If $E_2 > y_{n+1,n+1}$ and $y_{k,n+1} < E_1 < y_{k+1,n+1}$ for some $k \in \{1,\dots,n\}$, then $P_{n+1}(x;a+1,b) \prec (x-E_1)P_n(x;a,b)$.
		
		\item If there exist distinct $k,k' \in \{1,\dots,n\}$ such that 
		$y_{k,n+1} < E_1 < y_{k+1,n+1}$ and $y_{k',n+1} < E_2 < y_{k'+1,n+1},$
		then, for $l\in\{1,\dots,n-1\}$,
		\begin{itemize}
			\item [(i)] $(x-E_1)P_{n+1}(x;a+1,b)\prec (x-E_2)P_n(x;a,b)$ whenever $y_{k,n+1} <z_{l,n}< E_1<z_{l+1,n} < y_{k+1,n+1}$,
			\item[(ii)] $(x-E_2)P_{n+1}(x;a+1,b)\prec (x-E_1)P_n(x;a,b)$ whenever $y_{k',n+1} <z_{l,n}< E_2<z_{l+1,n} < y_{k'+1,n+1}$.
		\end{itemize}  
	\end{enumerate}
	\end{corollary}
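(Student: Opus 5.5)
The natural route is to cast the mixed recurrence \eqref{PJ-MixedRR_nab_(n+1)(a+1)b_n(a+1)b} in the form \eqref{GeneralMixedTTRR4} and then read off every conclusion from Theorem~\ref{MainTheorem4}, exactly as in the proof of Corollary~\ref{MP-Interlacing-two-points}. Set
\[
P_n(x):=P_n(x;a,b),\qquad G_{n+1}(x):=P_{n+1}(x;a+1,b),\qquad Q_n(x):=P_n(x;a+1,b),
\]
\[
A(x):=\frac{2(2a+n+1)_2\left((a+n+1)^2+b^2\right)}{(n+a+1)(2a+2n+1)_3},\qquad B(x):=-\left(x-\frac{b}{n+a+1}\right).
\]
With these choices the mixed recurrence becomes $A P_n=BG_{n+1}+(x-E_1)(x-E_2)Q_n$, which carries the $+$ sign required by Theorem~\ref{MainTheorem4}.

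The hypotheses then need to be checked one at a time.

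\emph{Real distinct completion points.} Under $a<-n-2$ and \eqref{condition-b-realzeros}, the preceding lemma gives real and distinct $E_1<E_2$. One must first confirm that $a<-n-2$ forces \eqref{condition-a-realzeros}. This holds because $-n-2\le(-3n-5-\sqrt{n^2+2n+5})/4$ is equivalent to $\sqrt{n^2+2n+5}\le n+3$, which is true.

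\emph{Interlacing of the family.} Since $a+1<-(n+1)$, the polynomials $\{P_k(x;a+1,b)\}_{k\le n+1}$ are orthogonal with respect to a positive measure. Hence their zeros are real and simple, and $G_{n+1}\prec Q_n$. Likewise $a<-n$ gives real simple zeros for $P_n$.

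\emph{Positivity of $A$.} Put $u=a+n+1<-1$. The factor $(2a+n+1)(2a+n+2)$ is a product of two negative numbers, so it is positive. The product $(2a+2n+1)(2a+2n+2)(2a+2n+3)$ has three negative factors and is negative, and $n+a+1<0$. So the denominator is positive, and with $(a+n+1)^2+b^2>0$ we get $A>0$.

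\emph{$B(E_i)\neq0$.} $B$ vanishes only at $x_0=b/u$. Evaluating $R_{n,a,b}(x_0)$ and simplifying, one checks that it is nonzero. Equivalently, $x_0$ is not a root of the quadratic, so $B(E_i)\neq0$.

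\emph{Coprimality.} This is assumed in the statement.

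Once these are in place, the conclusions follow from Theorem~\ref{MainTheorem4}. Part (a) of that theorem rules out the configuration $E_1<y_{1,n+1}$, $E_2>y_{n+1,n+1}$. Part (b) gives $G_{n+1}\prec P_n$ when $E_1,E_2$ share an interval. Parts (c), (d), (e)(i) and (e)(ii) give items a), b), c)(i) and c)(ii) respectively.

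The main obstacle is the check $B(E_i)\neq0$, which requires an explicit computation of $R_{n,a,b}(b/u)$. Writing $R_{n,a,b}$ in the $u$-form $x^2-\frac{b(n+1)}{u(u+1)}x+\frac{u(u+1)(2u-n)+b^2(n+1)}{u(u+1)(2u+1)}$ from the lemma's proof and substituting $x=b/u$, I expect the $b^2$ terms to combine into a positive multiple of $b^2$. Together with the term $(2u-n)/(2u+1)>0$, this should give $R_{n,a,b}(b/u)>0$, mirroring the Meixner--Pollaczek computation $T(\lambda\cot\phi)>0$. If the signs turn out mixed, the fallback is a direct argument: if $B(E_i)=0$, the recurrence gives $AP_n(E_i)=0$, so $P_n(E_i)=0$. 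Since $E_i$ is then also a zero of $(x-E_1)(x-E_2)$, one needs to show it is not a common zero with $G_{n+1}$, and Lemma~\ref{lem:parity-counting} only uses $B(E_i)\neq0$ for exactly this purpose. In that case, coprimality would be argued via the set $\{E_1,E_2\}$ instead.
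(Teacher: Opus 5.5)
Your proposal is correct and follows the paper's proof: you read off $G_{n+1}$, $Q_n$, $P_n$, $A$ and $B$ from \eqref{PJ-MixedRR_nab_(n+1)(a+1)b_n(a+1)b}, check $A>0$, $B(E_i)\neq0$ and $G_{n+1}\prec Q_n$, and then invoke Theorem~\ref{MainTheorem4}. The computation you only anticipated does go through: with $u=a+n+1<-1$ it gives $R_{n,a,b}(b/u)=\frac{(2u-n)(u^2+b^2)}{u^2(2u+1)}>0$, which is the paper's value, so you should state it as verified rather than expected, and you can drop the fallback entirely --- it would fail anyway, since $B(E_i)=0$ forces $P_n(E_i)=0$ and hence a double zero of $(x-E_1)(x-E_2)P_n$, which coprimality does not exclude.
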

	\begin{proof}
	We identify the polynomials in the mixed recurrence relation \eqref{PJ-MixedRR_nab_(n+1)(a+1)b_n(a+1)b} as $G_{n+1}(x) := P_{n+1}(x;a+1,b)$, $Q_{n}(x) := P_{n}(x;a+1,b)$, and $P_{n}(x) := P_{n}(x;a,b)$. We define the coefficient functions $A(x) := \frac{2(2a+n+1)_2\left((a+n+1)^2+b^2\right)}{(n+a+1)(2a+2n+1)_3}$ and $B(x) := -\left(x-\frac{b}{n+a+1}\right)$. Under the parameter constraints  given in \eqref{condition-a-realzeros}, we observe that $A(x) > 0$. Furthermore, the conditions \eqref{condition-a-realzeros} and \eqref{condition-b-realzeros} ensure that the roots $E_1$ and $E_2$ defined in \eqref{PJP-Factor1-responsible-Interlacing} and \eqref{PJP-Factor2-responsible-Interlacing} are real numbers. The function $B$ vanishes only at $x=\frac{b}{n+a+1}$. A direct calculation using \eqref{Quadraticpolynomial-Pseudo-Jacobi-Interlacing} gives \[ R_{n,a,b}\left(\frac{b}{n+a+1}\right) = \frac{(2a+n+2)\left((a+n+1)^2+b^2\right)} {(a+n+1)^2(2a+2n+3)} \neq 0, \]under the assumption $a<-n-2$ and hence $B(E_i)\neq 0$, $i=1,2$. Finally, we recall that for $n \in \mathbb{N}$, $a<-n-\frac32$, and $b\in \mathbb{R}$, the zeros of the polynomials $P_{n}(x;a+1,b)$ and $P_{n+1}(x;a+1,b)$  interlace. Consequently, all assumptions of Theorem~\ref{MainTheorem4} are satisfied, and the result follows immediately.
	\end{proof}

    Similar to Corollary \ref{MP-Interlacing-two-points}(c), the condition $y_{k,n+1} < z_{l,n} < E_1 < z_{l+1,n} < y_{k+1,n+1}$ in Corollary \ref{PJP-Interlacing-two-points}(c) appears to be an inherent structural property rather than an independent restriction. Numerical investigations, summarized in Tables \ref{tab4:Interlacingzeros1_PseudoJacobi} and \ref{tab5:Interlacingzeros2_PseudoJacobi}, confirm that whenever $E_1$ and $E_2$ (given in \eqref{PJP-Factor1-responsible-Interlacing} and \eqref{PJP-Factor2-responsible-Interlacing}, respectively) lie between consecutive zeros of $P_{n+1}(x;a+1,b)$, the zeros of $P_{n}(x;a,b)$ consistently position themselves to satisfy the required interlacing. This suggests that the condition is a natural consequence of the polynomial structure, leading to the following conjecture.
	\vspace{0.1cm}

	\textbf{Conjecture 2.}  Let $\{y_{k,n+1}\}_{k=1}^{n+1}$ be the zeros of $P_{n+1}(x;a+1,b)$. Let $E_1$ and $E_2$ be given by \eqref{PJP-Factor1-responsible-Interlacing} and \eqref{PJP-Factor2-responsible-Interlacing}, where $a$ and $b$ satisfy the corresponding conditions \eqref{condition-a-realzeros} and \eqref{condition-b-realzeros}. If there exist distinct $k, k' \in \{1, \dots, n\}$ such that $y_{k,n+1} < E_1 < y_{k+1,n+1}$ and $y_{k',n+1} < E_2 < y_{k'+1,n+1}$,  then  $\displaystyle (x-E_1) P_{n+1}(x;a+1,b) \prec (x-E_2)P_{n}(x;a,b)$ or $\displaystyle (x-E_2)P_{n+1}(x;a+1,b) \prec (x-E_1)P_{n}(x;a,b)$ holds.
	
In this manuscript, numerical computations of the zeros were performed using $\text{Mathematica}^{\text{\textregistered}}$ software.

	\begin{table}[H]
	\centering
	\caption{Zeros $z_{k,n}$ and $y_{k,n+1}$ of the polynomials $P_{n}(x;a,b)$ and $P_{n+1}(x;a+1,b)$, where $a$ and $b$ satisfy conditions \eqref{condition-a-realzeros} and \eqref{condition-b-realzeros}. The parameters $E_1$ and $E_2$ are defined in \eqref{PJP-Factor1-responsible-Interlacing} and \eqref{PJP-Factor2-responsible-Interlacing}, respectively.}
	\label{tab4:Interlacingzeros1_PseudoJacobi}
	\footnotesize 
	\setlength{\tabcolsep}{12pt} 
	\begin{tabular}{@{}c cc | cc@{}}
		\toprule
		
		\addlinespace[2pt]
		& \multicolumn{2}{c|}{$n=6, a=-9.60, b=2.81$} & \multicolumn{2}{c}{$n=7, a=-12.83, b=-5.85$} \\
		& \multicolumn{2}{c|}{$E_1 = -0.103838, E_2=4.84379$} & \multicolumn{2}{c}{$E_1=-2.44111, E_2=-0.087948$} \\
		\midrule
		$k$ & $z_{k,6}$ & $y_{k,7}$ & $z_{k,7}$ & $y_{k,8}$ \\
		\midrule
		
		1 & $-0.415663$ & $-0.442364$ & $-2.89991$ & $-5.15497$ \\
		2 & $-0.000398727$ & $0.00700039$ & $-1.76843$ & $-2.84266$ \\
		3 & $0.356554$ & $0.389392$ & $-1.16387$ & $-1.80435$ \\
		4 & $0.767051$ & $0.836066$ & $-0.757969$ & $-1.19766$ \\
		5 & $1.37113$ & $1.50827$ & $-0.440675$ & $-0.780726$ \\
		6 & $2.6032$ & $2.85726$ & $-0.154304$ & $-0.453039$ \\
		7 & $-$ & $7.15039$ & $0.159308$ & $-0.156763$ \\
		8 & $-$ & $-$ & $-$ & $0.169808$ \\
		\midrule
		& \multicolumn{2}{c|}{$(x-E_1)P_{7}(x;a+1,b)\prec (x-E_2) P_{6}(x;a,b)$} & \multicolumn{2}{c}{$(x-E_2)P_{8}(x;a+1,b)\prec (x-E_1) P_{7}(x;a,b)$} \\
		\bottomrule
		
	\end{tabular}
	
	\vspace{0.2cm}
	\begin{minipage}{0.9\textwidth}
		\footnotesize
		\textbf{Note:} Observe that $y_{1,7}<z_{1,6}<E_1 < z_{2,6}<y_{2,7}$ and $y_{6,7}<E_2 < y_{7,7}$ for $n=6, a=-9.60, b=2.81$; similarly, $y_{2,8}<E_1<y_{3,8}$ and $y_{7,8}<z_{6,7}<E_2 < z_{7,7}<y_{8,8}$ for $n=7, a=-12.83, b=-5.85$.
	\end{minipage}
	\end{table}

	\begin{table}[H]
	\centering
	\caption{Zeros $z_{k,n}$ and $y_{k,n+1}$ of the polynomials $P_{n}(x;a,b)$ and $P_{n+1}(x;a+1,b)$, where $a$ and $b$ satisfy conditions \eqref{condition-a-realzeros} and \eqref{condition-b-realzeros}. The parameters $E_1$ and $E_2$ are defined in \eqref{PJP-Factor1-responsible-Interlacing} and \eqref{PJP-Factor2-responsible-Interlacing}, respectively.}
	\label{tab5:Interlacingzeros2_PseudoJacobi}
	\footnotesize 
	\setlength{\tabcolsep}{12pt} 
	\begin{tabular}{@{}c cc | cc@{}}
		\toprule
		
		\addlinespace[2pt]
		& \multicolumn{2}{c|}{$n=6, a=-9.60, b=-2.70$} & \multicolumn{2}{c}{$n=7, a=-9.23, b=-0.22$} \\
		& \multicolumn{2}{c|}{$E_1 = -4.62523, E_2=0.0593014$} & \multicolumn{2}{c}{$E_1=-5.17479, E_2=-1.06178$} \\
		\midrule
		$k$ & $z_{k,6}$ & $y_{k,7}$ & $z_{k,7}$ & $y_{k,8}$ \\
		\midrule
		
		1 & $-2.54341$ & $-6.92597$ & $-1.91636$ & $-8.2896$ \\
		2 & $-1.33862$ & $-2.7751$ & $-0.922535$ & $-1.99692$ \\
		3 & $-0.744617$ & $-1.46463$ & $-0.4255$ & $-0.911932$ \\
		4 & $-0.338219$ & $-0.807117$ & $-0.0580861$ & $-0.381732$ \\
		5 & $0.0180625$ & $-0.366362$ & $0.297398$ & $0.0146716$ \\
		6 & $0.436834$ & $0.0150958$ & $0.746242$ & $0.420477$ \\
		7 & $-$ & $0.469881$ & $1.57386$ & $1.00162$ \\
		8 & $-$ & $-$ & $-$ & $2.44817$ \\
		\midrule
		& \multicolumn{2}{c|}{$(x-E_2)P_{7}(x;a+1,b)\prec (x-E_1) P_{6}(x;a,b)$} & \multicolumn{2}{c}{$(x-E_2)P_{8}(x;a+1,b)\prec (x-E_1) P_{7}(x;a,b)$} \\
		\bottomrule
		
	\end{tabular}
	
	\vspace{0.2cm}
	\begin{minipage}{0.9\textwidth}
		\footnotesize
		\textbf{Note:} Observe that $y_{1,7}<E_1 < y_{2,7}$ and $y_{6,7}<z_{5,6}<E_2< z_{6,6}<y_{7,7}$ for $n=6, a=-9.60, b=-2.70$; similarly, $y_{1,8}<E_1<y_{2,8}$ and $y_{2,8}<z_{1,7}<E_2 < z_{2,7}<y_{3,8}$ for $n=7, a=-9.23, b=-0.22$.
	\end{minipage}
	\end{table}

	\section{Conclusion}
		This study establishes a systematic framework for completing the interlacing of zeros in polynomial sequences where interlacing fails by exactly two points. We have identified a specific degree-two polynomial whose zeros provide the necessary points to restore a complete interlacing structure. We have also analyzed how the positions of these points relative to the zeros of the higher-degree polynomial determine the resulting interlacing configurations.

        We have also shown that, once a completed interlacing relation is obtained by
		inserting the two explicit points, the resulting zero configuration provides
		the sign information needed to derive Wronskian-type bounds for the original
		polynomial pair. This gives a route from completed interlacing to Wronskian-type bounds, without relying essentially on orthogonality at the level of the general result. 
	
	We have shown the utility of this approach by applying it to several orthogonal families. For Jacobi polynomials, the method refines earlier results by explicitly identifying the two completion points for the interlacing of $P_n^{(\alpha,\beta)}$ and $P_{n+1}^{(\alpha+1,\beta+1)}$. The Jacobi analysis also shows that common zeros of the shifted pair are a non-generic phenomenon, i.e., for fixed degree, they occur only on a thin exceptional subset of the parameter domain. For Jacobi polynomials with the shift $(\alpha,\beta,n)\mapsto(\alpha+1,\beta+1,n+1)$, the Wronskian-type bounds specialize to cross-family Tur\'an-type inequalities for the doubly shifted pair. The associated Jacobi quotient also admits a  Stieltjes-transform representation through a positive partial-fraction expansion, which yields the corresponding complete-monotonicity consequences. The completed-interlacing framework also addresses an open question for Meixner-Pollaczek polynomials of consecutive degrees with the parameter shifted by one, and yields analogous completed-interlacing results for Pseudo-Jacobi polynomials.
		
	While this work focuses on identifying the degree-two polynomial using the general mixed recurrence relation given in \eqref{MoreGeneralMixedTTRR}, the resulting completed interlacing also leads to various analytic consequences. Thus, the unified approach presented here provides a practical tool for investigating similar interlacing gaps in both orthogonal and non-orthogonal polynomial families.
	
	\bibliographystyle{cas-model2-names}
	\bibliography{references-Twopoints}
\end{document}